\newcommand{\mb}{\mathbf}
\newcommand{\mc}{\mathcal}
\renewcommand{\Re}{\mathrm{Re}}
\newtheorem{lemma}{Lemma}[section]
\newtheorem{proposition}[lemma]{Proposition}
\newtheorem{theorem}[lemma]{Theorem}
\theoremstyle{remark}
\theoremstyle{definition}
\newtheorem{definition}[lemma]{Definition}
\title{On stable self--similar blow up for equivariant wave maps}
\author{Roland Donninger}
\address{University of Chicago, Department of Mathematics,
5734 South University Avenue, Chicago, IL 60637, U.S.A.}
\email{donninger@uchicago.edu}
\thanks{The author 
is an Erwin Schr\"odinger Fellow of the 
FWF (Austrian Science Fund) Project No. J2843 and he wants to thank Wilhelm Schlag for many interesting discussions.}
\begin{document}

\begin{abstract}
We consider co--rotational wave maps from ($3+1$) Minkowski space into the three--sphere. This is an energy supercritical model which is known to exhibit finite time blow up via self--similar solutions. The ground state self--similar solution $f_0$ is known in closed form and based on numerics, it is supposed to describe the generic blow up behavior of the system. We prove that the blow up via $f_0$ is stable under the assumption that $f_0$ does not have unstable modes.
This condition is equivalent to a spectral assumption for a linear second order ordinary differential operator.
In other words, we reduce the problem of stable blow up to a linear ODE spectral problem.
Although we are unable, at the moment, to verify the mode stability of $f_0$ rigorously, it is known that possible unstable eigenvalues are confined to a certain compact region in the complex plane.
As a consequence, highly reliable numerical techniques can be applied and all available results strongly suggest the nonexistence of unstable modes, i.e., the assumed mode stability of $f_0$.
\end{abstract}

\maketitle

\section{Introduction}
Wave maps are (formally) critical points of the action functional
$$ S(u)=\int_M \mathrm{tr}_g (u^*h) $$
for a map $u: M \to N$ where $(M,g)$ and $(N,h)$ are Lorentzian and Riemannian manifolds, respectively.
Here, $\mathrm{tr}_g (u^*h)$ is the trace (with respect to $g$) of the pullback metric $u^*h$.
The associated Euler--Lagrange equations in a local coordinate system $(x^\mu)$ on $M$ read 
\begin{equation}
\label{eq:wm}
\Box_g u^a(x)+g^{\mu \nu}(x)\Gamma^a_{bc}(u(x))\partial_\mu u^b(x) \partial_\nu u^c(x)=0
\end{equation}
where Einstein's summation convention is assumed and $\Gamma^a_{bc}$ are the Christoffel symbols on $N$.
The system \eqref{eq:wm} is called the wave maps equation (in the intrinsic form).
In what follows we choose the base manifold $M$ to be Minkowski space.
Due to their geometric origin, wave maps are appealing models in nonlinear field theory which naturally generalize the linear wave equation.
They are relevant in various areas of physics but still simple enough to be accessible for rigorous mathematical analysis. 

Basic questions for a nonlinear time evolution equation are: Do there exist global (that is, for all times) solutions for all data or is it possible to specify initial data that lead to a breakdown of the solution in finite time?
If the latter is true, then how does the breakdown (blow up) occur?
There exists a heuristic principle which gives a hint for scaling invariant equations that possess a positive energy (such as the wave maps equation, see \cite{DSA}).
If the scaling behavior is such that shrinking of the solution is energetically favorable, then one expects finite time blow up.
Conversely, if shrinking to ever smaller scales is energetically forbidden, then global existence is anticipated.
These two cases are called energy supercritical and energy subcritical, respectively.
There is also a borderline situation where the energy itself is scaling invariant which is called energy critical.
For wave maps, the criticality class is linked to the spatial dimension of the base manifold $M$.
The equation is energy subcritical, critical or supercritical if $\dim M=1+1$, $\dim M=2+1$ or $\dim M \geq 3+1$, respectively.
It is fair to say that our current understanding of large data problems is confined to energy subcritical and critical equations.
However, this is a very unsatisfactory situation since many problems in physics turn out to be energy supercritical.

The initial value problem for the wave maps equation has attracted a lot of interest by the mathematical community in the past two decades which led to the development of new sophisticated tools that tremendously improved our understanding of nonlinear wave equations.
In particular the case where the base manifold $M$ is assumed to be Minkowski space (which we shall do 
throughout) 
has been studied thoroughly.
Since it is impossible for us to do justice to the huge amount of publications on the subject, we have to restrict ourselves to a certain selection of some important contributions. 
A considerable amount of the literature is devoted to problems with certain symmetry properties, such as radial or equivariant maps. We mention for instance \cite{Chr1}, \cite{Chr2}, \cite{shatah88}, \cite{STZ94},
\cite{struwe04a}, \cite{struwe04b} where various fundamental aspects like local and global well--posedness, asymptotic behavior or blow up are studied.
At the end of the 1990s new techniques were developed, capable of treating the full system without symmetry assumptions.
A main objective in this respect was to prove small data global existence for various target manifolds and space
dimensions, see e.g.,
\cite{struwe01}, \cite{keel-tao}, \cite{struwe99}, \cite{tataru99}, \cite{tataru01}, \cite{tao01a}, \cite{tao01b}, \cite{klainerman-rodnianski02}, 
\cite{shatah-struwe02}, \cite{tataru05}, \cite{krieger03}, \cite{krieger04}, \cite{nahmod02}, \cite{nahmod03}. 
On the other hand, for large data and in the energy critical case, there are newer results on blow up, e.g., 
\cite{struwe03}, \cite{KST08}, 
\cite{rodnianski-sterbenz06}, \cite{rodnianski-raphael09}, \cite{carstea}
and, very recently, also on global existence
\cite{krieger-schlag09}, \cite{tataru-sterbenz09a},
\cite{tataru-sterbenz09b}, \cite{tao09}. 
We will comment below in more detail on some of those works which are most relevant for us. 
Despite the already vast literature on the subject we are still only at the beginning and many questions remain open, even for highly symmetric problems.
We also refer the reader to the monograph \cite{struwe98} for a general introduction on the subject as well as the survey article \cite{kriegersurv} for an in--depth review of recent results and applications of wave maps.

In this paper we study the simplest energy supercritical case: co--rotational wave maps from $(3+1)$ Minkowski space to the three--sphere.
This model can be described by the single semilinear wave equation
\begin{equation}
\label{eq:main}
\psi_{tt}-\psi_{rr}-\frac{2}{r}\psi_r+\frac{\sin(2\psi)}{r^2}=0
\end{equation}
where $r \geq 0$ is the standard radial coordinate on Minkowski space. We refer to \cite{DSA} and references therein for more details on the underlying symmetry reduction which is a special case of equivariance.
Global well--posedness of the Cauchy problem for Eq.~\eqref{eq:main} for data which are small in a sufficiently high Sobolev space follows from \cite{sideris}.
Furthermore, a number of results concerning the Cauchy problem for equivariant wave maps are obtained in \cite{STZ94}, in particular, local well--posedness with minimal regularity requirements is studied.
Global well--posedness for data with small energy can be concluded from the much more general result in \cite{tao01b}.
On the other hand, it has been known for a long time that Eq.~\eqref{eq:main} exhibits finite time blow up in the form of self--similar solutions \cite{shatah88}, see also \cite{cazenave} for generalizations.
By definition, a self--similar solution is of the form $\psi(t,r)=f(\frac{r}{T-t})$ for a constant $T>0$ and obviously, depending on the concrete form of $f$, ``something singular'' happens as $t \to T-$.
As usual, by exploiting finite speed of propagation, a self--similar solution can be used to construct a solution with compactly supported initial data that breaks down in finite time.
In fact, Eq.~\eqref{eq:main} admits many self--similar solutions \cite{bizon00} and a particular one, henceforth denoted by $\psi^T$, is even known in closed form \cite{TS90} and given by
$$ \psi^T(t,r)=2 \arctan \left (\tfrac{r}{T-t} \right )=:f_0 \left (\tfrac{r}{T-t} \right ). $$
We call $\psi^T$ the \emph{ground state} or \emph{fundamental self--similar solution}.

We remark in passing that the self--similar blow up here is very different from singularity formation in the 
analogous model of equivariant wave maps on ($2+1$) Minkowski space.
For this energy critical problem it has been shown by Struwe \cite{struwe03} that the blow up (if it exists) takes place via shrinking of a harmonic map at a rate which is strictly faster than self--similar.
This beautiful result holds for a large class of targets and, by ruling out the existence of finite energy harmonic maps, it can be used to show global existence.
In the case of the two--sphere as a target, there do exist finite energy harmonic maps and indeed, blow up solutions for this model have been constructed in \cite{KST08}, \cite{rodnianski-sterbenz06}, \cite{rodnianski-raphael09}.

\subsection{The main result}
\label{sec:intromainresult}
Based on numerical investigations \cite{bizon99}, the solution $\psi^T$ is expected to be fundamental for the understanding of the dynamics of the system. 
This is due to the fact that it acts as an attractor in the sense that generic large data evolutions approach $\psi^T$ locally near the center $r=0$ as $t \to T-$.
Consequently, the blow up described by $\psi^T$ is expected to be stable.
In this paper we give a rigorous proof for the stability of $\psi^T$.
Our result is conditional in the sense that it depends on a certain spectral property for a linear ordinary differential equation which cannot be verified rigorously so far.
However, very reliable numerics and partial theoretical results leave no doubt that it is satisfied, see
\cite{DSA} for a thorough discussion of this issue.
In other words, our result reduces the question of (nonlinear) stability of $\psi^T$ to a linear ODE spectral problem.
At this point we should remark that some aspects of the (linear) stability problem for self--similar wave maps are studied
in \cite{ichpca2} by using a hyperbolic coordinate system adapted to self--similarity.
In particular stability properties of excited self--similar solutions are derived in \cite{ichpca2}.
However, as discussed in \cite{ichpca2}, the hyperbolic coordinate system is not suitable for proving 
stability of $\psi^T$. 

In order to formulate our main theorem, we need a few preparations.
Based on the known numerical results, one expects convergence to the self--similar attractor $\psi^T$ only in
 $$ \mc{C}_T:=\{(t,r): t\in (0,T), r \in [0,T-t]\}, $$
the backward lightcone of the blow up point.
Consequently, we study the Cauchy problem
\begin{equation}
 \label{eq:maincauchy}
\left \{ \begin{array}{l}
\psi_{tt}(t,r)-\psi_{rr}(t,r)-\frac{2}{r}\psi_r(t,r)+\frac{\sin(2\psi(t,r))}{r^2}=0 \mbox{ for } (t,r) \in \mc{C}_T \\
\psi(0,r)=f(r), \psi_t(0,r)=g(r) \mbox{ for }r \in [0,T]
         \end{array} \right .
\end{equation}
with given initial data $(f,g)$.
Our result applies to small perturbations $(f,g)$ of $(\psi^T(0,\cdot), \psi^T_t(0, \cdot))$, i.e., we view the nonlinear problem as a perturbation of the linearization of \eqref{eq:maincauchy} around $\psi^T$.
Writing 
\begin{equation} 
\label{eq:nonlinearity}
\sin(2(\psi^T+\varphi))=\sin(2\psi^T)+2\cos(2\psi^T)\varphi+N_T(\varphi) 
\end{equation}
with $N_T(\varphi)=O(\varphi^2)$ (if $\varphi$ is small), we obtain the equation
\begin{equation} 
\label{eq:mainlin}
\varphi_{tt}-\varphi_{rr}-\frac{2}{r}\varphi_r+\frac{2}{r^2}\varphi+\frac{2\cos(2\psi^T)-2}{r^2}\varphi+\frac{N_T(\varphi)}{r^2}=0 
\end{equation}
for perturbations $\varphi$ of $\psi^T$.
Note that the ``potential term'' in Eq.~\eqref{eq:mainlin} is time--dependent since $\psi^T$ is self--similar.
Consequently, it is convenient to remove this time dependence by switching to similarity coordinates $\tau:=-\log(T-t)$ and $\rho:=\frac{r}{T-t}$.
This transforms the lightcone $\mc{C}_T$ to an infinite cylinder and the blow up point is shifted towards $\infty$.
Thus, we obtain an asymptotic stability problem which is explicitly given by
\begin{equation}
\label{eq:cssscalar}
\phi_{\tau \tau}+\phi_\tau+2\rho \phi_{\tau \rho}-(1-\rho^2)\phi_{\rho \rho}-2\frac{1-\rho^2}{\rho}\phi_\rho+\frac{V(\rho)}{\rho^2}\phi+\frac{N_T(\phi)}{\rho^2}=0 
\end{equation}
where $\phi(\tau,\rho)=\varphi(T-e^{-\tau}, e^{-\tau}\rho)$
and the ``potential'' $V$ reads
\begin{equation} 
\label{eq:V}
V(\rho)=2 \cos(4 \arctan(\rho))=\frac{2(1-6 \rho^2+\rho^4)}{(1+\rho^2)^2}. 
\end{equation}
The relevant coordinate domain is $\tau \geq -\log T$ and $\rho \in [0,1]$.
The corresponding \emph{linearized problem} is simply obtained by ignoring the nonlinear term, i.e., 
\begin{equation}
\label{eq:linearcssscalar}
\phi_{\tau \tau}+\phi_\tau+2\rho \phi_{\tau \rho}-(1-\rho^2)\phi_{\rho \rho}-2\frac{1-\rho^2}{\rho}\phi_\rho+\frac{V(\rho)}{\rho^2}\phi=0. 
\end{equation}
Inserting the mode ansatz $\phi(\tau,\rho)=e^{\lambda \tau}u_\lambda(\rho)$ into Eq.~\eqref{eq:linearcssscalar}, we arrive at the aforementioned linear ODE spectral problem
\begin{equation}
\label{eq:evodeintro}
-(1-\rho^2)u_\lambda''(\rho)-2\frac{1-\rho^2}{\rho}u_\lambda'(\rho)+2\lambda \rho u_\lambda'(\rho)+\lambda (\lambda+1)u_\lambda(\rho)+\frac{V(\rho)}{\rho^2}u_\lambda(\rho)=0
\end{equation}
for the function $u_\lambda$.
We say that $\lambda$ is an eigenvalue if Eq.~\eqref{eq:evodeintro} has a solution $u_\lambda \in C^\infty[0,1]$. It is a consequence of \cite{DSA} that only smooth solutions are relevant here.
Furthermore, the eigenvalue is said to be unstable if $\mathrm{Re}\lambda \geq 0$ and stable if $\mathrm{Re}\lambda<0$.
It can be immediately checked that $\lambda=1$ is an (unstable) eigenvalue with 
$u_1(\rho)=\frac{\rho}{1+\rho^2}$.
However, it turns out that this instability is an artefact of the similarity coordinates and it does not correspond to a ``real'' instability of the solution $\psi^T$.
In fact, it is a manifestation of the time translation invariance of the wave maps equation.
Consequently, we say that $\psi^T$ is \emph{mode stable} if $\lambda=1$ is the only unstable eigenvalue.
The mode stability of $\psi^T$, which we shall assume here, has been verified numerically using various independent techniques, see \cite{bizon99}, \cite{ichdipl}, \cite{ich0} and \cite{bizon05}.
Furthermore, in \cite{ichpca2} it is rigorously proved that $\lambda=1$ is the only eigenvalue with $\mathrm{Re}\lambda
\geq 1$ and in \cite{ichpca1} we show that there do not exist \emph{real} unstable eigenvalues (except $\lambda=1$).
Finally, in \cite{DSA} it is shown that $\lambda=1$ is the only eigenvalue with real part greater than $\frac12$.
All these results leave no doubt that $\psi^T$ is indeed mode stable although a completely 
rigorous proof of this property is still not available.
At this point we also mention that according to numerics \cite{bizon99}, \cite{ichdipl}, \cite{ich0} and \cite{bizon05}, the first stable eigenvalue is $\approx -0.54$. This is important since it \emph{dictates
the rate of convergence} to the self--similar solution, see Theorem \ref{thm:main} below.

For pairs of functions $(f,g) \in C^2[0,R] \times C^1[0,R]$, $R>0$, that satisfy the boundary condition $f(0)=g(0)=0$ we introduce a norm $\|\cdot\|_{\mc{E}(R)}$ by setting
$$ \|(f,g)\|_{\mc{E}(R)}^2:=
\int_0^R |r f''(r)+3 f'(r)|^2 dr+\int_0^R |r g'(r)+2 g(r)|^2 dr. $$
Observe that $\|\cdot\|_{\mc{E}(R)}$ is indeed a norm since $r f''(r)+3f'(r)=0$ implies 
$f(r)=c_1+\frac{c_2}{r^2}$ for constants $c_1, c_2$ but this function does not belong to $C^2[0,R]$ unless $c_2=0$ and it does not satisfy the boundary condition $f(0)=0$ unless $c_1=0$.
A similar statement is true for $g$.
The motivation for the choice of the norm $\|\cdot\|_{\mc{E}(R)}$ is two--fold.
First, it is derived from a conserved quantity of the \emph{free equation}
\begin{equation} 
\label{eq:free}
\varphi_{tt}-\varphi_{rr}-\frac{2}{r}\varphi_r+\frac{2}{r^2}\varphi=0 
\end{equation}
which is obtained from Eq.~\eqref{eq:mainlin} by dropping the regularized ``potential'' and the nonlinearity.
More precisely, for any sufficiently regular solution $\varphi$ of Eq.~\eqref{eq:free}, the function $t \mapsto \|(\varphi(t,\cdot),\varphi_t(t,\cdot))\|_{\mc{E}(\infty)}$ is a constant.
This can be seen as follows.
Suppose $\varphi$ is a (sufficiently smooth) solution of Eq.~\eqref{eq:free} and set
\begin{equation}
\label{eq:phihat} 
\hat{\varphi}(t,r):=r\varphi_r(t,r)+2\varphi(t,r). 
\end{equation}
It is important to note here that this transformation is invertible.
Indeed, we have
$$ r\hat{\varphi}(t,r)=\partial_r (r^2 \varphi(t,r)) $$
and this necessarily implies
$$ \varphi(t,r)=\frac{1}{r^2}\int_0^r r' \hat{\varphi}(t,r')dr' $$
since we assume regularity of $\varphi$ at the origin.
Now note that 
\begin{align*}
\hat{\varphi}_{tt}-\hat{\varphi}_{rr}&=\tfrac{1}{r}\partial_r(r^2 \varphi_{tt})-r\varphi_{rrr}
-4\varphi_{rr} \\
&=\tfrac{1}{r}\partial_r r^2 \left [\varphi_{tt}-\varphi_{rr}-\tfrac{2}{r}\varphi_r
+\tfrac{2}{r^2}\varphi \right ]=0
\end{align*}
and thus, $\hat{\varphi}$ satisfies the one--dimensional wave equation on the half--line $r \geq 0$.
Furthermore, since $\varphi(t,0)=0$ by regularity, we have $\hat{\varphi}(t,0)=0$ for all $t$ and thus, 
$$ \int_0^\infty \left [\hat{\varphi}_t^2(t,r)+\hat{\varphi}_r^2(t,r) \right ]dr
=\|(\varphi(t,\cdot), \varphi_t(t,\cdot))\|_{\mc{E}(\infty)}^2 $$
is independent of $t$.
Consequently, $\|\cdot\|_{\mc{E}(R)}$ is a local ``higher energy norm'' for the free equation \eqref{eq:free} since it requires one more derivative than the energy.
The point of requiring more derivatives is that one can ``see'' self--similar blow up in this norm, which is the second important feature of $\|\cdot\|_{\mc{E}(R)}$. 
Explicitly, we have
\begin{align*} 
\|(\psi^T(t,\cdot),\psi_t^T(t,\cdot))\|_{\mc{E}(T-t)}^2&=\int_0^{T-t}|r\psi_{rr}^T(t,r)+3\psi_r^T(t,r)|^2 dr
\\
&\quad +\int_0^{T-t}|r\psi_{tr}^T(t,r)+2\psi_t^T(t,r)|^2 dr \\
&=\int_0^{T-t} \left |\tfrac{r}{(T-t)^2}f_0''\left (\tfrac{r}{T-t} \right )+\tfrac{3}{T-t}f_0'\left (\tfrac{r}{T-t} \right ) \right |^2 dr \\
&\quad +\int_0^{T-t}\left |\tfrac{r^2}{(T-t)^3}f_0''\left (\tfrac{r}{T-t} \right )
+\tfrac{3r}{(T-t)^2}f_0'\left (\tfrac{r}{T-t} \right ) \right |^2 dr \\
&=\frac{1}{T-t}\left [\int_0^1 |\rho f_0''(\rho)+3f_0'(\rho)|^2 d\rho+\int_0^1 |\rho^2 f_0''(\rho)+3 \rho f_0'(\rho)|^2 d\rho \right ],
\end{align*}
and thus,
\begin{equation}
\label{eq:blowup} 
\|(\psi^T(t,\cdot),\psi_t^T(t,\cdot))\|_{\mc{E}(T-t)}=C(T-t)^{-\frac{1}{2}} 
\end{equation}
for a constant $C>0$ which shows that the norm of $\psi^T$ blows up.
Note carefully that this is in stark contrast to the behavior of the energy norm. 
The local energy of $\psi^T$ in the cone $\mc{C}_T$ \emph{decays} like $(T-t)$ as $t \to T-$ and thus, the blow up is ``invisible'' in the energy norm. 
This is a manifestation of the energy supercritical character of the equation.
As a consequence, we have to work in a stronger topology, see also \cite{DSA} for a discussion on this issue.

Finally, for initial data $(f,g) \in C^3[0,\frac{3}{2}] \times C^2[0,\frac{3}{2}]$ with $f(0)=g(0)=0$, we define another norm $\|\cdot\|_{\mc{E}'}$ by
\begin{align*}
 \|(f,g)\|_{\mc{E}'}^2:=&\int_0^{3/2} |rf'''(r)+4f''(r)|^2 r^2 dr+\int_0^{3/2} |rf''(r)+3f'(r)|^2 dr \\
 &+\int_0^{3/2} |r^2 g''(r)+4r g'(r)+2 g(r)|^2 dr.
\end{align*}
The norm $\|\cdot\|_{\mc{E}'}$ is stronger than $\|\cdot\|_{\mc{E}(\frac{3}{2})}$ in the sense that it requires one additional derivative. 
This stronger norm is needed for certain technical reasons which will become clear below. 
Now we are ready to formulate our main result.

\begin{theorem}
 \label{thm:main}
 Assume that $\psi^T$ is mode stable and denote by $s_0$ the real part of the first 
\footnote{That is, the stable eigenvalue with the largest real part.}
stable eigenvalue.
 Let $\varepsilon>0$ be arbitrary but so small that $\omega:=\max\{-\frac{1}{2},s_0\}+\varepsilon<0$.
 Furthermore, let $(f,g) \in C^3[0,\frac{3}{2}] \times C^2[0,\frac{3}{2}]$ be initial data with $f(0)=g(0)=0$ and
 $$ \|(f,g)-(\psi^1(0,\cdot),\psi_t^1(0,\cdot))\|_{\mc{E}'}<\delta $$
 for a sufficiently small $\delta>0$.
 Then there exists a $T>0$ close to $1$ such that the Cauchy problem
 \begin{equation*}
\left \{ \begin{array}{l}
\psi_{tt}(t,r)-\psi_{rr}(t,r)-\frac{2}{r}\psi_r(t,r)+\frac{\sin(2\psi(t,r))}{r^2}=0, \quad t \in (0,T), 
r \in [0,T-t] \\
\psi(0,r)=f(r), \psi_t(0,r)=g(r), \quad r \in [0,T]
         \end{array} \right .
\end{equation*}
has a unique solution $\psi$ that satisfies
$$ \|(\psi(t,\cdot),\psi_t(t,\cdot))-(\psi^T(t,\cdot),\psi_t^T(t,\cdot))\|_{\mc{E}(T-t)} \leq C_\varepsilon
|T-t|^{-\frac{1}{2}+|\omega|} $$
for all $t \in [0,T)$ where $C_\varepsilon>0$ is a constant that depends on $\varepsilon$. 
\end{theorem}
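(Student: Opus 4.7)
The plan is to pass to the coordinates $(\tau,\rho)$ of the excerpt and rewrite the perturbation equation \eqref{eq:cssscalar} as an abstract evolution problem
$$ \Phi_\tau = L\Phi + F(\Phi), $$
where $\Phi$ is a two--component vector encoding $\phi$ and a well--chosen derivative (schematically $\phi_\tau + \rho\phi_\rho$, so that the principal part of $L$ is well--behaved at the cone boundary $\rho = 1$), $L$ is a closed operator on a Hilbert space $\mathcal{H}$ whose norm is the $(\tau,\rho)$-analogue of $\|\cdot\|_{\mathcal{E}(1)}$, and $F$ absorbs the term $N_T(\phi)/\rho^2$. The initial datum depends on both $(f,g)$ and on the unknown blow--up time $T$; the parameter $T$ will be tuned at the very end to cancel a one--dimensional artificial instability.

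\textbf{Linear semigroup and spectral decomposition.} First I would verify that $L$ generates a strongly continuous semigroup on $\mathcal{H}$, by checking dissipativity of a principal part up to a bounded shift and treating the contribution of $V(\rho)/\rho^2$ as a relatively compact perturbation. The ODE spectral problem \eqref{eq:evodeintro} then describes the point spectrum of $L$. Under the mode--stability assumption, $\sigma(L) \cap \{\mathrm{Re}\,\lambda \geq 0\}$ consists of the simple eigenvalue $\lambda = 1$ alone, with one--dimensional eigenspace spanned by a lift of $u_1(\rho) = \rho/(1+\rho^2)$. Let $P_u$ be the corresponding rank--one Riesz projection and $P_s = I - P_u$. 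A Gearhart--Pr\"uss type argument, combined with uniform resolvent bounds on the vertical line $\mathrm{Re}\,\lambda = \omega$, should give
$$ \|e^{L\tau}P_s \Phi\|_{\mathcal{H}} \leq C_\varepsilon e^{\omega\tau}\|P_s \Phi\|_{\mathcal{H}}, \qquad e^{L\tau}P_u \Phi = e^{\tau}P_u \Phi. $$

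\textbf{Lyapunov--Perron fixed point and modulation of $T$.} Let $\Phi_0(T)$ denote the pair $(f - \psi^T(0,\cdot),\, g - \psi^T_t(0,\cdot))$, pushed forward into similarity coordinates at the starting time $\tau_0 = -\log T$. I would solve the Duhamel--type equation
$$ \Phi(\tau) = e^{L\tau}P_s \Phi_0(T) + \int_0^\tau e^{L(\tau-s)}P_s F(\Phi(s))\,ds - \int_\tau^\infty e^{L(\tau-s)}P_u F(\Phi(s))\,ds $$
by Banach contraction in the weighted space $\{\Phi \in C([0,\infty);\mathcal{H}) : \sup_{\tau \geq 0} e^{|\omega|\tau}\|\Phi(\tau)\|_{\mathcal{H}} < \infty\}$, provided $\delta$ is small. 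The stronger norm $\|\cdot\|_{\mathcal{E}'}$ on the data is used exactly here, to yield Moser/Schauder--type pointwise control of the quadratic remainder $N_T(\phi)/\rho^2$ in the topology of $\mathcal{H}$. The resulting fixed point is an actual solution of the PDE only if the scalar consistency condition
$$ P_u \Phi_0(T) + \int_0^\infty e^{-Ls}P_u F(\Phi(s))\,ds = 0 $$
holds; this equation I would solve for $T$ near $1$ by the implicit function theorem, using that $\partial_T \Phi_0(T)|_{T=1}$ has nonzero component along $P_u$ (a direct manifestation of time--translation invariance, which is also the very origin of the eigenfunction $u_1$). Exponential decay $\|\Phi(\tau)\|_{\mathcal{H}} \leq C_\varepsilon \delta\, e^{-|\omega|\tau}$ then translates, via the scaling computation leading to \eqref{eq:blowup}, into the announced rate $(T-t)^{-1/2 + |\omega|}$.

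\textbf{Main obstacle.} The principal technical difficulty will be the quantitative linear step: upgrading the \emph{qualitative} mode--stability hypothesis into a resolvent bound on $\mathrm{Re}\,\lambda = \omega$ that is uniform as $|\mathrm{Im}\,\lambda| \to \infty$. Since $L$ is non--self--adjoint and its principal part degenerates at $\rho = 1$, standard self--adjoint tools are unavailable; one likely needs precise high--frequency asymptotics (WKB/Jost--type analysis) of the solutions of \eqref{eq:evodeintro} to rule out resonances accumulating in the half--plane $\mathrm{Re}\,\lambda \geq \omega$, together with a careful endpoint analysis at $\rho = 0$ and $\rho = 1$ to identify the correct notion of eigenfunction matching the $C^\infty[0,1]$ convention used in the statement.
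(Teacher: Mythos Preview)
Your outline is correct and follows essentially the same route as the paper: a first--order reformulation in similarity coordinates, the linear semigroup input (which here is imported wholesale from the companion paper \cite{DSA} as Theorem~\ref{thm:linear}, so your ``main obstacle'' is in fact delegated rather than confronted), a Lyapunov--Perron/Duhamel fixed point with a rank--one correction, and an implicit function argument in $T$ to kill that correction.

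One small misattribution worth flagging: the extra derivative in $\|\cdot\|_{\mc{E}'}$ is \emph{not} spent on the nonlinearity. The paper controls $\mb{N}:\mc{H}\to\mc{H}$ entirely within $\mc{H}$ itself (Lemmas~\ref{lem:estA1}--\ref{lem:N}), using Hardy's inequality and the integral operator $A$ to absorb the $\rho^{-2}$ in front of $N_T$. The stronger norm on the data is needed instead for the initial--data map $\mb{U}(\mb{v},T)$ to be Fr\'echet differentiable in $T$ (Lemmas~\ref{lem:Fvcont}--\ref{lem:U}): differentiating $v_j(T\rho)$ in $T$ produces $\rho v_j'(T\rho)$, and placing this back in $\mc{H}$ costs one derivative. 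This is precisely what makes the implicit function step for $T$ go through, and it is why the loss of regularity from $\mc{E}'$ to $\mc{E}$ occurs at the data stage rather than in the nonlinear iteration.
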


Several remarks are in order.
 \begin{itemize}
 \item As usual, by a ``solution'' $\psi$ we mean a function that solves the equation in an appropriate weak sense and not necessarily in the sense of classical derivatives. 
  \item According to Eq.~\eqref{eq:blowup}, one should actually normalize the estimate from Theorem \ref{thm:main} to
  $$ |T-t|^{\frac{1}{2}}\|(\psi(t,\cdot),\psi_t(t,\cdot))-(\psi^T(t,\cdot),\psi_t^T(t,\cdot))\|_{\mc{E}(T-t)} \leq C_\varepsilon
|T-t|^{|\omega|} $$
for $t \in [0,T)$ which shows that the solution $\psi$ converges to $\psi^T$ in the backward lightcone of the blow up point $(T,0)$.
Consequently, Theorem \ref{thm:main} tells us that, if we start with initial data that are sufficiently close to $(\psi^1(0,\cdot),\psi_t^1(0,\cdot))$, then the solution $\psi$ blows up in a self--similar manner via $\psi^T$.
In this sense, the blow up described by $\psi^T$ is stable.
It is clear that even very small (generic) perturbations of the initial data $(\psi^1(0, \cdot),\psi_t^1(0,\cdot))$ will change the blow up time of the solution. 
That is why one has to adjust $T$.
\item It should be emphasized that the rate of convergence to the attractor is dictated by the 
first stable eigenvalue. This complies with naive expectations and previous heuristics and numerics 
in the physics literature, e.g., \cite{bizon99}.
\item The radius $\frac{3}{2}$ in the smallness condition for the initial data
is more or less an arbitrary choice. The problem is that one actually needs 
to prescribe the data on the interval $[0,T]$
but $T$ is not known in advance --- a tautology.
However, since our argument yields a $T$ close to $1$, one may assume that $T$ is always smaller than $\frac{3}{2}$.

\item We have to require one more derivative of the initial data than we actually control in the time evolution. This is for technical reasons.

\item As usual, one does not really need classical derivatives of the data $(f,g)$ --- weak derivatives are fine too, as long as the norm $\|(f,g)\|_{\mc{E}'}$ is well--defined.
In other words, the initial data may be taken from the completion of the space 
$$ \left \{(f,g)\in C^3[0,\tfrac{3}{2}] \times C^2[0,\tfrac{3}{2}]: f(0)=g(0)=0 \right \} $$ 
with respect to the norm $\|\cdot\|_{\mc{E}'}$.
It is worth noting here that the norm $\|\cdot\|_{\mc{E}'}$ is strong enough for the boundary conditions to ``survive'', cf.~Lemma \ref{lem:Hcont} below.

\item It seems to be difficult to rigorously \emph{prove} mode stability of the ground state self--similar solution. 
However, this is a linear ODE problem and there are very reliable numerics as well as partial rigorous results that leave no doubt that $\psi^T$ is mode stable (cf.~\cite{DSA}, Sec.~3). 
Nevertheless, it would be desirable to have a proof for the mode stability and we plan to revisit this problem elsewhere.
\end{itemize}

\subsection{Outline of the proof}
The proof of Theorem \ref{thm:main} is functional analytic and the main tools we use are the linear perturbation theory developed in the companion paper \cite{DSA} as well as the implicit function theorem on Banach spaces.
The implicit function theorem is used to ``push'' the linear results from \cite{DSA} to the nonlinear level. Consequently, our approach is perturbative in the sense that the nonlinearity is viewed as a perturbation of the linear problem.

In a first step, we write the Cauchy problem in Theorem \ref{thm:main} as an ODE on a suitable Hilbert space of the form
\begin{equation}
\label{eq:maincssopintro}
 \left \{ \begin{array}{l}
\frac{d}{d \tau}\Phi(\tau)=L\Phi(\tau)+\mb{N}(\Phi(\tau)) \mbox{ for }\tau>-\log T \\
\Phi(-\log T)=\mb{U}(\mb{v},T)
          \end{array} \right .
\end{equation}
which is, in fact, an operator formulation of Eq.~\eqref{eq:cssscalar} since we are working in similarity coordinates $(\tau,\rho)$.
The field $\Phi$ describes a nonlinear perturbation of $\psi^T$.
Here, the linear operator $L$ emerges from the linearization of the equation around the fundamental self--similar solution $\psi^T$.
The properties of $L$ have been studied thoroughly in \cite{DSA}.
Furthermore, the nonlinear operator $\mb{N}$ results from the nonlinear remainder of the equation and the free data $(f,g)$ from Theorem \ref{thm:main} define the vector $\mb{v}$.
In fact, $\mb{v}$ are the data relative to $\psi^1$.
Up to some variable transformations we essentially have $\mb{v} \approx (f,g)-\psi^1[0]$ and
$$\mb{U}(\mb{v},T) \approx \mb{v}+\psi^1[0]-\psi^T[0]=(f,g)-\psi^T[0]$$ where
$\psi[0]$ is shorthand for $(\psi(0,\cdot),\psi_t(0,\cdot))$.
Consequently, the smallness condition in Theorem \ref{thm:main} ensures that $\mb{v}$ is small.
We apply Duhamel's formula and time translation in order to derive an integral equation for solutions of Eq.~\eqref{eq:maincssopintro} of the form
\begin{equation}
\label{eq:maincssopmildintro}
 \Psi(\tau)=S(\tau)\mb{U}(\mb{v},T)+\int_0^\tau S(\tau-\tau')\mb{N}(\Psi(\tau'))d\tau', \quad \tau \geq 0
\end{equation}  
where $\Psi(\tau)=\Phi(\tau-\log T)$.
Here, $S$ is the semigroup generated by $L$, i.e., the solution operator to the linearized problem.
The existence of $S$ has been proved in \cite{DSA} and, moreover, it has been shown that there exists a projection $P$ with one--dimensional range that commutes with $S(\tau)$ such that $S$ satisfies the estimates
$$ \|S(\tau)P\|\lesssim e^{\tau}, \quad \|S(\tau)(1-P)\|\lesssim e^{-|\omega|\tau} $$
for all $\tau \geq 0$ with $\omega$ from Theorem \ref{thm:main}, see Theorem \ref{thm:linear} below.
In other words, the subspace $\mathrm{rg}P$ is unstable for the linear time evolution.
In order to compensate for this instability, we first construct a solution 
to a modified equation
\begin{align*} \Psi(\tau)=&S(\tau)\mb{U}(\mb{v},T)-e^\tau P \left [\mb{U}(\mb{v},T)+\int_0^\infty e^{-\tau'}\mb{N}(\Psi(\tau'))d\tau' \right ] \\
&+\int_0^\tau S(\tau-\tau')\mb{N}(\Psi(\tau'))d\tau', \quad \tau \geq 0.
\end{align*}
The ``correction''
$$ \mb{F}(\mb{v},T):=P \left [\mb{U}(\mb{v},T)+\int_0^\infty e^{-\tau'}\mb{N}(\Psi(\tau'))d\tau' \right ] $$
suppresses the instability of the solution.
We remark that some aspects of this construction are inspired by the work of Krieger and Schlag on the critical wave equation \cite{schlag}.
An application of the implicit function theorem yields the existence of a solution $\Psi$ to the modified equation that decays like $e^{-|\omega|\tau}$ provided the data $\mb{v}$ are small and $T$ is sufficiently close to $1$.
Thus, we retain the linear decay on the nonlinear level.
In a second step, we show that, for given small $\mb{v}$, there exists a $T$ close to $1$ such that $\mb{F}(\mb{v},T)$ is in fact zero and thereby, we obtain a decaying solution to Eq.~\eqref{eq:maincssopmildintro}.
This is again accomplished by an application of the implicit function theorem.
Here, it is crucial that the Fr\'echet derivative $\partial_T \mb{U}(\mb{0},T)|_{T=1}$ is an element of the unstable subspace $\mathrm{rg}P$.
In other words, the unstable subspace is spanned by the tangent vector at $T=1$ to the curve $T \mapsto \mb{U}(\mb{0},T) \approx \psi^T[0]$ in the initial data space. 
This curve is generated by varying the blow up time $T$ and
therefore, the instability of the linear evolution is caused by the time translation symmetry of the wave maps equation.

\subsection{Notations and conventions}
For Banach spaces $X,Y$ we denote by $\mc{B}(X,Y)$ the Banach space of bounded linear operators from $X$ to $Y$. As usual, we write $\mc{B}(X)$ if $X=Y$.
For a Fr\'echet differentiable map $F: U \times V \subset X \times Y \to Z$ where $X,Y,Z$ are Banach spaces and $U \subset X$, $V \subset Y$ are open subsets, we denote by $D_1 F: U \times V \to \mc{B}(X,Z)$, $D_2 F: U\times V \to \mc{B}(Y,Z)$ the respective partial Fr\'echet derivatives and by $DF$ the (full) Fr\'echet derivative of $F$.
Vectors are denoted by bold letters and the individual components are numbered by lower indices, e.g., $\mb{u}=(u_1,u_2)$. We do not distinguish between row and column vectors.
Furthermore, for $a,b \in \mathbb{R}$ we use the notation $a \lesssim b$ if there exists a constant $c>0$ such that $a \leq cb$ and we write $a \simeq b$ if $a \lesssim b$ and $b \lesssim a$. 
Unless otherwise stated, it is implicitly assumed that the constant $c$ is absolute, i.e., it does not depend on any of the involved quantities in the inequality.
The symbol $\sim$ is reserved for asymptotic equality.
Finally, the letter $C$ (possibly with indices) denotes a generic nonnegative constant which is not supposed to have the same value at each occurrence. 

\section{Review of the linear perturbation theory}

For the convenience of the reader we review the results recently obtained in \cite{DSA} which prepare the ground for the nonlinear stability theory of the self--similar wave map $\psi^T$ elaborated in the present paper.

\subsection{First order formulation}
Roughly speaking, the main outcome of our paper \cite{DSA} is a well--posed initial value formulation of the linear evolution problem \eqref{eq:linearcssscalar}.
In order to accomplish this, the equation \eqref{eq:mainlin} is first transformed to a first--order system in time by defining the new variables
\begin{align*} 
\varphi_1(t,r)&:=\frac{r^2 \varphi_t(t,r)}{T-t} \\
\varphi_2(t,r)&:=r\varphi_r(t,r)+2 \varphi(t,r).
\end{align*}
This is motivated by the discussion in the introduction, observe in particular that $\varphi_2$ is nothing but $\hat{\varphi}$ in Eq.~\eqref{eq:phihat}.
The variable $\varphi_1$ is an appropriately scaled time derivative.
As before, the inverse transformation is 
$$ \varphi(t,r)=\frac{1}{r^2}\int_0^r r' \varphi_2(t,r')dr' $$
and by definition, the free operator is given by
$$ \varphi_{rr}+\tfrac{2}{r}\varphi_r-\tfrac{2}{r^2}\varphi=\tfrac{1}{r}\partial_r \varphi_2
-\tfrac{1}{r^2}\varphi_2. $$ 
Consequently, we obtain
\begin{align}
\label{eq:sys1}
\partial_t \varphi_1&=\frac{r^2 \varphi_{tt}}{T-t}+\frac{r^2 \varphi_t}{(T-t)^2} \\
&=\frac{r^2}{T-t}\left [\varphi_{rr}+\frac{2}{r}\varphi_r-\frac{2}{r^2}\varphi-\frac{2\cos(2\psi^T)-2}{r^2}\varphi-\frac{N_T(\varphi)}{r^2} \right ]+\frac{\varphi_1}{T-t} \nonumber \\
&=\frac{1}{T-t}\left [ \varphi_1+r\partial_r \varphi_2-\varphi_2-\frac{2\cos(2\psi^T)-2}
{r^2}\int r \varphi_2-N_T \left (\tfrac{1}{r^2}\int r \varphi_2 \right ) \right ] \nonumber
\end{align}
where $\int r \varphi_2$ is shorthand for $\int_0^r r' \varphi_2(t,r')dr'$ and, as the reader has already noticed, we are a bit sloppy with our notation and omit the arguments of the functions occasionally.
The second equation in the first--order system is simply given by the identity
\begin{equation}
\label{eq:sys2}
\partial_t \varphi_2=\tfrac{1}{r}\partial_r (r^2 \varphi_t)=\frac{T-t}{r}\partial_r \varphi_1. 
\end{equation}
Furthermore, the initial data $(\psi(0,\cdot),\psi_t(0,\cdot))=(f,g)$ from Eq.~\eqref{eq:maincauchy} 
transform into
\begin{align}
\label{eq:sys3}
\varphi_1(0,r)&=\tfrac{r^2}{T}\left [g(r)-\psi_t^T(0,r) \right ] \\
\varphi_2(0,r)&=r \left [f'(r)-\psi_r^T(0,r) \right ]+2 \left [f(r)-\psi^T(0,r) \right ]. \nonumber
\end{align}
Now we rewrite Eqs.~\eqref{eq:sys1}, \eqref{eq:sys2} and \eqref{eq:sys3} in similarity coordinates 
$$\tau=-\log(T-t),\quad \rho=\frac{r}{T-t}$$ by setting 
$$ \phi_j(\tau,\rho):=\varphi_j(T-e^{-\tau}, e^{-\tau}\rho), \quad j=1,2 $$
and recalling that $\partial_t=e^\tau (\partial_\tau+\rho \partial_\rho)$, $\partial_r=e^\tau \partial_\rho$.
Furthermore, note that
$$ \frac{1}{r^2}\int_0^r r'\varphi_2(t,r')dr'=\frac{1}{\rho^2}\int_0^\rho \rho \phi_2(t,\rho')d\rho' $$
and thus, we arrive at the system
\begin{equation}
\label{eq:main1stcss}
\left \{ \begin{array}{l}
\left. \begin{array}{l}
\partial_\tau \phi_1=-\rho \partial_\rho \phi_1+\phi_1+\rho \partial_\rho \phi_2
-\phi_2-\frac{V(\rho)-2}{\rho^2}\int \rho \phi_2-N_T \left ( \rho^{-2}\int \rho \phi_2 \right ) 
 \\
\partial_\tau \phi_2=\frac{1}{\rho}\partial_\rho \phi_1-\rho \partial_\rho \phi_2 \end{array} \right \}
\mbox{ in } \mc{Z}_T \\
\left. \begin{array}{l}
\phi_1(-\log T,\rho)=T\rho^2\left [g(T\rho)-\psi^T_t(0,T\rho) \right ] \\
\phi_2(-\log T,\rho)=T\rho \left [f'(T\rho)-\psi^T_r(0,T\rho) \right ] +2 \left [f(T\rho)-\psi^T(0,T\rho)
\right ] 
\end{array} \right \}
\mbox{ for } \rho \in [0,1]
\end{array} \right .
\end{equation} 
where $\mc{Z}_T:=\{(\tau,\rho): \tau > -\log T, \rho \in [0,1]\}$ and $V$ is given by Eq.~\eqref{eq:V}.
The system \eqref{eq:main1stcss} is equivalent to the original Cauchy problem \eqref{eq:maincauchy}.
We again recall that the orginal wave map $\psi$ can be reconstructed from $\phi_2$ by the formula
\begin{equation} 
\label{eq:origpsi}
\psi(t,r)=\psi^T(t,r)+\tfrac{1}{r^2}\int_0^r r' \phi_2\left (-\log(T-t), \tfrac{r'}{T-t} \right )dr'. 
\end{equation}
Furthermore, for the time derivative of the original field we have
\begin{equation}
\label{eq:origpsit}
\psi_t(t,r)=\psi_t^T(t,r)+\tfrac{T-t}{r^2}\phi_1\left (-\log(T-t),\tfrac{r}{T-t} \right ).
\end{equation}

In \cite{DSA} we have studied the corresponding \emph{linearized problem}, that is,
\begin{equation}
\label{eq:main1stcsslin}
\left \{ \begin{array}{l}
\left. \begin{array}{l}
\partial_\tau \phi_1=-\rho \partial_\rho \phi_1+\phi_1+\rho \partial_\rho \phi_2
-\phi_2-\frac{V(\rho)-2}{\rho^2}\int \rho \phi_2 
 \\
\partial_\tau \phi_2=\frac{1}{\rho}\partial_\rho \phi_1-\rho \partial_\rho \phi_2 \end{array} \right \}
\mbox{ in } \mc{Z}_T \\
\left. \begin{array}{l}
\phi_1(-\log T,\rho)=T\rho^2\left [g(T\rho)-\psi^T_t(0,T\rho) \right ] \\
\phi_2(-\log T,\rho)=T\rho \left [f'(T\rho)-\psi^T_r(0,T\rho) \right ] +2 \left [f(T\rho)-\psi^T(0,T\rho)
\right ] 
\end{array} \right \}
\mbox{ for } \rho \in [0,1]
\end{array} \right .
\end{equation} 
which is obtained from \eqref{eq:main1stcss} by dropping the nonlinearity.
The concrete form of the initial data is irrelevant on the linear level and therefore, we will omit it in this section.

\subsection{Operator formulation and well--posedness of the linearized problem}
\label{sec:oplin}
The basic philosophy of both the present paper and \cite{DSA}, is to formulate the evolution problems \eqref{eq:main1stcss} and \eqref{eq:main1stcsslin} as ordinary differential equations on suitable Hilbert spaces.
We would like to emphasize that \emph{the following choice of a Hilbert space is central and of crucial importance for the whole construction}.
\begin{definition}
We define the Hilbert space $\mc{H}$ as the completion of
$$ \{u \in C^2[0,1]: u(0)=u'(0)=0\} \times \{u \in C^1[0,1]: u(0)=0\} $$
with respect to the norm $\|\cdot\|:=\sqrt{(\cdot|\cdot)}$ induced by the inner product
$$ (\mb{u}|\mb{v}):=\int_0^1 u_1'(\rho)\overline{v_1'(\rho)}\frac{d\rho}{\rho^2}+\int_0^1 u_2'(\rho)\overline{v_2'(\rho)}d\rho. $$
\end{definition}
In order to motivate the boundary conditions, recall that 
$$ \varphi_1(t,r)=\frac{r^2 \varphi_t(t,r)}{T-t}, \quad \varphi_2(t,r)=r\varphi_r(t,r)+2\varphi(t,r) $$
and sufficiently regular solutions $\varphi$ of Eq.~\eqref{eq:mainlin} satisfy $\varphi_t(t,r)=O(r)$ as $r \to 0+$.
The Hilbert space $\mc{H}$ is supposed to hold the solution $(\phi_1, \phi_2)$ of Eq.~\eqref{eq:main1stcss} (or Eq.~\eqref{eq:main1stcsslin}) 
at a given instance of time and, since $\phi_j$ is nothing but $\varphi_j$ in similarity coordinates, the boundary conditions are natural requirements that follow from the behavior of regular solutions of Eq.~\eqref{eq:main} at the origin.
Note also that the norm $\|\cdot\|$ corresponds exactly to the local energy of $\hat{\varphi}$ as discussed in the introduction, cf.~Eq.~\eqref{eq:phihat}. 
We recall a convenient density result.
\begin{lemma}
\label{lem:Hdense}
The set $C^\infty_c(0,1] \times C^\infty_c(0,1]$ is dense in $\mc{H}$.
\end{lemma}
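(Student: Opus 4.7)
Since $\mathcal{H}$ is by construction the completion of the core space
\[
X:=\{u\in C^2[0,1]:u(0)=u'(0)=0\}\times\{u\in C^1[0,1]:u(0)=0\},
\]
it suffices to show that every $\mathbf{u}=(u_1,u_2)\in X$ can be approximated in $\|\cdot\|$ by elements of $C_c^\infty(0,1]\times C_c^\infty(0,1]$. I would carry this out in two standard steps: first cut off near the origin to obtain functions supported in $[a,1]$ with $a>0$, then mollify to obtain smoothness.

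For the cutoff step, I would fix $\chi\in C^\infty[0,\infty)$ with $\chi\equiv 0$ on $[0,1/2]$ and $\chi\equiv 1$ on $[1,\infty)$, set $\chi_\varepsilon(\rho):=\chi(\rho/\varepsilon)$, and consider $\chi_\varepsilon\mathbf{u}\in X$. Expanding derivatives via the product rule and using the elementary inequality $(a+b)^2\leq 2a^2+2b^2$,
\[
\|\chi_\varepsilon u_1-u_1\|_1^2\lesssim \int_0^1|\chi_\varepsilon'|^2\frac{|u_1|^2}{\rho^2}\,d\rho+\int_0^1(\chi_\varepsilon-1)^2\frac{|u_1'|^2}{\rho^2}\,d\rho,
\]
and analogously for $u_2$ (without the $\rho^{-2}$ weight). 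The boundary conditions together with $u_1\in C^2$, $u_2\in C^1$ give the Taylor bounds $|u_1(\rho)|\lesssim\rho^2$, $|u_1'(\rho)|\lesssim\rho$, $|u_2(\rho)|\lesssim\rho$; in particular $|u_1'|^2/\rho^2$ and $|u_2'|^2$ belong to $L^1(0,1)$. Dominated convergence kills the terms containing $\chi_\varepsilon-1$, while the terms containing $\chi_\varepsilon'$ are estimated directly: $|\chi_\varepsilon'|\lesssim 1/\varepsilon$ on a set of measure $\lesssim\varepsilon$, and on that set $|u_1|^2/\rho^2\lesssim\varepsilon^2$ and $|u_2|^2\lesssim\varepsilon^2$, yielding a total contribution of size $O(\varepsilon)\to 0$.

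For the mollification step, given a cut-off profile supported in $[a,1]$, I would extend it by zero to $\mathbb{R}$ and convolve componentwise with a left-sided mollifier $\eta_\delta\in C_c^\infty([-\delta,0])$ of unit mass, producing functions $\mathbf{u}_\delta\in C^\infty(\mathbb{R})^2$ whose supports lie in $[a-\delta,1]\subset(0,1]$ for $\delta<a$; thus $\mathbf{u}_\delta\in C_c^\infty(0,1]\times C_c^\infty(0,1]$. Standard $L^2$ mollifier convergence on the fixed interval $[a/2,1]$, together with the uniform bound $\rho^{-2}\leq 4/a^2$ there, then gives $\mathbf{u}_\delta\to \mathbf{u}$ in $\mathcal{H}$. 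A diagonal subsequence concludes the proof. The only real subtlety is the weighted norm in the first slot: one must invoke the $O(\rho^2)$ vanishing of $u_1$ at the origin (not merely $u_1(0)=0$) to absorb both the cutoff derivative term and the singular weight; everything else is routine.
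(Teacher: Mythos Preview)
The paper itself does not give a proof of this lemma; it simply refers to the companion paper \cite{DSA}. So there is nothing to compare your argument against directly, and the question is whether your argument stands on its own. Your cutoff step is fine: the Taylor bounds $|u_1(\rho)|\lesssim\rho^2$, $|u_1'(\rho)|\lesssim\rho$, $|u_2(\rho)|\lesssim\rho$ coming from the boundary conditions are exactly what is needed, and the estimates you wrote go through.

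The mollification step, however, has a genuine gap at the right endpoint $\rho=1$. After the cutoff, your function $u_j$ is supported in $[a,1]$ but in general $u_j(1)\neq 0$. When you extend by zero to $\mathbb{R}$, the extension $\tilde u_j$ acquires a jump at $1$, and its distributional derivative is $\tilde u_j'=\widetilde{u_j'}-u_j(1)\,\delta_1$. Convolving with a \emph{left}-sided mollifier $\eta_\delta\in C_c^\infty([-\delta,0])$ therefore gives
\[
(u_j*\eta_\delta)'=\widetilde{u_j'}*\eta_\delta-u_j(1)\,\eta_\delta(\cdot-1),
\]
and the second term is supported in $[1-\delta,1]\subset[0,1]$ with $\|\eta_\delta(\cdot-1)\|_{L^2(0,1)}^2\sim\delta^{-1}\to\infty$. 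Thus $(u_j*\eta_\delta)'$ does \emph{not} converge to $u_j'$ in $L^2(0,1)$ unless $u_j(1)=0$, and your appeal to ``standard $L^2$ mollifier convergence'' fails precisely here. The fix is easy: either use a \emph{right}-sided mollifier $\eta_\delta\in C_c^\infty([0,\delta])$ (then the delta contribution sits in $[1,1+\delta]$, outside $[0,1]$, and the restriction to $[0,1]$ still lies in $C_c^\infty(0,1]$ with support in $[a,1]$), or first extend $u_j$ as a $C^2$ (resp.\ $C^1$) function slightly past $1$ before mollifying. With either of these modifications your scheme works.
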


\begin{proof}
See \cite{DSA}.
\end{proof}

Furthermore, the space $\mc{H}$ is continuously embedded in $C[0,1] \times C[0,1]$.
\begin{lemma}
\label{lem:Hcont}
Let $\mb{u} \in \mc{H}$. Then $\mb{u} \in C[0,1] \times C[0,1]$ and we have the estimate
$$ \|\mb{u}\|_{L^\infty(0,1) \times L^\infty(0,1)} \lesssim \|\mb{u}\| $$
for all $\mb{u} \in \mc{H}$.
In particular, every $\mb{u} \in \mc{H}$ satisfies the boundary condition $\mb{u}(0)=\mb{0}$.
\end{lemma}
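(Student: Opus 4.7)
\medskip

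\noindent\textbf{Proof proposal.} The strategy is to establish pointwise estimates on the pre--completion space and then pass to the limit using Lemma \ref{lem:Hdense}. The guiding observation is that the weight $\rho^{-2}$ in the first integral and the vanishing of $u_1$ together with $u_1'$ at the origin should yield a much stronger estimate than for $u_2$.

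First, let $\mb{u}$ lie in the pre--completion space of smooth representatives, so that $u_1 \in C^2[0,1]$ with $u_1(0)=u_1'(0)=0$ and $u_2 \in C^1[0,1]$ with $u_2(0)=0$. For $\rho \in (0,1]$ I would write $u_1(\rho)=\int_0^\rho u_1'(s)\,ds$ and apply Cauchy--Schwarz with the weight split as $u_1'(s)=s\cdot\frac{u_1'(s)}{s}$, giving
$$ |u_1(\rho)|^2 \leq \left(\int_0^\rho s^2\,ds\right)\int_0^\rho \frac{|u_1'(s)|^2}{s^2}\,ds \leq \tfrac{\rho^3}{3}\,\|\mb{u}\|^2. $$
For the second component the analogous but simpler computation gives $|u_2(\rho)|^2 \leq \rho\,\|\mb{u}\|^2$. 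Together these yield the uniform bound $\|\mb{u}\|_{L^\infty(0,1)\times L^\infty(0,1)} \lesssim \|\mb{u}\|$ on the pre--completion space.

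Next I would extend this to all of $\mc{H}$ via Lemma \ref{lem:Hdense}. Given $\mb{u} \in \mc{H}$, choose a sequence $\mb{u}^{(n)} \in C^\infty_c(0,1]\times C^\infty_c(0,1]$ with $\mb{u}^{(n)}\to \mb{u}$ in $\mc{H}$. Since each $\mb{u}^{(n)}$ vanishes identically near $0$, the boundary conditions of the pre--completion space are trivially satisfied, and applying the pointwise estimate to the differences $\mb{u}^{(n)}-\mb{u}^{(m)}$ shows that $(\mb{u}^{(n)})$ is uniformly Cauchy on $[0,1]$. Hence it converges uniformly to a continuous function $\mb{w}\in C[0,1]\times C[0,1]$ with $\mb{w}(0)=\mb{0}$. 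A standard check (any two such approximating sequences have difference tending to zero in $\mc{H}$, hence uniformly, by the same estimate) shows $\mb{w}$ depends only on the equivalence class $\mb{u}$, so we may identify $\mb{u}$ with $\mb{w}$. Finally, passing to the limit in $\|\mb{u}^{(n)}\|_{L^\infty\times L^\infty}\lesssim \|\mb{u}^{(n)}\|$ gives the claimed estimate and, in particular, $\mb{u}(0)=\mb{0}$.

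The computation itself is essentially a one--dimensional Hardy--type inequality, so I do not expect a serious obstacle. The only point that demands care is the identification of an abstract Cauchy--sequence element of $\mc{H}$ with a genuine pointwise--defined continuous function, and this is precisely what the uniform Cauchy estimate on approximating sequences guarantees.
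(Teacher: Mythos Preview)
Your argument is correct. The paper itself does not give a proof here but simply refers to the companion paper \cite{DSA}, so there is no in--text argument to compare against; the Cauchy--Schwarz estimate $|u_j(\rho)|\leq \rho^{1/2}\|\mb{u}\|$ (with the sharper $\rho^{3/2}$ for $j=1$) followed by passage to the completion via Lemma~\ref{lem:Hdense} is exactly the natural route and is almost certainly what is done in \cite{DSA}.
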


\begin{proof}
 See \cite{DSA}.
\end{proof}

Next we define a differential operator $\tilde{L}: \mc{D}(\tilde{L}) \subset \mc{H} \to \mc{H}$ on a 
suitable dense domain $\mc{D}(\tilde{L})$ by setting
$$ \tilde{L}\mb{u}(\rho):=\left (\begin{array}{c}
-\rho u_1'(\rho)+u_1(\rho)+\rho u_2'(\rho)-u_2(\rho)-V_1(\rho)\int_0^\rho \rho' u_2(\rho')d\rho' \\
\frac{1}{\rho}u_1'(\rho)-\rho u_2'(\rho) 
                       \end{array} \right ) $$
where the \emph{regularized potential} $V_1$ is given by
$$ V_1(\rho):=\frac{V(\rho)-2}{\rho^2}=-\frac{16}{(1+\rho^2)^2}. $$ 
We remark that in \cite{DSA} we actually start with a \emph{free operator} $\tilde{L}_0$.
In the notation of \cite{DSA} we have $\tilde{L}=\tilde{L}_0+L'$ where $L'$ contains the potential term and $L' \in \mc{B}(\mc{H})$.
Consequently, 
\begin{equation}
 \label{eq:linoppre}
 \left \{ \begin{array}{l}
\frac{d}{d\tau}\Phi(\tau)=\tilde{L}\Phi(\tau) \mbox{ for }\tau>-\log T \\
\Phi(-\log T)=\mb{u}
          \end{array} \right .
\end{equation}
is an operator formulation of \eqref{eq:main1stcsslin}
with $\Phi: [-\log T, \infty) \to \mc{H}$ and $\mb{u}$ are the initial data.
We have the following well--posedness result from \cite{DSA}.
\begin{proposition}
\label{prop:gen}
 The operator $\tilde{L}$ is closable and its closure $L$ generates a strongly continuous one--parameter semigroup $S: [0,\infty) \to \mc{B}(\mc{H})$ satisfying
 $$ \|S(\tau)\|\leq e^{(-\frac{1}{2}+\|L'\|)\tau} $$
 for all $\tau \geq 0$.
 In particular, the Cauchy problem
\begin{equation*}
 \left \{ \begin{array}{l}
\frac{d}{d\tau}\Phi(\tau)=L\Phi(\tau) \mbox{ for }\tau>-\log T \\
\Phi(-\log T)=\mb{u}
          \end{array} \right .
\end{equation*}
for $\mb{u} \in \mc{D}(L)$ has a unique solution which is given by
$$ \Phi(\tau)=S(\tau+\log T)\mb{u} $$
for all $\tau \geq -\log T$.
\end{proposition}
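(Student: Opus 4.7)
The natural plan is to realize $L$ as a relatively bounded perturbation of a free generator and apply the Lumer--Phillips theorem followed by the bounded perturbation theorem. Since the statement already splits $\tilde{L} = \tilde{L}_0 + L'$ with $L' \in \mc{B}(\mc{H})$ carrying the potential $V_1$ and the (bounded) integral operator acting on $u_2$, the work concentrates on $\tilde{L}_0$, which is the first--order transport operator
\[
\tilde{L}_0 \mb{u}(\rho) = \begin{pmatrix} -\rho u_1'(\rho) + u_1(\rho) + \rho u_2'(\rho) - u_2(\rho) \\ \tfrac{1}{\rho} u_1'(\rho) - \rho u_2'(\rho) \end{pmatrix}.
\]
The goal is to show that $\tilde{L}_0 + \tfrac{1}{2}$ is dissipative and that its range condition holds; then $\overline{\tilde{L}_0}$ generates a semigroup of type $-\tfrac{1}{2}$ and adding $L'$ costs at most $\|L'\|$ in the exponent.

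First I would fix the natural domain $\mc{D}(\tilde{L}_0) = \{\mb{u} \in C^2[0,1] \times C^1[0,1] : u_1(0)=u_1'(0)=u_2(0)=0\}$ (which is dense by Lemma \ref{lem:Hdense}) and compute $\mathrm{Re}(\tilde{L}_0 \mb{u} \mid \mb{u})$. Differentiating the two components and integrating by parts against the weights $\rho^{-2}$ and $1$ of the inner product, the cross terms involving $u_1'$ and $u_2'$ ought to combine, up to boundary contributions at $\rho = 1$, into a pure divergence that cancels. The boundary term at $\rho = 0$ is absorbed by the conditions $u_1'(0)=u_2(0)=0$, while the boundary term at $\rho=1$ should take the form $-\tfrac{1}{2}|u_1'(1) - u_2'(1)|^2$ or similar, reflecting that $\rho=1$ is the characteristic outflow boundary (so no boundary condition is imposed there). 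The bulk terms produced by the undifferentiated $u_1$ in the first component and the $-\rho u_2'$ in the second, when paired with $\rho^{-2}|u_1'|^2$ and $|u_2'|^2$, should yield exactly $-\tfrac{1}{2}\|\mb{u}\|^2$. This gives $\mathrm{Re}(\tilde{L}_0 \mb{u} \mid \mb{u}) \leq -\tfrac{1}{2}\|\mb{u}\|^2$ and hence dissipativity of $\tilde{L}_0 + \tfrac{1}{2}$.

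Second, I would verify the range condition: for some (equivalently all) $\lambda > 0$, the range of $\lambda - (\tilde{L}_0 + \tfrac{1}{2})$ is dense in $\mc{H}$. By Lemma \ref{lem:Hdense} it suffices to solve $(\lambda - \tilde{L}_0)\mb{u} = \mb{f}$ for $\mb{f} \in C^\infty_c(0,1] \times C^\infty_c(0,1]$. After eliminating $u_1'$ via the second equation, this reduces to a linear second--order ODE for $u_2$ on $(0,1)$ with two regular singular points, at the origin and at the lightcone $\rho = 1$. At $\rho=0$ the Frobenius indices should select a unique solution matching the boundary conditions $u_1(0)=u_1'(0)=u_2(0)=0$, and at $\rho=1$ the singularity is of the characteristic type so no extra condition is needed; the solution is automatically smooth enough to lie in $\mc{H}$. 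Granting this, Lumer--Phillips gives that $\overline{\tilde{L}_0}$ generates a $C_0$--semigroup of contractions on the shifted space, i.e., a semigroup of type $-\tfrac{1}{2}$ on $\mc{H}$. The bounded perturbation theorem then yields that $L := \overline{\tilde{L}_0 + L'} = \overline{\tilde{L}_0} + L'$ generates $S(\tau)$ with $\|S(\tau)\| \leq e^{(-1/2 + \|L'\|)\tau}$. Uniqueness of $\Phi(\tau) = S(\tau + \log T)\mb{u}$ for $\mb{u} \in \mc{D}(L)$ is then standard semigroup theory.

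The main obstacle is the simultaneous analysis of the two singular endpoints in both the dissipativity computation and the range condition. At $\rho = 0$ one must show that the integration by parts boundary terms genuinely vanish on the domain (not just formally), which forces one to be careful about which boundary conditions survive passage to the closure (cf.\ Lemma \ref{lem:Hcont}). At $\rho = 1$ one must identify the boundary contribution as a nonpositive quadratic form (so no maximality obstruction arises) and, on the range side, show that the Frobenius analysis at the characteristic point produces a smooth solution. Once both endpoints are handled, the remainder of the argument is a textbook application of Lumer--Phillips and bounded perturbation.
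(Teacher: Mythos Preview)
Your proposal is correct and matches the approach indicated in the paper: the proof is explicitly attributed to an application of the Lumer--Phillips theorem (with full details deferred to the companion paper \cite{DSA}), precisely via the decomposition $\tilde{L}=\tilde{L}_0+L'$, dissipativity of $\tilde{L}_0+\tfrac12$, the range condition, and the bounded perturbation theorem. Your identification of the endpoint analysis at $\rho=0$ and $\rho=1$ as the only nontrivial steps is also in line with how this is handled in \cite{DSA}.
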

The proof of Proposition \ref{prop:gen} consists of an application of the Lumer--Phillips Theorem of semigroup theory, see e.g., \cite{engel}.
The rest of \cite{DSA} is concerned with a detailed spectral analysis of $L$ in order to improve the growth bound for $S(\tau)$.

As already mentioned in the introduction, the linearized problem Eq.~\eqref{eq:linearcssscalar} has an exponentially growing solution which emerges from the time translation symmetry of the original equation.
In the operator formulation, this is reflected by an unstable eigenvalue $\lambda=1$ in the spectrum of $L$.
The associated eigenspace is one--dimensional and spanned by the \emph{gauge mode} 
$$ \mb{g}(\rho):=\frac{1}{(1+\rho^2)^2}\left ( \begin{array}{c} 
2 \rho^3 \\ \rho(3+\rho^2) \end{array} \right ). $$
In order to formulate the main result of \cite{DSA}, we need to recall the definition of the spectral bound $s_0$.
\begin{definition}
\label{def:sb}
 The spectral bound $s_0$ is defined as 
 $$ s_0:=\sup\{\Re \lambda, \lambda \not=1: \mbox{ Eq.~\eqref{eq:evodeintro} has a nontrivial solution } 
u_\lambda \in C^\infty[0,1] \}. $$
\end{definition}
In other words, $s_0$ is the real part of the largest \footnote{The ``largest'' eigenvalue here means the eigenvalue with the largest real part.} eigenvalue (in the sense of Sec.~\ref{sec:intromainresult}) apart from the gauge eigenvalue $\lambda=1$.
We remark that in \cite{DSA} it is proved that $s_0<\frac{1}{2}$, however, numerical results (cf.~\cite{DSA}, Sec.~3) strongly suggest that in fact $s_0 \approx -0.54$.
The main result from \cite{DSA} gives a satisfactory description of the linearized time evolution and, via $s_0$, establishes the link between the linear Cauchy problem \eqref{eq:main1stcsslin} and the mode stability ODE \eqref{eq:evodeintro}.
\begin{theorem}[Mode stability implies linear stability, \cite{DSA}]
\label{thm:linear}
Let $\varepsilon>0$ be arbitrary. 
There exists a projection $P \in \mc{B}(\mc{H})$ onto $\langle \mb{g} \rangle$
which commutes with the semigroup $S(\tau)$ (and $PL \subset LP$) such that
$$ \|S(\tau)P\mb{f}\|=e^\tau \|P\mb{f}\| $$ as well as
$$ \|S(\tau)(1-P)\mb{f}\| \leq C_\varepsilon e^{(\max\{-
\frac{1}{2},s_0\}+\varepsilon)\tau}\|(1-P)\mb{f}\| $$
for all $\tau \geq 0$ and all $\mb{f} \in \mc{H}$ where $C_\varepsilon>0$ is a constant that depends on $\varepsilon$.
\end{theorem}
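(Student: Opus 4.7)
The plan is to isolate the unstable gauge eigenvalue $\lambda = 1$ of $L$ by a Riesz projection, and then establish the decay on the complementary subspace via a Gearhart--Pr\"uss type argument combined with ODE asymptotics for the resolvent.

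First, Proposition~\ref{prop:gen} yields $\sigma(L) \subset \{\Re \lambda \leq -\tfrac{1}{2} + \|L'\|\}$, so the spectrum is confined to a left half--plane. Solving the eigenvalue problem $L\mb{u} = \lambda \mb{u}$ for $\mb{u} \in \mc{D}(L)$ reduces to the scalar ODE~\eqref{eq:evodeintro} together with smoothness at $\rho = 0$ and $\rho = 1$; at $\lambda = 1$ the unique admissible solution (up to scalar) is $u_1(\rho) = \rho/(1+\rho^2)$, which reconstructs $\mb{g}$. Combined with the definition of $s_0$ and the fact, proved in \cite{DSA}, that the continuous part of $\sigma(L)$ lies in $\{\Re \lambda \leq -\tfrac{1}{2}\}$, this shows that $\lambda = 1$ is isolated in $\sigma(L)$ and, under mode stability, that all remaining spectrum is contained in $\{\Re \lambda \leq \max\{-\tfrac{1}{2}, s_0\}\}$.

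With this in hand, I would define the Riesz projection
\[
P := \frac{1}{2\pi i}\oint_{\Gamma}(\lambda - L)^{-1}\,d\lambda,
\]
where $\Gamma$ is a small positively oriented circle enclosing $\lambda = 1$ only. Standard holomorphic functional calculus yields $P^2 = P$, $PL \subset LP$, $PS(\tau) = S(\tau)P$, and a topological direct sum $\mc{H} = \mathrm{rg}\,P \oplus \ker P$ into closed $L$- and $S$-invariant subspaces. To conclude $\mathrm{rg}\,P = \langle \mb{g}\rangle$ one must rule out Jordan blocks, i.e., show that $(L - 1)\mb{v} = \mb{g}$ has no solution $\mb{v} \in \mc{D}(L)$: variation of parameters using $\mb{g}$ produces a linearly independent second solution that develops a logarithmic singularity at $\rho = 1$, forcing it out of $\mc{D}(L)$. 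On $\mathrm{rg}\,P$ the operator $L$ acts as multiplication by $1$, so $S(\tau)|_{\mathrm{rg}\,P} = e^\tau$ and the first estimate holds with equality.

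The main obstacle is the decay bound on $\ker P$. Setting $\omega := \max\{-\tfrac{1}{2}, s_0\} + \varepsilon$, the preceding step places $\sigma(L|_{\ker P})$ strictly to the left of the line $\Re \lambda = \omega$. Since $\mc{H}$ is a Hilbert space, the Gearhart--Pr\"uss theorem reduces the required estimate $\|S(\tau)(1-P)\mb{f}\| \leq C_\varepsilon e^{\omega \tau}\|(1-P)\mb{f}\|$ to the uniform resolvent bound
\[
\sup_{\mu \in \mathbb{R}}\,\|(\omega + i\mu - L)^{-1}(1-P)\|_{\mc{B}(\mc{H})} < \infty.
\]
For bounded $|\mu|$ this follows from the resolvent's holomorphic extension to a neighborhood of the line together with compactness. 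For $|\mu| \to \infty$ one must reduce $(\omega + i\mu - L)\mb{u} = \mb{f}$ to a second--order ODE with a large parameter and use Liouville--Green (WKB) asymptotics to control the Green's function uniformly in $\mu$, then transfer those pointwise estimates to the weighted $\mc{H}$--norm. This uniform high--frequency resolvent analysis is the principal technical difficulty; once it is in place, Gearhart--Pr\"uss closes the argument.
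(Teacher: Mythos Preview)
Your proposal is correct and mirrors the approach outlined in the paper (with full details deferred to \cite{DSA}): isolation of $\lambda=1$ via a compactness argument, the Riesz projection formula for $P$, exclusion of a Jordan block by ODE analysis, and semigroup decay on $\ker P$ via a Gearhart--Pr\"uss type abstract result fed by uniform resolvent bounds. The only point to sharpen is your Jordan block step: the issue is not merely that the second homogeneous solution is singular at $\rho=1$, but that the particular solution of the inhomogeneous scalar equation corresponding to $(L-1)\mb{v}=\mb{g}$, built by variation of parameters from both fundamental solutions, inherits a non-removable singularity and hence fails to lie in $\mc{D}(L)$.
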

This important result tells us that the fundamental self--similar solution $\psi^T$ is linearly stable if one ``ignores'' the gauge instability and assumes that it is mode stable.
We remark that the proof of Theorem \ref{thm:linear} is highly nontrivial since the operator $L$ is not normal. 
This inconvenient fact introduces a number of technical difficulties one has to overcome.
In order to prove the existence of the spectral projection $P$ one first shows that $\lambda=1$ (the gauge eigenvalue) is isolated in the spectrum of $L$. This follows from a compactness argument.
Then the existence of $P$ is a consequence of the standard formula
$$ P=\frac{1}{2\pi i}\int_\Gamma (\lambda-L)^{-1}d\lambda $$
where $\Gamma$ is a suitable curve in the resolvent set of $L$ that encloses the point $1$.
However, since $L$ is not normal, one also has to deal with the possibility that the algebraic multiplicity of the gauge eigenvalue is larger than one.
This has to be excluded by ODE analysis of the eigenvalue equation.
Finally, the semigroup bounds on the stable subspace follow from a general abstract result.
We refer the reader to \cite{DSA} where the proof is carefully elaborated in full detail.

\section{The nonlinear perturbation theory}
In this section we study the full nonlinear problem \eqref{eq:main1stcss}.
As already remarked, \eqref{eq:main1stcss} is equivalent to the original Cauchy problem \eqref{eq:maincauchy} for co--rotational wave maps.
We proceed by a completely abstract approach, i.e., we use operator theoretic methods.

\subsection{Estimates for the nonlinearity}
Our first goal is to formulate the Cauchy problem \eqref{eq:main1stcss} as a nonlinear ODE on the Hilbert space $\mc{H}$.
In order to do so, we need to understand the mapping properties of the nonlinearity.
We are going to need Hardy's inequality in the following form.
\begin{lemma}[Hardy's inequality]
 Let $\alpha>1$. Then
 $$ \int_0^1 \frac{|u(\rho)|^2}{\rho^\alpha}d\rho \leq \left (\frac{2}{\alpha-1} \right)^2 
\int_0^1 \frac{|u'(\rho)|^2}{\rho^{\alpha-2}}d\rho $$
 for all $u \in C_c^\infty(0,1]$.
\end{lemma}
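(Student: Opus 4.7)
The plan is the standard integration-by-parts trick: rewrite $\rho^{-\alpha}$ as a derivative, integrate by parts against $|u|^{2}$, and then close the inequality by Cauchy--Schwarz.

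Concretely, first I would observe that
$$\frac{1}{\rho^{\alpha}}=-\frac{1}{\alpha-1}\,\frac{d}{d\rho}\!\left[\rho^{1-\alpha}\right],$$
which uses $\alpha>1$ so that the antiderivative is well-defined. Substituting and integrating by parts on $(0,1]$ gives
$$\int_{0}^{1}\frac{|u(\rho)|^{2}}{\rho^{\alpha}}\,d\rho=-\frac{|u(1)|^{2}}{\alpha-1}+\frac{2}{\alpha-1}\int_{0}^{1}\mathrm{Re}\bigl(u'(\rho)\,\overline{u(\rho)}\bigr)\,\rho^{1-\alpha}\,d\rho.$$
The boundary term at $\rho=0$ vanishes because $u\in C_{c}^{\infty}(0,1]$ is supported away from the origin, and the boundary term at $\rho=1$ has a favorable sign, so it may simply be discarded.

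Next I would split the weight $\rho^{1-\alpha}=\rho^{1-\alpha/2}\cdot\rho^{-\alpha/2}$ and apply Cauchy--Schwarz to estimate
$$\frac{2}{\alpha-1}\int_{0}^{1}|u'(\rho)|\,|u(\rho)|\,\rho^{1-\alpha}\,d\rho\leq\frac{2}{\alpha-1}\left(\int_{0}^{1}\frac{|u'(\rho)|^{2}}{\rho^{\alpha-2}}\,d\rho\right)^{\!\!1/2}\!\left(\int_{0}^{1}\frac{|u(\rho)|^{2}}{\rho^{\alpha}}\,d\rho\right)^{\!\!1/2}.$$
If the left-hand side of the claimed inequality is zero there is nothing to prove; otherwise I divide both sides by $\bigl(\int_{0}^{1}|u(\rho)|^{2}\rho^{-\alpha}\,d\rho\bigr)^{1/2}$ (which is finite by the compact support of $u$ away from $0$) and square to obtain the asserted constant $\bigl(\tfrac{2}{\alpha-1}\bigr)^{2}$.

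There is no real obstacle here; the only subtle point is making sure all integrals are finite before dividing, which is guaranteed by $u\in C_{c}^{\infty}(0,1]$. The condition $\alpha>1$ is used twice: to define the antiderivative $\rho^{1-\alpha}/(1-\alpha)$ and to ensure the divisor $\alpha-1$ is positive (hence the boundary term at $\rho=1$ has the correct sign for discarding).
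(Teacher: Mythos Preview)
Your proof is correct and follows essentially the same route as the paper's: integrate $|u|^2\rho^{-\alpha}$ by parts using $\rho^{-\alpha}=-\tfrac{1}{\alpha-1}\tfrac{d}{d\rho}\rho^{1-\alpha}$, discard the nonpositive boundary term at $\rho=1$, and close with Cauchy--Schwarz after the weight splitting $\rho^{1-\alpha}=\rho^{-\alpha/2}\cdot\rho^{1-\alpha/2}$. The only cosmetic difference is that you explicitly separate out the trivial case before dividing, which the paper leaves implicit.
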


\begin{proof}
 Integration by parts and Cauchy--Schwarz yield
 \begin{align*}
  \int_0^1 \frac{|u(\rho)|^2}{\rho^\alpha}d\rho&=
\left . -\frac{1}{\alpha-1}\frac{|u(\rho)|^2}{\rho^{\alpha-1}} \right |_0^1+\frac{1}{\alpha-1}\int_0^1 \frac{u'(\rho)\overline{u(\rho)}+u(\rho)\overline{u'(\rho)}}{\rho^{\alpha-1}}d\rho \\
&=-\frac{1}{\alpha-1}|u(1)|^2 +\frac{2}{\alpha-1}\int_0^1 
\frac{\Re [u'(\rho)\overline{u(\rho)}]}{\rho^{\alpha-1}}d\rho \\
&\leq \frac{2}{\alpha-1}\int_0^1 \frac{|u(\rho)|}{\rho^{\alpha/2}}\frac{|u'(\rho)|}{\rho^{\alpha/2-1}}d\rho \\
&\leq \frac{2}{\alpha-1}\left (\int_0^1 \frac{|u(\rho)|^2}{\rho^{\alpha}}d\rho \right )^{1/2} 
\left ( \int_0^1 \frac{|u'(\rho)|^2}{\rho^{\alpha-2}}d\rho \right )^{1/2}
 \end{align*}
for all $u \in C_c^\infty(0,1]$ which implies the claim.
\end{proof}

We will also frequently use the following general fact on continuous extensions.

\begin{lemma}
 \label{lem:context}
 Let $X,Y$ be Banach spaces with norms $\|\cdot\|_X$, $\|\cdot\|_Y$, respectively.
 Assume further that $\tilde{X} \subset X$ is a dense subset of $X$ and let $\tilde{F}: \tilde{X} \to Y$ be a mapping that satisfies the estimate
 $$ \|\tilde{F}(x_1)-\tilde{F}(x_2)\|_Y \leq \|x_1-x_2\|_X^\alpha \gamma (\|x_1\|_X, \|x_2\|_X) $$
 for all $x_1,x_2 \in \tilde{X}$ where $\gamma: [0,\infty) \times [0,\infty) \to [0,\infty)$ is a continuous function and $\alpha>0$.
 Then there exists a unique continuous mapping $F: X \to Y$ with $F(x)=\tilde{F}(x)$ for all $x \in \tilde{X}$ and
  $$ \|F(x_1)-F(x_2)\|_Y \leq \|x_1-x_2\|_X^\alpha \gamma (\|x_1\|_X, \|x_2\|_X) $$
  for all $x_1,x_2 \in X$.
\end{lemma}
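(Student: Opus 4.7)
The plan is a standard approximation-and-extension argument, but with a mild twist because $\tilde F$ is not (globally) Hölder; the modulus depends on the norms of the inputs through $\gamma$. I will exploit boundedness of approximating sequences to make this harmless.

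First I would fix $x\in X$ and choose any sequence $(x_n)\subset \tilde X$ with $x_n\to x$, which exists by density. Since $(\|x_n\|_X)$ is bounded, say by some $M$, and $\gamma$ is continuous on the compact square $[0,M]^2$, the quantity $\gamma(\|x_n\|_X,\|x_m\|_X)$ is bounded by some constant $C_M$. Then the hypothesis gives
\[
\|\tilde F(x_n)-\tilde F(x_m)\|_Y \le C_M\,\|x_n-x_m\|_X^{\alpha}\longrightarrow 0
\]
as $n,m\to\infty$, so $(\tilde F(x_n))$ is Cauchy in $Y$. I set $F(x):=\lim_n \tilde F(x_n)$. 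To see this is independent of the sequence, if $(y_n)\subset\tilde X$ is another sequence with $y_n\to x$, interlace them into a single sequence $x_1,y_1,x_2,y_2,\dots$ converging to $x$; by the above argument the images form a Cauchy sequence in $Y$, hence both subsequential limits agree. In particular, if $x\in\tilde X$, taking the constant sequence shows $F(x)=\tilde F(x)$, so $F$ extends $\tilde F$.

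Next I would extend the Hölder-type estimate to $F$. Given $x_1,x_2\in X$, pick $(x_{1,n}),(x_{2,n})\subset\tilde X$ with $x_{j,n}\to x_j$. Then
\[
\|\tilde F(x_{1,n})-\tilde F(x_{2,n})\|_Y\le\|x_{1,n}-x_{2,n}\|_X^{\alpha}\,\gamma\bigl(\|x_{1,n}\|_X,\|x_{2,n}\|_X\bigr).
\]
The left side converges to $\|F(x_1)-F(x_2)\|_Y$ by construction and continuity of the norm on $Y$; on the right, $\|x_{1,n}-x_{2,n}\|_X\to\|x_1-x_2\|_X$ and $\|x_{j,n}\|_X\to\|x_j\|_X$, and by continuity of $\gamma$ the right side converges to $\|x_1-x_2\|_X^{\alpha}\,\gamma(\|x_1\|_X,\|x_2\|_X)$, giving the desired estimate for $F$. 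Continuity of $F$ is then immediate from this estimate (with $x_2$ fixed and $x_1\to x_2$ we have $\gamma$ bounded on a neighborhood and the $\alpha$-th power kills the difference).

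For uniqueness, any two continuous extensions of $\tilde F$ to $X$ must agree on the dense set $\tilde X$, hence coincide on all of $X$. The only place continuity of $\gamma$ is really used is in the limiting step for the estimate; everywhere else one only needs local boundedness of $\gamma$, which is automatic. I do not anticipate a serious obstacle — this is a routine extension-by-density argument, and the only point that requires any care is keeping track of the norms of the approximants so that $\gamma$ can be controlled uniformly along the sequences.
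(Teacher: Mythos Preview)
Your proof is correct and follows essentially the same extension-by-density argument as the paper: define $F$ via limits of Cauchy image sequences, check well-definedness, extend the estimate by passing to the limit, and deduce continuity and uniqueness. The only cosmetic differences are that the paper verifies well-definedness by a direct triangle-inequality argument (rather than interlacing) and proves continuity of $F$ by an explicit $\varepsilon$--$\delta$ computation before noting the estimate survives, whereas you pass the estimate to the limit first and read off continuity from it; both orderings are fine.
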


\begin{proof}
Let $x \in X$.
Then, by the assumed density of $\tilde{X}$, there exists a sequence $(x_j) \subset \tilde{X}$ with $\|x-x_j\|_X \to 0$ as $j \to \infty$.
This implies $\|x_j\|_X \to \|x\|_X$ and from
$$ \|\tilde{F}(x_j)-\tilde{F}(x_k)\|_Y \leq \|x_j-x_k\|_X^\alpha \gamma(\|x_j\|_X, \|x_k\|_X) \to 0 $$
as $j,k \to \infty$ 
we see that $(\tilde{F}(x_j))\subset Y$ is a Cauchy sequence.
Since $Y$ is a Banach space, this Cauchy sequence has a limit 
$$ y:=\lim_{j\to\infty}\tilde{F}(x_j) $$
and we define the mapping $F: X \to Y$ by $F(x):=y$.
For this to be well--defined we have to ensure that $y$ is independent of the chosen sequence $x_j \to x$.
In order to see this, let $(\tilde{x}_j) \subset \tilde{X}$ be another sequence with $\|x-\tilde{x}_j\|_X \to 0$ as $j \to \infty$.
Then we have
$$ \|x_j-\tilde{x}_j\|_X \leq \|x_j-x\|_X+\|x-\tilde{x}_j\|_X \to 0 $$
as $j \to \infty$ and thus,
\begin{align*} 
\|y-\tilde{F}(\tilde{x}_j)\|_Y &\leq \|y-\tilde{F}(x_j)\|_Y+\|\tilde{F}(x_j)-\tilde{F}(\tilde{x}_j)\|_Y \\
& \leq \|y-\tilde{F}(x_j)\|_Y+\|x_j-\tilde{x}_j\|_X^\alpha \gamma(\|x_j\|_X, \|\tilde{x}_j\|_X) \to 0 
\end{align*}
as $j \to \infty$ and this shows that $\lim_{j \to \infty}\tilde{F}(\tilde{x}_j)=y$.
Consequently, $F: X \to Y$ is well--defined.
If $x \in \tilde{X}$ we have $F(x)=\tilde{F}(x)$ by definition.
This shows that $F$ extends $\tilde{F}$ to all of $X$.
Now let $\varepsilon \in (0,1)$ be arbitrary and choose $x_0,x\in X$ with $\|x_0-x\|_X \leq \varepsilon$.
By construction of $F$, we can find $\tilde{x}_0,\tilde{x} \in \tilde{X}$ with $\|x_0-\tilde{x}_0\|_X \leq \varepsilon$, $\|x-\tilde{x}\|_X \leq \varepsilon$ such that $\|F(x_0)-\tilde{F}(\tilde{x}_0)\|_Y \leq \varepsilon$ and $\|F(x)-\tilde{F}(\tilde{x})\|_Y \leq \varepsilon$.
This implies
$$ \|\tilde{x}_0-\tilde{x}\|_X\leq \|\tilde{x}_0-x_0\|_X+\|x_0-x\|_X+\|x-\tilde{x}\|_X \leq 3 \varepsilon $$
and we conclude 
\begin{align*}
\|F(x_0)-F(x)\|_Y&\leq \|F(x_0)-\tilde{F}(\tilde{x}_0)\|_Y+\|\tilde{F}(\tilde{x}_0)-\tilde{F}(\tilde{x})\|_Y
+\|\tilde{F}(\tilde{x})-F(x)\|_Y \\
&\leq 2\varepsilon +\|\tilde{x}-\tilde{x}_0\|_X^\alpha \gamma(\|\tilde{x}\|_X, \|\tilde{x}_0\|_X)
\leq 2\varepsilon + (3\varepsilon)^\alpha \sup_{0 \leq s, t \leq \|x_0\|_X+4} \gamma(s,t)
\end{align*}
since 
\begin{align*} \|\tilde{x}\|_X &\leq \|x_0\|_X+\|\tilde{x}-x_0\|_X\leq \|x_0\|_X+\|\tilde{x}-\tilde{x}_0\|_X+\|\tilde{x}_0-x_0\|_X \\
&\leq \|x_0\|_X+4\varepsilon \leq \|x_0\|_X+4
\end{align*}
and
$$ \|\tilde{x}_0\|_X\leq \|x_0\|_X+\|\tilde{x}_0-x_0\|_X \leq \|x_0\|_X+\varepsilon \leq \|x_0\|_X+4 $$
for any $\varepsilon \in (0,1)$.
Consequently, we obtain
$$ \|F(x_0)-F(x)\|_Y \leq 2\varepsilon + C_{x_0}\varepsilon^\alpha $$
for a constant $C_{x_0}>0$ depending on $\|x_0\|_X$.
This constant is finite since the continuous function $\gamma$ attains its maximum on the compact set $[0,\|x_0\|_X+4] \times [0,\|x_0\|_X+4]$.
This shows that $F: X \to Y$ is continuous.
Uniqueness of $F$ follows from continuity and the fact that $F$ and $\tilde{F}$ coincide on a dense subset.
Finally, the claimed estimate for $F$ follows from the estimate for $\tilde{F}$ which carries over to the extension by continuity.
\end{proof}

We start the analysis of the nonlinearity by defining an auxiliary operator $A$ on $C_c^\infty(0,1]$ by setting
$$ (Au)(\rho):=\frac{1}{\rho^2}\int_0^\rho \rho' u(\rho')d\rho'. $$
Observe that the operator $A$ appears in the argument of the nonlinearity $N_T$ in \eqref{eq:main1stcss}.

\begin{lemma}
\label{lem:estA1}
 The operator $A$ satisfies the estimate
 $$ |Au(\rho)|\lesssim \sqrt{\rho}\|u'\|_{L^2(0,1)} $$
 for all $\rho \in (0,1)$ and all $u \in C_c^\infty(0,1]$.
\end{lemma}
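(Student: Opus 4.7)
The plan is to exploit the fact that $u$ vanishes near the origin (since $u \in C_c^\infty(0,1]$) in order to express $u$ as a primitive of $u'$ and then apply Cauchy--Schwarz. The factor $\sqrt\rho$ on the right-hand side should come out naturally from the explicit kernel that appears after interchanging the order of integration.

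Concretely, I would first write $u(\rho') = \int_0^{\rho'} u'(s)\,ds$ and substitute into the defining formula for $Au$. By Fubini,
\begin{align*}
\int_0^\rho \rho' u(\rho')\,d\rho'
&= \int_0^\rho \rho' \int_0^{\rho'} u'(s)\,ds\,d\rho'
= \int_0^\rho u'(s) \int_s^\rho \rho'\,d\rho'\,ds
= \tfrac{1}{2}\int_0^\rho (\rho^2-s^2)\, u'(s)\,ds,
\end{align*}
so that
$$ Au(\rho) = \frac{1}{2\rho^2}\int_0^\rho (\rho^2-s^2)\, u'(s)\,ds. $$

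Next I would apply Cauchy--Schwarz to this representation, extending the $L^2$-norm of $u'$ from $(0,\rho)$ to $(0,1)$. The estimate then reduces to bounding
$$ \int_0^\rho \left(\frac{\rho^2-s^2}{2\rho^2}\right)^2 ds, $$
and since the integrand is bounded by $\tfrac{1}{4}$ on $[0,\rho]$, this integral is at most $\rho/4$. Taking square roots yields $|Au(\rho)| \leq \tfrac{1}{2}\sqrt{\rho}\,\|u'\|_{L^2(0,1)}$, which is the desired bound.

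I do not anticipate any real obstacle here: the only mild point is that one must use $u(0)=0$ (which is automatic from the support condition) to write $u$ as a primitive of $u'$; once this is done, the explicit cancellation $(\rho^2-s^2)/\rho^2 \leq 1$ absorbs the singular factor $\rho^{-2}$ and produces the claimed gain of $\sqrt\rho$.
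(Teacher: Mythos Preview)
Your proof is correct. It differs from the paper's argument in a mild but genuine way: the paper writes $\rho' u(\rho')=\rho'^2\cdot \tfrac{u(\rho')}{\rho'}$, applies Cauchy--Schwarz directly to this splitting (the factor $\rho'^2$ produces $\rho^{5/2}$ after integration, which combines with the prefactor $\rho^{-2}$ to give $\sqrt{\rho}$), and then invokes Hardy's inequality to bound $\int_0^1 |u(\rho')|^2/\rho'^2\,d\rho'$ by $\|u'\|_{L^2}^2$. You instead substitute $u(\rho')=\int_0^{\rho'}u'(s)\,ds$, swap the order of integration to obtain the explicit kernel $(\rho^2-s^2)/(2\rho^2)$, and apply Cauchy--Schwarz to that. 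Your route avoids Hardy's inequality altogether and yields an explicit constant $\tfrac{1}{2}$; the paper's route is a one-liner once Hardy is available. Both are completely elementary and equally acceptable.
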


\begin{proof}
 By Cauchy--Schwarz and Hardy's inequality we obtain
\begin{align*}
 |Au(\rho)|&\leq \frac{1}{\rho^2}\int_0^\rho \rho'^2 \frac{|u(\rho')|}{\rho'}d\rho' 
 \leq \frac{1}{\rho^2} \left ( \int_0^\rho \rho'^4 d\rho' \right )^{1/2} \left (\int_0^\rho \frac{|u(\rho')|^2}{\rho'^2}d\rho' \right )^{1/2} \\
 &\lesssim \sqrt{\rho}\left ( \int_0^1 |u'(\rho)|^2 d\rho \right )^{1/2}
\end{align*}
for all $u \in C_c^\infty(0,1]$.
\end{proof}

Another useful observation is the following.

\begin{lemma}
 \label{lem:estA1a}
 The operator $A$ satisfies the estimates
 $$ \int_0^1 \frac{|Au(\rho)|^2}{\rho^2}d\rho \lesssim \int_0^1 |(Au)'(\rho)|^2 d\rho \lesssim 
\|u'\|_{L^2(0,1)}^2 $$
for all $u \in C^\infty_c(0,1]$.
\end{lemma}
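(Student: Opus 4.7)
The plan is to treat the two estimates separately, with the second requiring the real work.

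For the first inequality, I would simply observe that $Au \in C_c^\infty(0,1]$ whenever $u \in C_c^\infty(0,1]$: indeed $Au$ is clearly smooth on $(0,1]$, and if $u$ is supported in $[\delta,1]$ for some $\delta>0$, then $Au$ vanishes identically on $(0,\delta)$. Hence Hardy's inequality (the immediately preceding lemma) with $\alpha=2$ applies directly to $Au$, yielding
$$ \int_0^1 \frac{|Au(\rho)|^2}{\rho^2} d\rho \leq 4 \int_0^1 |(Au)'(\rho)|^2 d\rho. $$

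For the second inequality, the key computational trick is to integrate by parts inside the definition of $A$ \emph{before} differentiating. Since $u$ vanishes near $0$, the boundary term drops and we obtain
$$ \int_0^\rho \rho' u(\rho') d\rho' = \tfrac{\rho^2}{2} u(\rho) - \tfrac{1}{2}\int_0^\rho \rho'^2 u'(\rho') d\rho', $$
so that $Au(\rho) = \tfrac{1}{2}u(\rho) - \tfrac{1}{2\rho^2}\int_0^\rho \rho'^2 u'(\rho') d\rho'$. Differentiating, the two $u'(\rho)$ contributions cancel exactly, leaving the clean representation
$$ (Au)'(\rho) = \frac{1}{\rho^3}\int_0^\rho \rho'^2 u'(\rho') d\rho'. $$
In other words, $(Au)'$ is essentially a Hardy-type average of $u'$.

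From here the estimate is routine. I would apply Cauchy--Schwarz with the splitting $\rho'^2 = \rho'^{3/2} \cdot \rho'^{1/2}$ to get $|(Au)'(\rho)|^2 \leq \tfrac{1}{4\rho^2}\int_0^\rho \rho' |u'(\rho')|^2 d\rho'$, then integrate in $\rho$ and use Fubini:
$$ \int_0^1 |(Au)'(\rho)|^2 d\rho \leq \tfrac{1}{4}\int_0^1 \rho' |u'(\rho')|^2 \int_{\rho'}^1 \frac{d\rho}{\rho^2}\, d\rho' = \tfrac{1}{4}\int_0^1 (1-\rho')|u'(\rho')|^2\, d\rho' \leq \tfrac{1}{4}\|u'\|_{L^2(0,1)}^2. $$

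There is no real obstacle; the only step that requires insight is the integration by parts that produces the cancellation of the $u'(\rho)$ terms, without which the pointwise bound $|(Au)'(\rho)|^2 \lesssim \rho^{-1}\|u'\|_{L^2}^2$ one gets from a naive Cauchy--Schwarz on the original formula would produce a logarithmic divergence upon integration.
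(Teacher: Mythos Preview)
Your proof is correct. The first inequality is handled exactly as in the paper (Hardy applied to $Au$). For the second inequality, however, you take a genuinely different route.

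The paper simply differentiates $Au$ directly to get
\[
(Au)'(\rho)=-\frac{2}{\rho^3}\int_0^\rho \rho' u(\rho')\,d\rho'+\frac{u(\rho)}{\rho},
\]
and then estimates each piece in $L^2$ by repeated use of Hardy's inequality: one application with $\alpha=6$ (to the function $w(\rho)=\int_0^\rho \rho' u(\rho')\,d\rho'$, using $w'(\rho)=\rho u(\rho)$) controls the integral term by $\int_0^1 |u(\rho)|^2/\rho^2\,d\rho$, and a further application with $\alpha=2$ bounds this by $\|u'\|_{L^2}^2$. No cancellation is exploited; everything is absorbed by Hardy.

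Your approach instead performs the integration by parts inside $A$ to obtain the clean averaging representation $(Au)'(\rho)=\rho^{-3}\int_0^\rho \rho'^2 u'(\rho')\,d\rho'$, after which Cauchy--Schwarz and Fubini finish the job and even give the explicit constant $\tfrac14$. This is slick and avoids any further appeal to Hardy. Your closing remark, though, slightly overstates the necessity of the trick: the ``naive'' differentiation does \emph{not} force a logarithmic divergence once one estimates integrally via Hardy rather than via the pointwise bound $|(Au)'(\rho)|^2\lesssim \rho^{-1}\|u'\|_{L^2}^2$. So both arguments work; yours is more self-contained, while the paper's is a straight iteration of the one tool already on the table.
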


\begin{proof}
Note that $u \in C^\infty_c(0,1]$ implies $Au \in C^\infty_c(0,1]$.
Consequently, Hardy's inequality yields
\begin{align*}
 \int_0^1 \frac{|Au(\rho)|^2}{\rho^2}d\rho &\lesssim \int_0^1 |(Au)'(\rho)|^2 d\rho \\
&\lesssim 
\int_0^1 \frac{1}{\rho^6} \left |\int_0^\rho \rho' u(\rho')d\rho' \right |^2 d\rho+\int_0^1 
\frac{|u(\rho)|^2}{\rho^2} d\rho \\
&\lesssim \int_0^1 \frac{|u(\rho)|^2}{\rho^2}d\rho \\
&\lesssim \|u'\|_{L^2(0,1)}^2 
\end{align*}
for all $u \in C^\infty_c(0,1]$.
\end{proof}

From now on we restrict ourselves to real--valued functions.
For the following results recall that $\mc{H}=H_1 \times H_2$ where the respective norms $\|\cdot\|_1$, $\|\cdot\|_2$ on $H_1$, $H_2$ are given by
$$ \|u\|_1^2=\int_0^1 \frac{|u'(\rho)|^2}{\rho^2} d\rho, \quad \|u\|_2^2=\int_0^1 |u'(\rho)|^2 d\rho=\|u'\|_{L^2(0,1)}^2, $$
see Sec.~\ref{sec:oplin} and \cite{DSA}.
Note that the nonlinearity $N_T$ in \eqref{eq:main1stcss} appears in the first component and takes an argument from the second component. Thus, it is supposed to map from $H_2$ to $H_1$.
However, the nonlinearity is composed with the operator $A$ and Lemma \ref{lem:estA1} suggests to include a weighted $L^\infty$--piece in the norm.
Consequently, we view the nonlinearity as a mapping $X \to H_1$ where the Banach space $X$ is defined as the completion of $C^\infty_c(0,1]$ with respect to the norm
$$ \|u\|_X:=\|u\|_2+\sup_{\rho \in (0,1)}\left |\frac{u(\rho)}{\sqrt{\rho}} \right |. $$
Note, however, that this is just for notational convenience since, in fact, by Cauchy--Schwarz, we have
$$ \frac{|u(\rho)|}{\sqrt{\rho}}\leq \frac{1}{\sqrt{\rho}}\int_0^\rho |u'(\rho)|d\rho \leq \|u'\|_{L^2(0,1)} $$
for all $u \in C_c^\infty(0,1]$ and thus, the norms $\|\cdot\|_2$ and $\|\cdot\|_X$ are equivalent.
According to Lemmas \ref{lem:estA1} and \ref{lem:estA1a}, the operator $A$ extends to a bounded linear operator $A: H_2 \to X$.

For a suitable function $F$ of two variables, we define the \emph{composition} or \emph{Nemitsky operator} $\hat{F}$, acting on $C^\infty_c(0,1]$, by 
$$ \hat{F}(u)(\rho):=F(u(\rho),\rho). $$
The following two results give sufficient conditions on $F$ such that the corresponding Nemitsky operator is continuous from $X$ to $H_1$.
 
\begin{lemma}
\label{lem:Nemcont}
  Let $F: \mathbb{R} \times [0,1] \to \mathbb{R}$ be twice continuously differentiable in both variables
and assume that $\partial_1 F(0,\rho)=0$ for all $\rho \in [0,1]$.
  Suppose further that there exists a constant $C>0$ such that
  $$ |\partial_{1j}F(x,\rho)|\leq C(|x|+\rho) $$
  for all $(x,\rho) \in \mathbb{R}\times [0,1]$ and $j=1,2$.
  Then we have $\hat{F}(u) \in H_1$ for any $u \in C^\infty_c(0,1]$ and there exists a continuous function $\gamma: [0,\infty) \times [0,\infty) \to [0,\infty)$ such that $\hat{F}$ satisfies the estimate
  $$ \|\hat{F}(u)-\hat{F}(v)\|_1 \leq \|u-v\|_X \gamma(\|u\|_X, \|v\|_X) $$
  for all $u,v \in C^\infty_c(0,1]$.
  As a consequence, $\hat{F}$ uniquely extends to a continuous map $\hat{F}: X \to H_1$ and the above estimate remains valid for all $u,v \in X$.
\end{lemma}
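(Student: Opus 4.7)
The plan is to bound $(\hat F(u)-\hat F(v))'$ pointwise and then use the embedding $|w(\rho)|\le\|w\|_X\sqrt{\rho}$ (which is immediate from the definition of $\|\cdot\|_X$) to convert every factor of $|u|$, $|v|$, or $|u-v|$ into a power of $\sqrt{\rho}$. The vanishing hypothesis $\partial_1 F(0,\rho)=0$ is what produces enough such factors to cancel the singular weight $\rho^{-2}$ appearing in $\|\cdot\|_1$.

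Concretely, for $u,v\in C^\infty_c(0,1]$ I differentiate classically and split the result as
\begin{align*}
(\hat F(u)-\hat F(v))'(\rho)&=\bigl[\partial_1 F(u,\rho)-\partial_1 F(v,\rho)\bigr]u'(\rho) \\
&\quad +\partial_1 F(v,\rho)\bigl[u'(\rho)-v'(\rho)\bigr] \\
&\quad +\bigl[\partial_2 F(u,\rho)-\partial_2 F(v,\rho)\bigr],
\end{align*}
where I abbreviate $u=u(\rho)$, $v=v(\rho)$. The fundamental theorem of calculus in the first argument of $\partial_j F$, combined with $|\partial_{1j}F(x,\rho)|\le C(|x|+\rho)$ and (for the middle term) $\partial_1 F(0,\rho)=0$, yields the pointwise bounds
\begin{align*}
|\partial_1 F(u,\rho)-\partial_1 F(v,\rho)|&\le C|u-v|(|u|+|v|+\rho), \\
|\partial_1 F(v,\rho)|&\le C|v|(|v|+\rho), \\
|\partial_2 F(u,\rho)-\partial_2 F(v,\rho)|&\le C|u-v|(|u|+|v|+\rho).
\end{align*}
Inserting the estimates $|u|+|v|+\rho\le(1+\|u\|_X+\|v\|_X)\sqrt{\rho}$ and $|u-v|\le\|u-v\|_X\sqrt{\rho}$ (valid on $(0,1]$ because $\rho\le\sqrt{\rho}$ there), each of the three terms above acquires a factor $\rho$ that exactly absorbs the weight $\rho^{-2}$. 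Squaring, integrating over $(0,1)$, and using $\|u'\|_{L^2(0,1)}\le\|u\|_X$ and $\|u'-v'\|_{L^2(0,1)}\le\|u-v\|_X$ in the first two terms (the third is already independent of $\rho$ after cancellation), each contribution is controlled by $\|u-v\|_X^2$ times a polynomial in $\|u\|_X$ and $\|v\|_X$. Summing the three pieces and taking a square root furnishes the Lipschitz estimate with some continuous $\gamma:[0,\infty)^2\to[0,\infty)$.

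To see $\hat F(u)\in H_1$, one specializes the estimate to $v=0$, which gives $\|\hat F(u)-\hat F(0)\|_1<\infty$; since $\hat F(0)(\rho)=F(0,\rho)$ depends on $\rho$ alone, $\hat F(u)$ defines an element of $H_1$ (the constant piece is absorbed by the completion, which is insensitive to additive constants). The continuous extension $\hat F:X\to H_1$ is then an immediate application of Lemma~\ref{lem:context} with $\tilde X=C^\infty_c(0,1]$, $Y=H_1$, $\alpha=1$, and the Lipschitz estimate carries over by that lemma. The only step requiring real care is the decomposition into the three terms above: one has to isolate $\partial_1 F(v,\rho)$ acting on the \emph{increment} $u'-v'$, so that the vanishing hypothesis can be used to generate the needed $\sqrt{\rho}$ factors. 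Once that is done, the $X$-norm estimates match the singular weight automatically and there is no further subtlety.
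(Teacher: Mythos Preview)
Your argument is correct and follows essentially the same route as the paper's: decompose $(\hat F(u)-\hat F(v))'$ into three pieces, derive the pointwise bounds on $\partial_j F$ via the fundamental theorem of calculus and the hypothesis on $\partial_{1j}F$, and then use $|w(\rho)|\le\|w\|_X\sqrt\rho$ to absorb the singular weight $\rho^{-2}$. The only cosmetic difference is that the paper writes $\partial_1 F(u)\,u'-\partial_1 F(v)\,v'=\partial_1 F(u)(u'-v')+[\partial_1 F(u)-\partial_1 F(v)]v'$, i.e., the same splitting with the roles of $u$ and $v$ interchanged.

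One sentence to tighten: your parenthetical ``the constant piece is absorbed by the completion, which is insensitive to additive constants'' is not accurate as written, since $\hat F(0)(\rho)=F(0,\rho)$ is in general a nonconstant function of $\rho$, and $\|\cdot\|_1$ is a genuine norm on $H_1$ (not a seminorm modulo constants). The paper's proof does not address the membership $\hat F(u)\in H_1$ any more explicitly; in the actual application one has $N(0,\rho)\equiv 0$, whence $\hat N(0)=0$ and the Lipschitz bound with $v=0$ immediately gives $\hat N(u)\in H_1$. If you want the lemma to stand on its own in full generality, the clean fix is to add the hypothesis $F(0,\rho)=0$ (harmless for the application), after which your argument with $v=0$ goes through verbatim.
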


\begin{proof}
 Let $u,v \in C^\infty_c(0,1]$.
 By definition of $\hat{F}$ and $\|\cdot\|_1$, we have
 \begin{align}
 \label{eq:proofNemcont}
  \|\hat{F}(u)-\hat{F}(v)\|_1^2 &\lesssim \int_0^1 \frac{|\partial_1 F(u(\rho),\rho)u'(\rho)-\partial_1 F(v(\rho),\rho)v'(\rho)|^2}{\rho^2}d\rho \\
  &\quad + \int_0^1 \frac{|\partial_2 F(u(\rho),\rho)
-\partial_2 F(v(\rho),\rho)|^2}{\rho^2}d\rho \nonumber \\
  &\lesssim \int_0^1 \frac{\left |\partial_1 F(u(\rho),\rho)
\left [u'(\rho)-v'(\rho) \right ] \right |^2}{\rho^2}d\rho \nonumber \\
&\quad +\int_0^1 \frac{\left | \left [\partial_1 F(u(\rho),\rho)-\partial_1 F(v(\rho),\rho) \right ] v'(\rho) \right |^2}{\rho^2}d\rho \nonumber \\
&\quad + \int_0^1 \frac{|\partial_2 F(u(\rho),\rho)
-\partial_2 F(v(\rho),\rho)|^2}{\rho^2}d\rho=:A_1+A_2+A_3 \nonumber 
 \end{align}
 Now we put the terms containing $u',v'$ into $L^2$ and the rest into $L^\infty$, i.e.,
 $$ A_1\leq \sup_{\rho \in (0,1)}\left | \frac{\partial_1 F(u(\rho),\rho)}{\rho} \right |^2 \int_0^1 |u'(\rho)-v'(\rho)|^2 d\rho $$
 and, analogously,
 \begin{align*} 
A_2 &\leq \sup_{\rho \in (0,1)}\left | \frac{\partial_1 F(u(\rho),\rho)-\partial_1 F(v(\rho),\rho)}{\rho} \right |^2 \int_0^1 |v'(\rho)|^2 d\rho \\
A_3 &\leq \sup_{\rho \in (0,1)}\left | \frac{\partial_2 F(u(\rho),\rho)
-\partial_2 F(v(\rho),\rho)}{\rho} \right |^2.
 \end{align*}
Using the assumption on $\partial_{1j}F$ and applying the fundamental theorem of calculus we obtain
\begin{align*} \left | \partial_{j}F(x,\rho)-\partial_{j}F(y,\rho) \right | 
&=\left |\int_0^1 \partial_{j1}F(y+t(x-y),\rho)(x-y)dt \right | \\
&\leq C|x-y| \int_0^1 \left (|y+t(x-y)|+\rho \right )dt \\
&\lesssim |x-y| \left ( |x|+|y|+\rho \right )
\end{align*}
 for all $x,y \in \mathbb{R}$, all $\rho \in [0,1]$ and $j=1,2$.
In particular, this implies
\begin{equation} 
\label{eq:proofNemest}
|\partial_1 F(x,\rho)|\lesssim |x| \left (|x|+\rho \right )  
\end{equation}
since $\partial_1 F(0,\rho)=0$ for all $\rho \in [0,1]$ by assumption.
Consequently, we obtain
\begin{align*} 
A_1 &\lesssim \sup_{\rho \in (0,1)}\left | \frac{|u(\rho)|\left (|u(\rho)|+
\rho \right )}{\rho} \right |^2 \|u-v\|_2^2 \\
&\lesssim \sup_{\rho \in (0,1)}\left |\frac{u(\rho)}{\sqrt{\rho}} \right |^2 \sup_{\rho \in (0,1)} \left | \frac{|u(\rho)|+\rho}{\sqrt{\rho}} \right |^2 \|u-v\|_2^2 \\
&\lesssim \|u\|_X^2 (\|u\|_X^2+1)\|u-v\|_X^2
\end{align*}
as well as
\begin{align*}
 A_2 & \lesssim \sup_{\rho \in (0,1)}\left | \frac{|u(\rho)-v(\rho)|(|u(\rho)|+|v(\rho)|+\rho)}{\rho} \right |^2 \|v\|_2^2 \\
 &\lesssim \sup_{\rho \in (0,1)}\left |\frac{|u(\rho)-v(\rho)|}{\sqrt{\rho}} \right |^2
 \sup_{\rho \in (0,1)}\left |\frac{|u(\rho)|+|v(\rho)|+\rho}{\sqrt{\rho}} \right |^2 \|v\|_2^2 \\
 &\lesssim \|u-v\|_X^2 \left (\|u\|_X^2+\|v\|_X^2+1 \right )\|v\|_X^2
\end{align*}
and, analogously,
$$ A_3 \lesssim \|u-v\|_X^2 \left (\|u\|_X^2+\|v\|_X^2+1 \right ). $$
Putting everything together we arrive at the claimed estimate.
The existence of the extension with the stated properties follows from Lemma \ref{lem:context}.
\end{proof}

Next, we turn to the question of differentiability of the Nemitsky operator $\hat{F}$.
Recall that the G\^ ateaux derivative $D_G\hat{F}(u): X \to H_1$ of $\hat{F}$ at $u \in X$ is defined as 
$$ D_G \hat{F}(u)v:=\lim_{h \to 0}\frac{\hat{F}(u+hv)-\hat{F}(u)}{h} $$
provided the right--hand side exists and defines a bounded linear operator from $X$ to $H_1$.
We give sufficient conditions for the G\^ateaux derivative to exist.
 
\begin{lemma}
\label{lem:NemGdiff}
 Let $F \in C^3(\mathbb{R} \times [0,1])$ satisfy the assumptions from Lemma \ref{lem:Nemcont}.
 Assume further that there exists a constant $C>0$ such that
 $$ |\partial_{11j} F(x,\rho)|\leq C $$
 for all $(x,\rho) \in \mathbb{R} \times [0,1]$ and $j=1,2$.
 Then, at every $u \in C^\infty_c(0,1]$, the Nemitsky operator $\hat{F}: X \to H_1$ 
is G\^ateaux differentiable and its G\^ateaux derivative at $u$ applied to $v \in X$ is given by
 $$ [D_G \hat{F}(u)v](\rho)=\partial_1 F(u(\rho),\rho)v(\rho). $$
\end{lemma}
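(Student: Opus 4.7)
The plan is to establish Gâteaux differentiability at a fixed $u \in C^\infty_c(0,1]$ in three steps: first show the candidate $T_u v := \partial_1 F(u(\cdot),\cdot)\,v(\cdot)$ defines a bounded linear map $X \to H_1$, second verify the limit defining the Gâteaux derivative on $v \in C^\infty_c(0,1]$ via a Taylor expansion, and third extend to all $v \in X$ by density. For the boundedness of $T_u$, I would expand $(T_u v)'$ via the product rule into three summands involving $\partial_{11}F(u,\rho)\,u'\,v$, $\partial_{12}F(u,\rho)\,v$, and $\partial_1 F(u,\rho)\,v'$. Using the hypothesis $|\partial_{1j}F(x,\rho)| \leq C(|x|+\rho)$, the consequent pointwise bound $|\partial_1 F(x,\rho)| \lesssim |x|(|x|+\rho)$ derived in Lemma \ref{lem:Nemcont}, the fact that $u \in C^\infty_c(0,1]$ vanishes in a neighborhood of $\rho = 0$, and $|v(\rho)|/\sqrt{\rho} \leq \|v\|_X$, each of the three contributions to $\int_0^1 |(T_u v)'|^2 \rho^{-2}\,d\rho$ reduces, after a suitable $L^\infty \times L^2$ split, to a constant multiple of $\|v\|_X^2$.

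For $v \in C^\infty_c(0,1]$ and $h \neq 0$, Taylor's theorem with integral remainder applied to $F(\cdot,\rho)$ yields
\begin{equation*}
\frac{F(u(\rho)+hv(\rho),\rho) - F(u(\rho),\rho)}{h} - v(\rho)\,\partial_1 F(u(\rho),\rho) = h\, v(\rho)^2\, K_h(\rho),
\end{equation*}
with $K_h(\rho) := \int_0^1 (1-\sigma)\,\partial_{11}F(u(\rho)+\sigma h v(\rho),\rho)\,d\sigma$, so it suffices to show $\|h\, v^2 K_h\|_1 \to 0$. Differentiating under the integral and applying the product rule gives $(v^2 K_h)' = 2 v v' K_h + v^2 K_h'$, and the hypotheses furnish $|K_h(\rho)| \lesssim |u|+|hv|+\rho$ as well as $|K_h'(\rho)| \lesssim |u'|+|hv'|+1$. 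Each of the resulting contributions to $|(v^2 K_h)'|^2/\rho^2$ is bounded, uniformly in $|h| \leq 1$, by an integrable function of $\rho$ by using $|v|^2/\rho \leq \|v\|_X^2$ (pointwise), Hardy's inequality, and the compact support of $u$; the overall factor of $h^2$ then gives $\|h v^2 K_h\|_1 = O(h) \to 0$.

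For general $v \in X$, I would approximate by $v_n \in C^\infty_c(0,1]$ with $v_n \to v$ in $\|\cdot\|_X$ and write
\begin{equation*}
\frac{\hat{F}(u+hv) - \hat{F}(u)}{h} - T_u v = \frac{\hat{F}(u+hv) - \hat{F}(u+hv_n)}{h} + \left(\frac{\hat{F}(u+hv_n) - \hat{F}(u)}{h} - T_u v_n\right) + T_u(v_n - v).
\end{equation*}
The first summand is controlled by $\|v - v_n\|_X\,\gamma(\|u+hv\|_X, \|u+hv_n\|_X)$ via the Lipschitz estimate of Lemma \ref{lem:Nemcont}, with $\gamma$ uniformly bounded for $|h| \leq 1$; the second tends to $0$ as $h \to 0$ for each fixed $n$ by the previous step; the third is bounded by $\|T_u\|\,\|v_n - v\|_X$ using the boundedness from the first step. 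A standard $\varepsilon/3$ argument (choose $n$ large, then $h$ small) concludes.

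The principal technical obstacle, as already in Lemma \ref{lem:Nemcont}, is the careful bookkeeping of the $\rho^{-2}$ weight across each summand: one must exploit $\partial_1 F(0,\rho) = 0$ to manufacture the extra factor $|x|$ in the bound for $\partial_1 F$ that compensates the $\rho^{-1}$ singularity at the origin, and use the compact support of $u$ in $(0,1]$ to kill the $u'$-contribution near $\rho = 0$; without the latter, terms of the form $|v|^2 |u'|^2/\rho^2$ would be genuinely delicate.
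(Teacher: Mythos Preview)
Your proposal is correct and follows the same three–step architecture as the paper: bound the candidate $T_u$, verify the Gâteaux limit for $v\in C^\infty_c(0,1]$ via an integral remainder, then pass to general $v\in X$ by density. The paper writes the remainder as $\frac{v}{h}\int_0^h[\partial_1F(u+\tilde h v,\rho)-\partial_1F(u,\rho)]\,d\tilde h$ rather than your second--order Taylor form $hv^2K_h$, and it passes to the limit in the closed estimate $\|g_h\|_1\le h\,\gamma(\|u\|_X,\|v\|_X)$ rather than via $\varepsilon/3$; these are cosmetic differences.

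One correction to your closing paragraph: the compact support of $u$ near $\rho=0$ is \emph{not} needed, and the paper does not use it. For the term $\partial_{11}F(u,\rho)\,u'v$ one retains the hypothesis $|\partial_{11}F(u,\rho)|\lesssim |u|+\rho$ and splits
\[
\int_0^1\frac{|\partial_{11}F(u,\rho)\,u'v|^2}{\rho^2}\,d\rho
\le \sup_{\rho}\Bigl|\tfrac{(|u(\rho)|+\rho)|v(\rho)|}{\rho}\Bigr|^2\int_0^1|u'|^2
\le (\|u\|_X+1)^2\|v\|_X^2\,\|u\|_2^2,
\]
so no integral of the form $\int|u'|^2/\rho$ ever appears. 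The analogous splits handle your $v^2K_h'$ term with $|v|^4/\rho^2\le\|v\|_X^4$ and $|u'|\in L^2$. Obtaining bounds that depend only on $\|u\|_X$ (not on the support of $u$) is exactly what allows the subsequent lemma to extend $D_G\hat F$ continuously to all of $X$.
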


\begin{proof}
 According to Lemma \ref{lem:Nemcont}, $\hat{F}$ extends to a continuous operator $\hat{F}: X \to H_1$.
 For $u,v \in C^\infty_c(0,1]$ define
 $$ [\tilde{D}_G \hat{F}(u)v](\rho):=\partial_1 F(u(\rho),\rho)v(\rho). $$
 Inserting the definition of $\|\cdot\|_1$ yields
 \begin{align*}
  \|\tilde{D}_G \hat{F}(u)v\|_1^2&\lesssim \int_0^1 \frac{|\partial_{11}F(u(\rho),\rho)u'(\rho)v(\rho)|^2}{\rho^2}d\rho 
  +\int_0^1 \frac{|\partial_{12}F(u(\rho),\rho)v(\rho)|^2}{\rho^2}d\rho \\
  &\quad +\int_0^1 \frac{|\partial_1 F(u(\rho),\rho)v'(\rho)|^2}{\rho^2}d\rho \\
  &\lesssim \sup_{\rho \in (0,1)}\left |\frac{\partial_{11} F(u(\rho),\rho)v(\rho)}{\rho} \right |^2
  \int_0^1 |u'(\rho)|^2 d\rho
  +\sup_{\rho \in (0,1)}\left | \frac{\partial_{12}F(u(\rho),\rho)v(\rho)}{\rho} \right |^2 \\
  &\quad +\sup_{\rho \in (0,1)}\left |\frac{\partial_1 F(u(\rho),\rho)}{\rho} \right |^2 \int_0^1 |v'(\rho)|^2 d\rho
 \end{align*}
and by assumption we have
$$ \sup_{\rho \in (0,1)} \left |\frac{\partial_{1j}F(u(\rho),\rho)v(\rho)}{\rho} \right |^2 \lesssim 
 \sup_{\rho \in (0,1)}\left |\frac{(|u(\rho)|+\rho)|v(\rho)|}{\rho} \right |^2
 \lesssim \left (\|u\|_X^2+1 \right )\|v\|_X^2 $$
 for $j=1,2$.
 Furthermore, Eq.~\eqref{eq:proofNemest} yields
 $$ \sup_{\rho \in (0,1)}\left |\frac{\partial_1 F(u(\rho),\rho)}{\rho} \right |^2 \lesssim 
\sup_{\rho \in (0,1)}\left |\frac{|u(\rho)|(|u(\rho)|+\rho)}{\rho} \right |^2 
\lesssim \|u\|_X^2 \left (\|u\|_X^2+1 \right ) $$
and we infer
$$ \|\tilde{D}_G \hat{F}(u)v\|_1 \leq C_u \|v\|_X $$
for all $v \in C^\infty_c(0,1]$ where $C_u>0$ is a constant that depends on $\|u\|_X$.
Since $C^\infty_c(0,1]$ is dense in $X$, $\tilde{D}_G \hat{F}(u)$ extends to a bounded linear operator from $X$ to $H_1$.
Recall that, for general $v \in X$, the extension is defined by 
$$\tilde{D}_G \hat{F}(u)v=\lim_{j \to \infty}\tilde{D}_G \hat{F}(u)v_j $$
where $(v_j) \subset C^\infty_c(0,1]$ is an arbitrary sequence with $v_j \to v$ in $X$ and the limit is taken in $H_1$.
Since convergence in $H_1$ or $X$ implies pointwise convergence (see Lemma \ref{lem:Hcont}), the representation
$$ [\tilde{D}_G \hat{F}(u)v](\rho)=\partial_1 F(u(\rho),\rho)v(\rho) $$
remains valid for general $v \in X$.
We claim that $\tilde{D}_G \hat{F}(u)$ is the G\^ateaux derivative of $\hat{F}$ at $u$.
In order to prove this, we have to show that
$$ \lim_{h \to 0} \left \|\frac{\hat{F}(u+hv)-\hat{F}(u)}{h}-\tilde{D}_G \hat{F}(u)v \right \|_1=0 $$
for any $v \in X$.

Assume for the moment that $v \in C^\infty_c(0,1]$.
By applying the fundamental theorem of calculus we obtain
$$ \hat{F}(u+hv)(\rho)-\hat{F}(u)(\rho)=F(u(\rho)+hv(\rho), \rho)-F(u(\rho),\rho)
=\int_0^h \partial_1 F(u(\rho)+\tilde{h}v(\rho),\rho)v(\rho)d\tilde{h} $$
and this implies
\begin{align*} 
g_h(\rho)&:=\frac{\hat{F}(u+hv)(\rho)-\hat{F}(u)(\rho)}{h}-[\tilde{D}_G \hat{F}(u)v](\rho) \\
&=\frac{v(\rho)}{h}\int_0^h \left [ \partial_1 F(u(\rho)+\tilde{h}v(\rho),\rho)-\partial_1 F(u(\rho),\rho) \right ] d\tilde{h}.
\end{align*}
Now observe that
\begin{align*} &\frac{d}{d\rho}\int_0^h \left [ \partial_1 F(u(\rho)+\tilde{h}v(\rho),\rho)-\partial_1 F(u(\rho),\rho) \right ] d\tilde{h} \\
&=\int_0^h \partial_\rho \left [ \partial_1 F(u(\rho)+\tilde{h}v(\rho),\rho)-\partial_1 F(u(\rho),\rho) \right ] d\tilde{h} 
\end{align*}
for all $h \in [0,h_0]$
by dominated convergence since the function 
$$ (\tilde{h},\rho)\mapsto \partial_1 F(u(\rho)+\tilde{h}v(\rho),\rho)-\partial_1 F(u(\rho),\rho) $$
belongs to $C^1([0,h_0] \times [0,1])$ for some $h_0>0$.
Consequently, we infer
\begin{align*}
|g_h'(\rho)|&\leq |v'(\rho)|\sup_{\tilde{h} \in (0,h)}\left | 
\partial_1 F(u(\rho)+\tilde{h}v(\rho), \rho)-\partial_1 F(u(\rho),\rho) \right | \\
&\quad +|v(\rho)||u'(\rho)|\sup_{\tilde{h} \in (0,h)}\left |\partial_{11}F(u(\rho)+\tilde{h}v(\rho),\rho)-\partial_{11} F(u(\rho),\rho) \right | \\
&\quad + |v(\rho)|\sup_{\tilde{h} \in (0,h)}\left | \partial_{12}F(u(\rho)+\tilde{h}v(\rho),\rho)-\partial_{12}F(u(\rho),\rho) \right | \\
&\quad + |v'(\rho)||v(\rho)|\sup_{\tilde{h} \in (0,h)} \left |\tilde{h} \partial_{11}F(u(\rho)+\tilde{h}v(\rho),\rho) \right |.
\end{align*}
Note that the partial derivatives of $F$ commute since $F$ is three--times continuously differentiable. 
Furthermore, by the assumption on $\partial_{11j}F$, we have
\begin{align*} 
&\sup_{\tilde{h} \in (0,h)} \left |
\partial_{j1}F(x+\tilde{h}y,\rho)-\partial_{j1} F(x,\rho) \right | \\
&=\sup_{\tilde{h} \in (0,h)} \left | \int_0^{\tilde{h}} \partial_{j11}
F(x+h_1 y,\rho)y\:dh_1 \right | \\
&\leq C h |y|
\end{align*}
for $j=1,2$ and, from the proof of Lemma \ref{lem:Nemcont},
\begin{align*} 
\sup_{\tilde{h} \in (0,h)}\left |\partial_1 F(x+\tilde{h}y,\rho)-\partial_1 F(x,\rho) \right | 
&\leq \sup_{\tilde{h} \in (0,h)} \tilde{h}|y|\left (|x|+|x+\tilde{h}y|+\rho \right ) \\
&\lesssim h |y| \left (|x|+|y|+\rho \right ) 
\end{align*}
for all $x,y \in \mathbb{R}$, all $\rho \in [0,1]$ and $h \in [0,h_0]$.
Also, by the assumption on $\partial_{11}F$, we have
$$ \sup_{\tilde{h}\in (0,h)} \left |\tilde{h} \partial_{11}F(x+\tilde{h}y,\rho) \right |
\lesssim \sup_{\tilde{h}\in (0,h)} \tilde{h}\left (|x+\tilde{h}y|+\rho \right )\leq h \left (|x|+|y|
+\rho \right ) $$
in the above stated domains for $x,y,\rho$ and $h$.
Consequently, we obtain
\begin{align*}
 \|g_h\|_1^2&=\int_0^1 \frac{|g_h'(\rho)|^2}{\rho^2}d\rho \\
 &\lesssim h^2 \sup_{\rho \in (0,1)}\left |\frac{|v(\rho)|\left (|u(\rho)|+|v(\rho)|+\rho \right )}{\rho} \right |^2 \int_0^1 |v'(\rho)|^2 d\rho 
 + h^2 \sup_{\rho \in (0,1)} \left | \frac{|v(\rho)|^2}{\rho} \right |^2 \int_0^1 |u'(\rho)|^2 d\rho \\
 &\quad +h^2 \sup_{\rho \in (0,1)}\left | \frac{|v(\rho)|^2}{\rho} \right |^2
 +h^2 \sup_{\rho \in (0,1)} \left |\frac{|v(\rho)|\left (|u(\rho)|+|v(\rho)|+\rho\right )}{\rho} \right |^2 \int_0^1 |v'(\rho)|^2 d\rho
\end{align*}
and this shows that
$\|g_h\|_1\leq h\gamma(\|u\|_X, \|v\|_X)$
for a suitable continuous function $\gamma: [0,\infty) \times [0,\infty) \to [0,\infty)$.
By Lemma \ref{lem:Nemcont}, we know that $\hat{F}$ extends to a continuous map from $X$ to $H_1$.
Now let $v \in X$ be arbitrary and choose a sequence $(v_j) \subset C^\infty_c(0,1]$ with $v_j \to v$ in $X$.
By the continuity of $\hat{F}$, this implies $\hat{F}(u+hv_j) \to \hat{F}(u+hv)$ in $H_1$ for any $u \in C^\infty(0,1]$ and $h \in (0,h_0)$.
We conclude that
\begin{align*} 
\left \|\frac{\hat{F}(u+hv)-\hat{F}(u)}{h}-\tilde{D}_G \hat{F}(u)v \right \|_1 &=
\lim_{j \to \infty} \left \|\frac{\hat{F}(u+hv_j)-\hat{F}(u)}{h}-\tilde{D}_G \hat{F}(u)v_j \right \|_1 \\
&\leq h\lim_{j\to \infty}\gamma(\|u\|_X, \|v_j\|_X)=h\gamma(\|u\|_X, \|v\|_X)
\end{align*}
by the continuity of $\tilde{D}_G \hat{F}(u)$ and $\gamma$.
This shows that, for any $v \in X$, we have
\begin{equation} 
\label{eq:proofNemGdiff}
\left \|\frac{\hat{F}(u+hv)-\hat{F}(u)}{h}-\tilde{D}_G \hat{F}(u)v \right \|_1 \leq h\gamma(\|u\|_X, \|v\|_X) \to 0 
\end{equation}
as $h \to 0$
and therefore, $\tilde{D}_G\hat{F}(u)=D_G \hat{F}(u)$, the G\^ateaux derivative of $\hat{F}$ at $u \in C^\infty_c(0,1]$. 
\end{proof}

Finally, we show that the G\^ateaux derivative is in fact a Fr\'echet derivative.

\begin{lemma}
 \label{lem:NemFdiff}
 Let $F$ satisfy the assumptions from Lemma \ref{lem:NemGdiff}.
 Then $\hat{F}: X \to H_1$ is G\^ateaux differentiable at every $u \in X$ and the G\^ateaux derivative $D_G \hat{F}: X \to \mc{B}(X,H_1)$ is continuous and satisfies the estimate
$$ \|D_G \hat{F}(u)-D_G \hat{F}(\tilde{u})\|_{\mc{B}(X,H_1)}\leq \|u-\tilde{u}\|_X\; \gamma(\|u\|_X, \|\tilde{u}\|_X ) $$
for all $u, \tilde{u} \in X$ where $\gamma: [0,\infty) \times [0,\infty) \to [0,\infty)$ is a suitable continuous function.
 As a consequence, $\hat{F}$ is continuously Fr\'echet differentiable and $D\hat{F}=D_G \hat{F}$.
\end{lemma}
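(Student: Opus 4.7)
My plan is to define a candidate derivative $T(u)\in\mc{B}(X,H_1)$ for every $u\in X$ by the natural pointwise formula, prove Lipschitz continuity of $u\mapsto T(u)$, extend Gâteaux differentiability from $C^\infty_c(0,1]$ to all of $X$ by a density argument, and finally upgrade to Fréchet differentiability using the classical principle that a continuous Gâteaux derivative is a Fréchet derivative. The bound $|u(\rho)|\leq\sqrt{\rho}\,\|u\|_X$ shows that every $u\in X$ is continuous on $[0,1]$, so
$$(T(u)v)(\rho):=\partial_1 F(u(\rho),\rho)\,v(\rho)$$
makes sense for all $v\in X$, and the calculation carried out inside the proof of Lemma \ref{lem:NemGdiff} (which only uses the pointwise bounds on $F$, not smoothness of $u$) shows $T(u)\in\mc{B}(X,H_1)$ with norm $\lesssim(1+\|u\|_X^2)$.

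The first substantive step is the Lipschitz estimate for $T$. I would write
$$[(T(u)-T(\tilde u))v](\rho)=[\partial_1 F(u(\rho),\rho)-\partial_1 F(\tilde u(\rho),\rho)]\,v(\rho),$$
expand its $\rho$-derivative using the product and chain rules, and in each resulting term place the factor carrying a derivative of $u$, $\tilde u$, or $v$ into $L^2(0,1)$, while estimating the remaining factor in the $\sqrt{\rho}$-weighted $L^\infty$ norm built into $\|\cdot\|_X$. The pointwise bounds needed are $|\partial_1 F(x,\rho)-\partial_1 F(y,\rho)|\lesssim|x-y|(|x|+|y|+\rho)$ (derived inside the proof of Lemma \ref{lem:Nemcont}) together with $|\partial_{1j}F(x,\rho)-\partial_{1j}F(y,\rho)|\leq C|x-y|$ for $j=1,2$, which follows from the hypothesis $|\partial_{11j}F|\leq C$. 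These combine to give
$$\|T(u)-T(\tilde u)\|_{\mc{B}(X,H_1)}\leq\|u-\tilde u\|_X\,\gamma(\|u\|_X,\|\tilde u\|_X)$$
with a continuous $\gamma$, which in particular gives continuity of $T:X\to\mc{B}(X,H_1)$.

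To extend Gâteaux differentiability from $C^\infty_c(0,1]$ to all of $X$, I fix $u\in X$ and choose $(u_j)\subset C^\infty_c(0,1]$ with $u_j\to u$ in $X$. Lemma \ref{lem:NemGdiff}, specifically \eqref{eq:proofNemGdiff}, supplies
$$\Bigl\|\tfrac{\hat F(u_j+hv)-\hat F(u_j)}{h}-T(u_j)v\Bigr\|_1\leq h\,\gamma(\|u_j\|_X,\|v\|_X)$$
for all $v\in X$ and sufficiently small $h$. Passing $j\to\infty$, the continuity of $\hat F:X\to H_1$ (Lemma \ref{lem:Nemcont}), the continuity of $T$ established above, and the continuity of $\gamma$ produce the identical inequality with $u_j$ replaced by $u$. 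Letting $h\to0$ yields Gâteaux differentiability at $u$ with $D_G\hat F(u)=T(u)$.

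For the Fréchet conclusion, continuity of $D_G\hat F$ guarantees that $t\mapsto\hat F(u+tw)$ is $C^1([0,1];H_1)$, so the Banach-valued fundamental theorem of calculus gives
$$\hat F(u+w)-\hat F(u)-D_G\hat F(u)w=\int_0^1[D_G\hat F(u+tw)-D_G\hat F(u)]w\,dt,$$
whose $H_1$-norm is bounded by $\|w\|_X^2\sup_{t\in[0,1]}\gamma(\|u+tw\|_X,\|u\|_X)=o(\|w\|_X)$ as $\|w\|_X\to 0$. The main technical obstacle is the Lipschitz estimate for $T$, in which the product-and-chain-rule expansion must be arranged so that every derivative of $u$, $\tilde u$, or $v$ appears as the $L^2$ factor in an $L^2$-$L^\infty$ pairing; everything else is a routine density argument or a standard Banach-space calculus manipulation.
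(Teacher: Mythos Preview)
Your proposal is correct and follows essentially the same route as the paper: establish the Lipschitz estimate for the candidate derivative via the same $L^2$--$L^\infty$ splitting and the same pointwise bounds on $\partial_1 F$ and $\partial_{1j}F$, pass the inequality \eqref{eq:proofNemGdiff} to the limit along $u_j\to u$ to obtain G\^ateaux differentiability at every $u\in X$, and then upgrade to Fr\'echet. The only organizational difference is that the paper first proves the Lipschitz estimate for $u,\tilde u,v\in C^\infty_c(0,1]$ and then invokes Lemma~\ref{lem:context} to extend, whereas you propose to work directly with $u,\tilde u\in X$; your version therefore tacitly uses the chain rule for $H^1$ functions composed with $C^1$ maps, which is standard but worth a word of justification.
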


\begin{proof}
 Let $u,\tilde{u},v \in C^\infty_c(0,1]$. 
 According to Lemma \ref{lem:NemGdiff}, $\hat{F}$ is G\^ateaux differentiable at $u$ and $\tilde{u}$ 
and we set
 $$ g_{u,\tilde{u},v}(\rho):=[D_G \hat{F}(u)v](\rho)-[D_G \hat{F}(\tilde{u})v](\rho)
 =\left [\partial_1 F(u(\rho),\rho)-\partial_1 F(\tilde{u}(\rho), \rho) \right ]v(\rho)$$
 where we have used Lemma \ref{lem:NemGdiff} again.
 Differentiating with respect to $\rho$ we obtain
 \begin{align*}
  |g_{u,\tilde{u},v}'(\rho)|&\leq |v(\rho)| \left |\partial_{11}F(u(\rho),\rho)u'(\rho)
-\partial_{11}F(\tilde{u}(\rho),\rho)\tilde{u}'(\rho)\right |\\
&\quad +|v(\rho)| \left | \partial_{12}F(u(\rho),\rho)-\partial_{12}F(\tilde{u}(\rho),\rho) \right |\\
&\quad +|v'(\rho)| \left |\partial_1 F(u(\rho),\rho)-\partial_1 F(\tilde{u}(\rho),\rho) \right |.
 \end{align*}
 By the assumptions on $F$ we have
 \begin{align*} 
 |\partial_{j1}F(x,\rho)-\partial_{j1}F(\tilde{x},\rho)|\leq |x-\tilde{x}|\int_0^1 |\partial_{j11}F(\tilde{x}+t(x-\tilde{x}), \rho)|dt \lesssim |x-\tilde{x}| 
 \end{align*}
for $j=1,2$ as well as
$$ |\partial_1 F(x,\rho)-\partial_1 F(\tilde{x},\rho)|\lesssim |x-\tilde{x}|\left (|x|+|\tilde{x}|
+\rho \right ) $$
for all $x, \tilde{x} \in \mathbb{R}$ and $\rho \in [0,1]$ (cf.~the proofs of Lemmas \ref{lem:Nemcont} and \ref{lem:NemGdiff}).
This shows that 
\begin{align*}
\left |\partial_{11}F(u(\rho),\rho)u'(\rho)-\partial_{11} F(\tilde{u}(\rho),\rho)\tilde{u}'(\rho) \right | &\leq
|\partial_{11}F(u(\rho),\rho)| |u'(\rho)-\tilde{u}'(\rho)| \\
&\quad + |\partial_{11}F(u(\rho),\rho)-\partial_{11}F(\tilde{u}(\rho),\rho)||\tilde{u}'(\rho)| \\
&\lesssim \left | |u(\rho)|+\rho \right | |u'(\rho)-\tilde{u}'(\rho)|+|u(\rho)-\tilde{u}(\rho)||\tilde{u}'(\rho)|
\end{align*}
and we obtain
\begin{align*}
 \|g_{u,\tilde{u},v}(\rho)\|_1^2&=\int_0^1 \frac{|g_{u,\tilde{u},v}'(\rho)|^2}{\rho^2}d\rho
 \lesssim \sup_{\rho \in (0,1)}\left | \frac{|v(\rho)|(|u(\rho)|+\rho)}{\rho} \right |^2 \int_0^1 |u'(\rho)-\tilde{u}'(\rho)|^2 d\rho \\
 &\quad + \sup_{\rho \in (0,1)} \left | \frac{|v(\rho)||u(\rho)-\tilde{u}(\rho)|}{\rho} \right |^2
 \int_0^1 |\tilde{u}'(\rho)|^2 d\rho +\sup_{\rho \in (0,1)} \left |\frac{|v(\rho)||u(\rho)-\tilde{u}(\rho)|}{\rho} \right |^2 \\
 &\quad + \sup_{\rho \in (0,1)}\left |\frac{|u(\rho)-\tilde{u}(\rho)|(|u(\rho)|+|\tilde{u}(\rho)|+\rho)}{\rho} \right |^2 \int_0^1 |v'(\rho)|^2 d\rho \\
 &\lesssim \|u-\tilde{u}\|_X^2 \gamma^2 (\|u\|_X, \|\tilde{u}\|_X, \|v\|_X)
\end{align*}
for a continuous function $\gamma: [0,\infty)^3 \to [0,\infty)$.
According to Lemma \ref{lem:NemGdiff}, the operator $D_G \hat{F}(u): X \to H_1$ is bounded and thus,
the above estimate extends to all $v \in X$ by approximation.
Consequently, we obtain
\begin{align} 
\label{eq:proofNemFdiff}
\|D_G \hat{F}(u)-D_G\hat{F}(\tilde{u})\|_{\mc{B}(X,H_1)} 
&\lesssim \sup\left \{ \|u-\tilde{u}\|_X\:\gamma(\|u\|_X,\|\tilde{u}\|_X,\|v\|_X): v \in X, \|v\|_X=1 \right \} \\
&=\|u-\tilde{u}\|_X\; \gamma(\|u\|_X,\|\tilde{u}\|_X,1). \nonumber
\end{align}
By Lemma \ref{lem:context}, the operator $D_G \hat{F}$ extends to a continuous map $D_G \hat{F}: X \to \mc{B}(X,H_1)$ and the estimate \eqref{eq:proofNemFdiff} remains valid for all $u, \tilde{u} \in X$.
Furthermore, Eq.~\eqref{eq:proofNemGdiff} shows that, for arbitrary $u \in X$, $D_G \hat{F}(u)$ is the G\^ateaux derivative of $\hat{F}$ at $u$.
Since $D_G \hat{F}$ is continuous, it follows from \cite{zeidler}, p.~137, Proposition 4.8 that $\hat{F}$ is Fr\'echet differentiable at every $u \in X$ and $D_G \hat{F}=D\hat{F}$.
\end{proof}

Now it is time to recall the definition \eqref{eq:nonlinearity} of the nonlinearity which reads
$$ N_T(u)=\sin(2(\psi^T+u))-\sin(2\psi^T)-2\cos(2\psi^T)u. $$
However, due to our definition of the similarity coordinates we have
$$ \psi^T(t,r)=2 \arctan \left (\tfrac{r}{T-t} \right )=f_0(\rho) $$
and thus, the nonlinearity does in fact not depend on $T$.
Consequently, we drop the subscript $T$ in the sequel and write $N$ instead of $N_T$.

\begin{lemma}
 \label{lem:NFdiff}
 The Nemitsky operator $\hat{N}$ associated to the nonlinearity $N$ extends to a continuous map $\hat{N}: X \to H_1$. Furthermore, $\hat{N}$ is continuously Fr\'echet differentiable at every $u \in X$ and the Fr\'echet derivative $D\hat{N}$ satisfies the estimate
 $$ \|D\hat{N}(u)-D\hat{N}(\tilde{u})\|_{\mc{B}(X,H_1)} \leq \|u-\tilde{u}\|_X\; \gamma(\|u\|_X, \|\tilde{u}\|_X) $$
 for all $u, \tilde{u} \in X$ and a suitable continuous function $\gamma: [0,\infty) \times [0,\infty) \to [0,\infty)$. 
\end{lemma}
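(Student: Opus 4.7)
The plan is to recognize that Lemma \ref{lem:NFdiff} is essentially a direct application of the abstract Lemma \ref{lem:NemFdiff} to the specific function
\[
F(x,\rho):=\sin\bigl(2f_0(\rho)+2x\bigr)-\sin\bigl(2f_0(\rho)\bigr)-2\cos\bigl(2f_0(\rho)\bigr)x,
\]
where we exploit the already-noted observation that in similarity coordinates $\psi^T(t,r)=f_0(\rho)$, so $N=N_T$ is actually independent of $T$ and $\hat{N}$ is the Nemitsky operator associated with $F$. Thus once we verify that $F$ satisfies the hypotheses of Lemmas \ref{lem:Nemcont}, \ref{lem:NemGdiff}, and \ref{lem:NemFdiff}, everything asserted follows at once, the continuous extension from Lemma \ref{lem:Nemcont}, the G\^ateaux differentiability and form of $D_G\hat{F}$ from Lemma \ref{lem:NemGdiff}, and finally the continuous Fr\'echet differentiability with the Lipschitz estimate from Lemma \ref{lem:NemFdiff}.

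Since $f_0(\rho)=2\arctan\rho$ is real-analytic on $[0,1]$, $F$ is $C^\infty$ on $\mathbb{R}\times[0,1]$, certainly $C^3$. The vanishing condition $\partial_1 F(0,\rho)=0$ is verified by direct differentiation: $\partial_1 F(x,\rho)=2\cos(2f_0(\rho)+2x)-2\cos(2f_0(\rho))$. The key quantitative bounds are obtained by exploiting the cancellation built into the definition of $N$. Computing
\[
\partial_{11}F(x,\rho)=-4\sin\bigl(2f_0(\rho)+2x\bigr),\qquad
\partial_{12}F(x,\rho)=4f_0'(\rho)\bigl[\sin(2f_0(\rho))-\sin(2f_0(\rho)+2x)\bigr],
\]
one checks the required bound $|\partial_{1j}F(x,\rho)|\leq C(|x|+\rho)$ by a case distinction: for $|x|+\rho\geq 1$ one simply uses $|\sin|\leq 1$ together with boundedness of $f_0'$ on $[0,1]$; for $|x|+\rho\leq 1$ one exploits $|\sin\theta|\leq|\theta|$ together with $|f_0(\rho)|=2|\arctan\rho|\leq 2\rho$ and, for $\partial_{12}F$, the identity $\sin\alpha-\sin(\alpha+2x)=-2\cos(\alpha+x)\sin x$ which gives a factor of $|x|$. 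The third-order bounds $|\partial_{11j}F(x,\rho)|\leq C$ are immediate from the formulas $\partial_{111}F=-8\cos(2f_0(\rho)+2x)$ and $\partial_{112}F=-8f_0'(\rho)\cos(2f_0(\rho)+2x)$ together with the uniform boundedness of $f_0'$ on $[0,1]$.

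With those four verifications in hand, Lemma \ref{lem:NemFdiff} immediately gives every claim: continuous extension of $\hat{N}$ from $C_c^\infty(0,1]$ to a map $X\to H_1$, continuous Fr\'echet differentiability at each $u\in X$, and the Lipschitz estimate on $D\hat{N}$ with the required continuous function $\gamma$. The only non-routine step is the $|x|+\rho$ bound on $\partial_{11}F$, where one must notice that the cancellation between $\sin(2f_0+2x)$ and $\sin(2f_0)$ is \emph{not} available (it has been used up in defining $N$), so the smallness at $(x,\rho)=(0,0)$ must instead come from the fact that the argument $2f_0(\rho)+2x$ itself is controlled by $|x|+\rho$ near the origin --- which is exactly what $f_0(\rho)=O(\rho)$ provides. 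This is the main conceptual point to get right; everything else is bookkeeping.
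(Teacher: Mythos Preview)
Your proposal is correct and follows essentially the same approach as the paper: both reduce the lemma to verifying the hypotheses of Lemma~\ref{lem:NemFdiff} for the explicit function $F(x,\rho)=\sin(2f_0(\rho)+2x)-\sin(2f_0(\rho))-2\cos(2f_0(\rho))x$. The only minor difference is in how the bound $|\partial_{1j}F(x,\rho)|\lesssim |x|+\rho$ is obtained: you argue by a case split on $|x|+\rho\gtrless 1$ and direct inequalities for $\sin$, whereas the paper integrates the already-established third-order bound $|\partial_{11j}F|\leq C$ via the fundamental theorem of calculus, writing $|\partial_{1j}F(x,\rho)|\leq |x|\int_0^1|\partial_{11j}F(tx,\rho)|\,dt+|\partial_{1j}F(0,\rho)|\lesssim |x|+\rho$ (using $\partial_{11}F(0,\rho)=-4\sin(2f_0(\rho))=O(\rho)$ and $\partial_{12}F(0,\rho)=0$); this is a slightly more systematic route but the content is the same.
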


\begin{proof}
We have 
$$ N(x,\rho)=\sin(2f_0(\rho)+2x)-\sin(2f_0(\rho))-2\cos(2f_0(\rho))x $$
 and it suffices to verify the assumptions of Lemma \ref{lem:NemFdiff} for $N$.
 Obviously, we have $N \in C^3(\mathbb{R}\times [0,1])$ and
 $$ \partial_1 N(x,\rho)=2\cos (2f_0(\rho)+2x)-2\cos(2f_0(\rho)) $$
 shows that $\partial_1 N(0,\rho)=0$ for all $\rho \in [0,1]$.
 Furthermore,
 \begin{align*}
\partial_{11}N(x,\rho)&=-4 \sin(2f_0(\rho)+2x) \\
\partial_{12}N(x,\rho)&=-4\sin(2f_0(\rho)+2x)f_0'(\rho)+4\sin(2f_0(\rho))f_0'(\rho) 
 \end{align*}
 and
 \begin{align*}
\partial_{111}N(x,\rho)&=-8\cos(2f_0(\rho)+2x) \\
\partial_{121}N(x,\rho)&=-8\cos(2f_0(\rho)+2x)f_0'(\rho)
 \end{align*}
which shows that there exists a constant $C>0$ such that
$$ |\partial_{11j}N(x,\rho)|\leq C $$
 for $j=1,2$ and all $(x,\rho) \in \mathbb{R} \times [0,1]$.
Consequently, we obtain
$$ |\partial_{1j}N(x,\rho)|\leq |x|\int_0^1 |\partial_{11j}N(tx,\rho)|dt+|\partial_{1j}N(0,\rho)|
 \lesssim |x|+\rho $$
 for $j=1,2$ and all $(x,\rho) \in \mathbb{R} \times [0,1]$
and we conclude that $N$ indeed satisfies the assumptions of Lemma \ref{lem:NemFdiff} which yields the claim. 
\end{proof}

After these preparations we are now ready to define the vector--valued nonlinearity $\mb{N}: \mc{H} \to \mc{H}$ by
$$ \mb{N}(\mb{u}):=\left ( \begin{array}{c} \hat{N}(A u_2) \\ 0 \end{array} \right ). $$
As a simple consequence of Lemma \ref{lem:NFdiff}, $\mb{N}$ is continuously Fr\'echet differentiable.

\begin{lemma}
\label{lem:N}
 The nonlinearity $\mb{N}: \mc{H} \to \mc{H}$ is continuously Fr\'echet differentiable at any $\mb{u} \in \mc{H}$.
 Furthermore, there exist continuous functions $\gamma_1: [0,\infty) \to [0,\infty)$ 
and $\gamma_2: [0,\infty) \times [0,\infty) \to [0,\infty)$ such that 
 $$ \|\mb{N}(\mb{u})\|\leq \|\mb{u}\|^2 \gamma_1(\|\mb{u}\|) $$
 and
 $$ \|D\mb{N}(\mb{u})\mb{v}-D\mb{N}(\tilde{\mb{u}})\mb{v}\|
\leq \|\mb{u}-\tilde{\mb{u}}\|\|\mb{v}\|\gamma_2(\|\mb{u}\|, \|\tilde{\mb{u}}\|) $$
 for all $\mb{u}, \tilde{\mb{u}}, \mb{v} \in \mc{H}$.
 Finally, $D\mb{N}(\mb{0})=\mb{0}$.
\end{lemma}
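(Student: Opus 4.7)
The plan is to view $\mb{N}$ as the composition $\iota\circ\hat{N}\circ A\circ\pi_2$, where $\pi_2:\mc{H}\to H_2$ is the bounded projection $\mb{u}\mapsto u_2$, $A:H_2\to X$ is the bounded linear operator established via Lemmas \ref{lem:estA1}--\ref{lem:estA1a}, $\hat{N}:X\to H_1$ is the continuously Fr\'echet differentiable Nemitsky operator of Lemma \ref{lem:NFdiff}, and $\iota:H_1\to\mc{H}$ is the isometry $w\mapsto(w,0)$. Since three of the four factors are bounded and linear, the chain rule immediately gives continuous Fr\'echet differentiability of $\mb{N}$ with
$$ D\mb{N}(\mb{u})\mb{v}=\bigl(D\hat{N}(Au_2)Av_2,\,0\bigr). $$

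For $D\mb{N}(\mb{0})=\mb{0}$, I would invoke Lemma \ref{lem:NemGdiff} (extended to all of $X$ via Lemma \ref{lem:NemFdiff}), which gives $[D\hat{N}(0)w](\rho)=\partial_1 N(0,\rho)w(\rho)$. Since $\partial_1 N(x,\rho)=2\cos(2f_0(\rho)+2x)-2\cos(2f_0(\rho))$ vanishes identically at $x=0$, $D\hat{N}(0)=0$, and hence $D\mb{N}(\mb{0})=\mb{0}$.

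The Lipschitz estimate on $D\mb{N}$ follows directly from Lemma \ref{lem:NFdiff} combined with the operator bound $\|A\cdot\|_X\lesssim\|\cdot\|_2$: writing $w=Au_2$, $\tilde w=A\tilde u_2$,
$$ \|D\mb{N}(\mb{u})\mb{v}-D\mb{N}(\tilde{\mb{u}})\mb{v}\|=\|[D\hat{N}(w)-D\hat{N}(\tilde w)]Av_2\|_1\leq\|w-\tilde w\|_X\,\tilde\gamma(\|w\|_X,\|\tilde w\|_X)\|Av_2\|_X, $$
and replacing each $X$-norm by the corresponding $H_2$-norm (and then by the $\mc{H}$-norm) yields the required estimate with a suitable $\gamma_2$.

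Finally, for the quadratic bound on $\|\mb{N}(\mb{u})\|$, I would apply the fundamental theorem of calculus. Since $\mb{N}(\mb{0})=\mb{0}$ (because $N(0,\rho)=0$), one has
$$ \mb{N}(\mb{u})=\int_0^1 D\mb{N}(t\mb{u})\mb{u}\,dt=\int_0^1\bigl[D\mb{N}(t\mb{u})-D\mb{N}(\mb{0})\bigr]\mb{u}\,dt, $$
and the Lipschitz bound applied with $\tilde{\mb{u}}=\mb{0}$ gives $\|D\mb{N}(t\mb{u})-D\mb{N}(\mb{0})\|_{\mc{B}(\mc{H})}\leq t\|\mb{u}\|\gamma_2(t\|\mb{u}\|,0)$, so
$$ \|\mb{N}(\mb{u})\|\leq\|\mb{u}\|^2\int_0^1 t\gamma_2(t\|\mb{u}\|,0)\,dt\leq\|\mb{u}\|^2\gamma_1(\|\mb{u}\|) $$
for a suitable continuous $\gamma_1$. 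The only conceptual point is that the desired quadratic (rather than merely linear) behaviour at the origin is driven entirely by $\partial_1 N(0,\rho)=0$; the rest is routine chain-rule bookkeeping.
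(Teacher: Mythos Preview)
Your proof is correct and essentially identical to the paper's: the paper also factors $\mb{N}=\mb{T}\circ\hat{N}\circ B$ with $B\mb{u}=Au_2$ and $\mb{T}u=(u,0)$ (your $A\circ\pi_2$ and $\iota$), applies the chain rule together with Lemma~\ref{lem:NFdiff}, reads off $D\mb{N}(\mb{0})=\mb{0}$ from $\partial_1 N(0,\rho)=0$ via Lemma~\ref{lem:NemGdiff}, and then uses the fundamental theorem of calculus exactly as you do to upgrade the linear bound $\|D\mb{N}(\mb{u})\mb{v}\|\lesssim\|\mb{u}\|\|\mb{v}\|\gamma(\|B\mb{u}\|_X,0)$ to the quadratic estimate on $\|\mb{N}(\mb{u})\|$.
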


\begin{proof}
 Define auxiliary operators $B: \mc{H} \to X$ and $\mb{T}: H_1 \to \mc{H}$ by
 $$ B \mb{u}:=A u_2 \mbox{ and } \mb{T} u:=\left ( \begin{array}{c}u \\ 0 \end{array} \right ). $$
 Obviously, $\mb{T}$ is linear and bounded.
 The same is true for $B$ since
 $$ \|B \mb{u}\|_X=\|Au_2 \|_X \lesssim \|u_2\|_2 \lesssim \|\mb{u}\| $$
 by Lemmas \ref{lem:estA1} and \ref{lem:estA1a}.
 Therefore, $\mb{N}$ can be written as $\mb{N}=\mb{T} \circ \hat{N} \circ B$. 
 Consequently, Lemma \ref{lem:NFdiff} and the chain rule show that $\mb{N}$ is Fr\'echet differentiable at every $\mb{u} \in \mc{H}$ and we obtain
 $$ D\mb{N}(\mb{u})=D\mb{T}(\hat{N}(B\mb{u}))D\hat{N}(B\mb{u})DB(\mb{u})=\mb{T}D\hat{N}(B\mb{u})B. $$
 By Lemma \ref{lem:NemGdiff} we have
 $$ [D\hat{N}(0)v](\rho)=\partial_1 N(0,\rho)v(\rho)=0 $$
 which shows $D\mb{N}(\mb{0})=\mb{0}$.
 Furthermore, the estimate from Lemma \ref{lem:NFdiff} yields
 \begin{align*}
  \|D\mb{N}(\mb{u})\mb{v}-D\mb{N}(\tilde{\mb{u}})\mb{v}\|& \lesssim 
  \|D\hat{N}(B\mb{u})B\mb{v}-D\hat{N}(B\tilde{\mb{u}})B\mb{v}\| \\
  &\lesssim \|B\mb{u}-B\tilde{\mb{u}}\|_X \|B\mb{v}\|_X \; \gamma(\|B\mb{u}\|_X, \|B\tilde{\mb{u}}\|_X) \\
  &\lesssim \|\mb{u}-\tilde{\mb{u}}\| \|\mb{v}\| \; \gamma(\|B\mb{u}\|_X, \|B\tilde{\mb{u}}\|_X) 
 \end{align*}
and, with $D\mb{N}(\mb{0})=\mb{0}$, this shows in particular that
$$ \|D\mb{N}(\mb{u})\mb{v}\|\lesssim \|\mb{u}\|\|\mb{v}\|\; \gamma(\|B\mb{u}\|_X, 0) $$
for all $\mb{u}, \mb{v} \in \mc{H}$.
Consequently, by using the fundamental theorem of calculus, we obtain
\begin{align*}
 \|\mb{N}(\mb{u})\|\leq \int_0^1 \|D\mb{N}(t\mb{u})\mb{u}\|dt 
\leq \|\mb{u}\|^2 \int_0^1 \gamma(t\|B\mb{u}\|_X, 0)dt
\end{align*}
since $\mb{N}(\mb{0})=\mb{0}$.
\end{proof}

\subsection{Global existence for the nonlinear problem}
We establish global existence for the wave maps equation \eqref{eq:main1stcss}.
However, at the moment we do not allow for arbitrary initial data but modify the given data along the one--dimensional subspace spanned by the gauge mode. 
Thereby, we suppress the gauge instability.
The main result of this section can be viewed as a nonlinear version of Theorem \ref{thm:linear}.

As a first step we reformulate Eq.~\eqref{eq:main1stcss} as a nonlinear ODE on the Hilbert space $\mc{H}$.
With the nonlinear mapping $\mb{N}: \mc{H} \to \mc{H}$ from above we can simply write
\begin{equation}
\label{eq:maincssop}
 \left \{ \begin{array}{l}
\frac{d}{d \tau}\Phi(\tau)=L\Phi(\tau)+\mb{N}(\Phi(\tau)) \mbox{ for }\tau>-\log T \\
\Phi(-\log T)=\mb{u}
          \end{array} \right .
\end{equation}
for a function $\Phi: [-\log T,\infty) \to \mc{H}$ with initial data $\mb{u}$.
We do not specify the initial data explicitly but keep them general in this section.
Our aim is to construct a mild solution of this equation.
By a mild solution we mean a solution of the associated integral equation
\begin{equation}
\label{eq:maincssopmild}
 \Phi(\tau)=S(\tau+\log T)\mb{u}+\int_{-\log T}^\tau S(\tau-\tau')\mb{N}(\Phi(\tau'))d\tau', \quad \tau \geq -\log T 
\end{equation}
where $S$ is the semigroup from Theorem \ref{thm:linear}.
Here, the integral is well--defined as a Riemann integral over a continuous function with values in a Banach space.
Consequently, we restrict ourselves to continuous solutions 
$\Phi: [-\log T,\infty) \to \mc{H}$.
Any solution of Eq.~\eqref{eq:maincssop} is also a solution of Eq.~\eqref{eq:maincssopmild}.
The converse is not true since Eq.~\eqref{eq:maincssop} requires more regularity than Eq.~\eqref{eq:maincssopmild}.
However, for sufficiently regular functions $\Phi$, both problems Eq.~\eqref{eq:maincssop} and Eq.~\eqref{eq:maincssopmild} are equivalent.
As a consequence, the concept of a mild solution is more general.

In fact, though, we want to consider solutions of Eq.~\eqref{eq:maincssopmild} with different $T>0$
\emph{simultaneously} so we should write $\Phi^T$ instead of $\Phi$ in order to indicate the dependence on $T$.
Here, we encounter the technical problem that 
the domain of definition varies with $T$. 
In order to circumvent this difficulty, we construct a ``universal'' solution $\Psi$ for all $T>0$ by translation, i.e., we set
$$ \Psi(\tau):=\Phi^T(\tau-\log T) $$ for $\tau \geq 0$.
The function $\Phi^T$ satisfies Eq.~\eqref{eq:maincssopmild} if and only if $\Psi$ satisfies the translated equation
\begin{equation}
\label{eq:maincssopuni}
\Psi(\tau)=S(\tau)\mb{u}+\int_0^\tau S(\tau-\tau')\mb{N}(\Psi(\tau'))d\tau', \quad \tau \geq 0
\end{equation}
as follows by a straightforward change of variables.
Obviously, $\Psi$ is independent of $T$ which justifies the notation.
What makes this possible is nothing but the time translation invariance of the wave maps equation.
Having constructed the solution $\Psi$, the function $\Phi^T$, for different values of $T>0$, can be obtained by simply noting that
$$ \Phi^T(\tau)=\Psi(\tau+\log T) $$
for $\tau \geq -\log T$.
Consequently, it suffices to consider the solution $\Psi$.

For a small $\varepsilon>0$, we set 
$$\omega:=\max\left \{-\tfrac{1}{2},s_0 \right \}+\varepsilon$$
where $s_0$ is the spectral bound (see Definition \ref{def:sb}).
In \cite{DSA} it has been proved that $s_0<\frac{1}{2}$, however, as already mentioned in the beginning, there is no reasonable doubt that in fact $s_0<0$, i.e., that $\psi^T$ is mode stable.
\begin{center}
\fbox{From now on we \emph{assume} that $\omega<0$.}
\end{center}
Note that the estimate for the linear time evolution from Theorem \ref{thm:linear} now reads
$$ \|S(\tau)(1-P)\mb{f}\|\lesssim e^{-|\omega|\tau}\|(1-P)\mb{f}\| $$
for all $\tau \geq 0$ and all $\mb{f} \in \mc{H}$.
We define a spacetime Banach space $\mc{X}$ by setting
$$ \mc{X}:=\left \{\Phi \in C([0,\infty),\mc{H}): \sup_{\tau > 0}e^{|\omega|\tau}\|\Phi(\tau)\|<\infty \right \} $$
and 
$$ \|\Phi\|_\mc{X}:=\sup_{\tau > 0}e^{|\omega|\tau}\|\Phi(\tau)\|. $$
The vector space $\mc{X}$ equipped with $\|\cdot\|_\mc{X}$ is a Banach space and we have encoded the decay property of the linear evolution in the definition of $\mc{X}$.
Thus, our hope is to retain the decay of the linear evolution on the nonlinear level.

Now we (formally) define the mapping
\begin{align} 
\label{eq:defK}
\mb{K}(\Psi, \mb{u})(\tau)&:=S(\tau)(1-P)\mb{u}-\int_{0}^\infty e^{\tau-\tau'} P\mb{N}(\Psi(\tau')) d\tau' \\
& \quad +\int_0^\tau S(\tau-\tau')\mb{N}(\Psi(\tau'))d\tau', \quad \tau \geq 0  \nonumber
\end{align}
where $P$ is the spectral projection from Theorem \ref{thm:linear}.
We will show in a moment (Lemma \ref{lem:KtoX} below) that $\mb{K}$ is well defined as an operator from
$\mc{X} \times \mc{H}$ to $\mc{X}$.
The relevance of $\mb{K}$ for the Cauchy problem \eqref{eq:maincssop} is based on the following observation.
Suppose there exists a function $\Psi \in \mc{X}$ that satisfies $\Psi=\mb{K}(\Psi,\mb{u})$.
By comparison with Eq.~\eqref{eq:maincssopuni}, we see that $\Psi$ is a solution of Eq.~\eqref{eq:maincssopuni} with the rather strange--looking initial data
$$ \Psi(0)=(1-P)\mb{u}-
\int_{0}^\infty e^{-\tau'} P\mb{N}(\Psi(\tau')) d\tau'. $$
However, this can also be written as
$$ \Psi(0)=\mb{u}-P \left [\mb{u}+\int_{0}^\infty e^{-\tau'} \mb{N}(\Psi(\tau')) d\tau' \right ] $$
and therefore, the original initial data $\mb{u}$ have been modified by adding an element of the unstable subspace $\langle \mb{g} \rangle$ (recall that $P\mc{H}=\langle \mb{g} \rangle$, see Theorem \ref{thm:linear}).
It is imporant to note, however, that this correction depends on the solution $\Psi$ itself.
We will discuss later how to deal with this inconvenient fact, see Sec.~\ref{sec:globalarbitrary} below.
For the moment we record the following: if we can show that there exists a function $\Psi$ with $\Psi=\mb{K}(\Psi,\mb{u})$, we obtain a global mild solution of Eq.~\eqref{eq:maincssop} for initial data that satisfy a ``co--dimension one condition''.
The reader may have noticed that there are many similarities with center--stable manifolds in the context of Hamiltonian evolution equations.
In fact, the construction of the solution by adding this type of modification is 
\emph{formally} exactly the same as e.g., in \cite{schlag}.
Consequently, it is very likely that there exists a center--stable manifold approach to the problem at hand but
we do not pursue this issue here any further.

Our goal is to apply the implicit function theorem on Banach spaces.
In order to do so, we have to show that $\mb{K}$ is continuously Fr\'echet differentiable.

\begin{lemma}
 \label{lem:KtoX}
 Let 
$(\Psi, \mb{u}) \in \mc{X} \times \mc{H}$.
Then we have 
$ \mb{K}(\Psi, \mb{u}) \in \mc{X}$.
\end{lemma}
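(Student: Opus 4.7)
The plan is to verify the two defining properties of $\mc{X}$: that $\tau\mapsto \mb{K}(\Psi,\mb{u})(\tau)$ is continuous from $[0,\infty)$ into $\mc{H}$ and that $\sup_{\tau>0}e^{|\omega|\tau}\|\mb{K}(\Psi,\mb{u})(\tau)\|<\infty$. The decisive observation is that the compensating term in \eqref{eq:defK} has been arranged precisely to cancel the unstable part of the Duhamel integral. Since $\mathrm{rg}\,P=\langle \mb{g}\rangle$ with $L\mb{g}=\mb{g}$, the intertwining $PL\subset LP$ from Theorem \ref{thm:linear} together with $\|S(\tau)P\mb{f}\|=e^\tau\|P\mb{f}\|$ forces $S(\sigma)P=e^\sigma P$ for every $\sigma\ge 0$. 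Splitting $\mb{N}=P\mb{N}+(1-P)\mb{N}$ inside the Duhamel integral and exploiting this identity collapses the three-term expression into
$$\mb{K}(\Psi,\mb{u})(\tau)= S(\tau)(1-P)\mb{u}-\int_\tau^\infty e^{\tau-\tau'}P\mb{N}(\Psi(\tau'))\,d\tau'+\int_0^\tau S(\tau-\tau')(1-P)\mb{N}(\Psi(\tau'))\,d\tau',$$
in which a $\int_0^\tau$ of a growing kernel has been traded for a $\int_\tau^\infty$ of a decaying one.

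Next I would bound each summand. Since $\Psi\in\mc{X}$, Lemma \ref{lem:N} delivers $\|\mb{N}(\Psi(\tau'))\|\le e^{-2|\omega|\tau'}\|\Psi\|_\mc{X}^{\,2}\,\gamma_1(\|\Psi\|_\mc{X})$, where $\gamma_1$ is continuous and hence bounded on the relevant compact interval. Theorem \ref{thm:linear} then gives $\|S(\tau)(1-P)\mb{u}\|\lesssim e^{-|\omega|\tau}\|\mb{u}\|$ for the first piece; the elementary computations $\int_\tau^\infty e^{\tau-\tau'}e^{-2|\omega|\tau'}\,d\tau'=(1+2|\omega|)^{-1}e^{-2|\omega|\tau}$ and $\int_0^\tau e^{-|\omega|(\tau-\tau')}e^{-2|\omega|\tau'}\,d\tau'\le |\omega|^{-1}e^{-|\omega|\tau}$ handle the second and third pieces respectively. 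Since $|\omega|>0$, both nonlinear pieces are $O(e^{-|\omega|\tau})$, and summing everything yields
$$e^{|\omega|\tau}\|\mb{K}(\Psi,\mb{u})(\tau)\|\lesssim \|\mb{u}\|+\|\Psi\|_\mc{X}^{\,2}\,\gamma_1(\|\Psi\|_\mc{X}),$$
uniformly in $\tau\ge 0$, which is the desired bound.

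For continuity, each of the three terms as written in the original definition \eqref{eq:defK} is separately continuous in $\tau$: the first by strong continuity of $S$; the second because it is $e^\tau$ times a fixed vector in $\mc{H}$, the Bochner integral converging absolutely by the bound just established together with $\|P\mb{N}(\Psi(\tau'))\|\lesssim e^{-2|\omega|\tau'}$; and the third as a standard Duhamel convolution of a $C_0$-semigroup against the continuous $\mc{H}$-valued map $\tau'\mapsto \mb{N}(\Psi(\tau'))$, which is continuous because $\mb{N}$ is continuous by Lemma \ref{lem:N} and $\Psi\in C([0,\infty),\mc{H})$. The only genuinely nontrivial point --- and in my view the conceptual heart of the construction --- is the cancellation identity used in the first step: without the correction term, the Duhamel integral would produce an $e^\tau$-growing component along $\langle\mb{g}\rangle$ manifestly incompatible with membership in $\mc{X}$, and the correction has been designed exactly to convert the problematic $\int_0^\tau$ into a convergent $\int_\tau^\infty$.
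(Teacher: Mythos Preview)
Your proof is correct and follows essentially the same route as the paper: both exploit $S(\sigma)P=e^\sigma P$ to collapse the correction term and the $P$-part of the Duhamel integral into the tail $\int_\tau^\infty e^{\tau-\tau'}P\mb{N}(\Psi(\tau'))\,d\tau'$, and then bound the $P$ and $(1-P)$ pieces separately via Lemma~\ref{lem:N} and Theorem~\ref{thm:linear}. The only cosmetic difference is that you write down the rewritten expression for $\mb{K}$ once and for all, whereas the paper computes $P\mb{K}$ and $(1-P)\mb{K}$ separately; the content is identical.
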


\begin{proof}
Fix $(\Psi,\mb{u}) \in \mc{X} \times \mc{H}$.
In the following, the implicit constants of the $\lesssim$ notation may depend on $\Psi$ and  $\mb{u}$.
 Note first that the integrals in \eqref{eq:defK} are well--defined as Riemann integrals over a continuous function (with values in a Banach space).
 Furthermore, according to Lemma \ref{lem:N}, we have
 \begin{equation}
 \label{eq:proofKtoX}
\|P\mb{N}(\Psi(\tau'))\|\lesssim \|\Psi(\tau')\|^2 \gamma(\|\Psi(\tau')\|) \lesssim 
 e^{-2|\omega|\tau'} 
 \end{equation}
 for all $\tau' \geq 0$ by the continuity of $\gamma$ and this shows that the first integral in the definition of $\mb{K}$, Eq.~\eqref{eq:defK}, exists.
 We claim that $K(\Psi, \mb{u})\in C([0,\infty),\mc{H})$.
 Obviously, by the strong continuity of the semigroup, the map $\tau \mapsto S(\tau)\mb{u}: [0,\infty) \to \mc{H}$ is continuous.
 Thus, it remains to show continuity of the two integral terms.
 To this end choose an arbitrary $\tau_0 \geq 0$.
 Then we have
 \begin{align*}
  &\left \|\int_0^\infty e^{\tau_0-\tau'}P\mb{N}(\Psi(\tau'))d\tau'-
  \int_0^\infty e^{\tau-\tau'}P\mb{N}(\Psi(\tau'))d\tau' \right \|\\
  &\leq \left |e^{\tau_0}-e^\tau \right |\int_{0}^\infty e^{-\tau'}\|P\mb{N}(\Psi(\tau'))\|d\tau' \to 0 
 \end{align*}
as $\tau \to \tau_0$. Furthermore,
\begin{align*}
 &\left \|\int_{0}^{\tau_0} S(\tau_0-\tau')\mb{N}(\Psi(\tau'))d\tau'
 -\int_{0}^\tau S(\tau-\tau')\mb{N}(\Psi(\tau'))d\tau' \right \| \\
 &\leq \int_{0}^{\tau_0} \left \|S(\tau_0-\tau')\mb{N}(\Psi(\tau'))-S(\tau-\tau')\mb{N}(\Psi(\tau'))
 \right \|d\tau' \\
&\quad +\left |\int_{\tau_0}^\tau \|S(\tau-\tau')\mb{N}(\Psi(\tau'))\|d\tau' \right | \to 0
\end{align*}
as $\tau \to \tau_0$ by dominated convergence since
$$ \|S(\tau_0-\tau')P\mb{N}(\Psi(\tau'))-S(\tau-\tau')P\mb{N}(\Psi(\tau'))\| \to 0 $$
as $\tau \to \tau_0$ by the strong continuity of $S$. 
This proves that $\mb{K}(\Psi, \mb{u}) \in C([0,\infty),\mc{H})$.

It remains to show that $\|\mb{K}(\Psi,\mb{u})\|_\mc{X}<\infty$.
To this end note first that, for any $\mb{f}$, we have $S(\tau)P\mb{f}=e^{\tau}P\mb{f}$.
To see this, recall that $P\mc{H}=\langle \mb{g} \rangle$ (Theorem \ref{thm:linear}) and therefore, for any $\mb{f} \in \mc{H}$, there exists a $c(\mb{f})\in \mathbb{C}$ such that $P\mb{f}=c(\mb{f})\mb{g}$.
Since $\mb{g}$ is an eigenfunction of $L$ with eigenvalue $1$, we obtain
$$ S(\tau)P\mb{f}=c(\mb{f})S(\tau)\mb{g}=e^\tau c(\mb{f})\mb{g}=e^\tau P\mb{f} $$
as claimed.
Consequently, since $P$ commutes with $S$ (and also with the integrals since $P$ is bounded), we obtain
\begin{align*}
 \|P\mb{K}(\Psi,\mb{u})(\tau)\| &= \left \|
 -\int_{0}^\infty e^{\tau-\tau'}P\mb{N}(\Psi(\tau'))d\tau'+\int_{0}^\tau S(\tau-\tau')P\mb{N}(\Psi(\tau'))d\tau' \right \| \\
&\leq \int_\tau^\infty e^{\tau-\tau'}\|P\mb{N}(\Psi(\tau'))\|d\tau' 
\lesssim \int_\tau^\infty e^{\tau-(1+2|\omega|)\tau'}d\tau' \\
&\lesssim e^{-2|\omega|\tau} 
\end{align*}
for all $\tau \geq 0$ by Eq.~\eqref{eq:proofKtoX}.
Furthermore, we have
\begin{align*}
 \|(1-P)\mb{K}(\Psi, \mb{u})(\tau)\|&\leq \|S(\tau)(1-P)\mb{u}\|+\int_{0}^\tau \|S(\tau-\tau')(1-P)\mb{N}(\Psi(\tau'))\|d\tau' \\
 &\lesssim e^{-|\omega|\tau}\|\mb{u}\|+\int_{0}^\tau e^{-|\omega|(\tau-\tau')}\|\mb{N}(\Psi(\tau'))\|d\tau' \\
 &\lesssim e^{-|\omega|\tau}+\int_{0}^\tau e^{-|\omega|(\tau+\tau')}d\tau' \\
 &\lesssim e^{-|\omega|\tau}
\end{align*}
for all $\tau \geq 0$ by Theorem \ref{thm:linear}.
Adding up the two contributions we obtain
$$ \|\mb{K}(\Psi,\mb{u})(\tau)\|\leq \|P\mb{K}(\Psi,\mb{u})(\tau)\|+\|(1-P)\mb{K}(\Psi,\mb{u})(\tau)\|\lesssim e^{-|\omega| \tau} $$
and this yields
$$ \|\mb{K}(\Psi,\mb{u})\|_\mc{X}=\sup_{\tau > 0}e^{|\omega \tau|}\|\mb{K}(\Psi,\mb{u})(\tau)\|\lesssim 1 $$
which finishes the proof.
\end{proof}

We define
$\mb{K}_j: \mc{X} \times \mc{H}  \to \mc{X}$, $j=1,2$, by
\begin{align*} 
\mb{K}_1(\Psi, \mb{u})(\tau)&:=S(\tau)(1-P)\mb{u} \\
 \mb{K}_2(\Psi,\mb{u})(\tau)&:=-\int_{0}^\infty e^{\tau-\tau'}P\mb{N}(\Psi(\tau'))d\tau' 
 +\int_{0}^\tau S(\tau-\tau')\mb{N}(\Psi(\tau'))d\tau'
\end{align*}
for $\tau \geq 0$. 
By definition we have $\mb{K}=\mb{K}_1+\mb{K}_2$.

\begin{lemma}
\label{lem:K1}
 The mapping $\mb{K}_1: \mc{X} \times \mc{H} \to \mc{X}$ is continuously Fr\'echet differentiable 
 and its Fr\'echet derivative is given by
$$ [D \mb{K}_1(\Psi,\mb{u})(\Phi, \mb{v})](\tau)=S(\tau)(1-P)\mb{v} $$
for all $\tau \geq 0$.
\end{lemma}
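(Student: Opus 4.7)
The plan is to observe that $\mb{K}_1$ is essentially a bounded linear map, so its Fréchet derivative is trivial to identify. Concretely, $\mb{K}_1(\Psi, \mb{u})(\tau) = S(\tau)(1-P)\mb{u}$ does not depend on $\Psi$ at all and is linear in $\mb{u}$, so one expects the derivative to pick out only the $\mb{v}$ component and return the same expression.

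First, I would verify that $\mb{K}_1$ really does land in $\mc{X}$ and is bounded as a linear operator. Continuity of $\tau \mapsto S(\tau)(1-P)\mb{u}$ is immediate from the strong continuity of the semigroup, and Theorem \ref{thm:linear} yields
\[
e^{|\omega|\tau}\|S(\tau)(1-P)\mb{u}\| \lesssim e^{|\omega|\tau} e^{-|\omega|\tau}\|(1-P)\mb{u}\| \lesssim \|\mb{u}\|
\]
for all $\tau \geq 0$, so $\mb{K}_1(\Psi,\mb{u}) \in \mc{X}$ with $\|\mb{K}_1(\Psi,\mb{u})\|_\mc{X} \lesssim \|\mb{u}\|$. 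In particular, the map
\[
T : \mc{X} \times \mc{H} \to \mc{X}, \qquad T(\Phi, \mb{v}) := \bigl[\tau \mapsto S(\tau)(1-P)\mb{v}\bigr]
\]
is a bounded linear operator.

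Next, I would observe the identity $\mb{K}_1(\Psi + \Phi, \mb{u} + \mb{v}) - \mb{K}_1(\Psi, \mb{u}) = T(\Phi, \mb{v})$, which holds exactly (no remainder) because $\mb{K}_1$ is affine; indeed it is linear, with $\mb{K}_1(\Psi,\mb{u}) = T(\Psi,\mb{u})$. This shows that $D\mb{K}_1(\Psi,\mb{u}) = T$ at every point $(\Psi,\mb{u}) \in \mc{X}\times \mc{H}$, which matches the formula claimed in the lemma. Finally, since $D\mb{K}_1$ is a constant map from $\mc{X}\times \mc{H}$ into $\mc{B}(\mc{X}\times \mc{H}, \mc{X})$, it is trivially continuous, so $\mb{K}_1$ is continuously Fréchet differentiable. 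There is no real obstacle here; the only point worth being careful about is invoking Theorem \ref{thm:linear} to ensure that the image truly lies in $\mc{X}$ with a bound independent of $\tau$, which is exactly where the projection onto the stable subspace is used.
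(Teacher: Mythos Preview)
Your proof is correct and essentially the same as the paper's: both recognize that $\mb{K}_1$ is bounded and linear (hence its Fr\'echet derivative is constant and equal to itself), with the bound into $\mc{X}$ coming from Theorem~\ref{thm:linear}. The only cosmetic difference is that the paper computes the two partial Fr\'echet derivatives separately ($D_1\mb{K}_1=\mb{0}$ and $D_2\mb{K}_1(\Psi,\mb{u})\mb{v}=[\tau\mapsto S(\tau)(1-P)\mb{v}]$) and then invokes a standard result to assemble the full derivative, whereas you treat the full map as linear in one step; both amount to the same observation.
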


\begin{proof}
We consider the partial Fr\'echet derivatives.
$\mb{K}_1$ does not depend on $\Psi$ so trivially, we have $D_1 \mb{K}_1 (\Psi,\mb{u})=\mb{0}$.
$\mb{K}_1$ is linear with respect to the second variable which shows that 
$$[D_2 \mb{K}_1(\Psi, \mb{u})\mb{v}](\tau)=S(\tau)(1-P)\mb{v}$$ 
for all $\mb{v} \in \mc{H}$.
Since $\|S(\tau)(1-P)\mb{v}\|\lesssim e^{-|\omega|\tau}\|(1-P)\mb{v}\|$ by Theorem \ref{thm:linear}
and $D_2 \mb{K}_1(\Psi,\mb{u})\mb{v}$ is independent of $(\Psi,\mb{u})$, we trivially see that 
$D_2 \mb{K}_1: \mc{X} \times \mc{H} \to \mc{B}(\mc{H}, \mc{X})$ is continuous.
Consequently, all partial Fr\'echet derivates of $\mb{K}_1$ exist and they are continuous. 
Thus, a standard result, see e.g., \cite{zeidler}, p.~140, Proposition 4.14, implies that $D\mb{K}_1$ exists, is continuous and given by
$$ D\mb{K}_1(\Psi,\mb{u})(\Phi,\mb{v})=D_1 \mb{K}_1(\Psi,\mb{u})\Phi+D_2 \mb{K}_1(\Psi, \mb{u})\mb{v}. $$
\end{proof}

\begin{lemma}
\label{lem:K2}
 The mapping $\mb{K}_2: \mc{X} \times \mc{H} \to \mc{X}$ is continuously Fr\'echet differentiable and its partial Fr\'echet derivative with respect to the first variable is given by
 \begin{align*} [D_1\mb{K}_2(\Psi, \mb{u})\Phi](\tau)=&-\int_0^\infty e^{\tau-\tau'}PD\mb{N}(\Psi(\tau'))\Phi(\tau')d\tau' \\
&+\int_0^\tau S(\tau-\tau')D\mb{N}(\Psi(\tau'))\Phi(\tau')d\tau' 
 \end{align*}
 for all $\tau \geq 0$.
\end{lemma}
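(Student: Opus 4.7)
The plan is to mirror the strategy of Lemma \ref{lem:K1} (partial derivatives plus the standard criterion), exploiting the fact that $\mb{K}_2$ does not depend on $\mb{u}$ and that $\mb{N}$ is already known to be continuously Fréchet differentiable with the strong quantitative control supplied by Lemma \ref{lem:N}.

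First I would note that $\mb{K}_2$ is independent of $\mb{u}$, so $D_2\mb{K}_2(\Psi,\mb{u})=\mb{0}$ trivially, and the whole statement reduces to proving that the proposed operator $D_1\mb{K}_2(\Psi,\mb{u})$ is (i) a well-defined bounded linear operator $\mc{X}\to\mc{X}$, (ii) actually the partial Fréchet derivative of $\mb{K}_2$ at $(\Psi,\mb{u})$, and (iii) continuous as a map $\mc{X}\times\mc{H}\to\mc{B}(\mc{X},\mc{X})$. Once this is done, the invoked result \cite{zeidler}, p.~140, Proposition 4.14 yields continuous Fréchet differentiability of $\mb{K}_2$ exactly as in the proof of Lemma \ref{lem:K1}.

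For (i), the key input is Lemma \ref{lem:N}: combined with $D\mb{N}(\mb{0})=\mb{0}$ it gives
\[
\|D\mb{N}(\Psi(\tau'))\Phi(\tau')\|\leq \|\Psi(\tau')\|\,\|\Phi(\tau')\|\,\gamma_2(\|\Psi(\tau')\|,0)\lesssim e^{-2|\omega|\tau'}\|\Phi\|_{\mc{X}},
\]
where the implicit constant depends on $\|\Psi\|_{\mc{X}}$ through the continuous function $\gamma_2$. Inserting this into the definition of $D_1\mb{K}_2(\Psi,\mb{u})\Phi$ and running the exact same splitting into the $P$- and $(1-P)$-parts used in Lemma \ref{lem:KtoX} (recalling $S(\tau)P=e^\tau P$ so that the unstable pieces combine into $-\int_\tau^\infty e^{\tau-\tau'}P(\cdot)d\tau'$), one arrives at $\|D_1\mb{K}_2(\Psi,\mb{u})\Phi\|_{\mc{X}}\lesssim \|\Phi\|_{\mc{X}}$. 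Continuity of $\tau\mapsto D_1\mb{K}_2(\Psi,\mb{u})\Phi(\tau)$ in $\mc{H}$ follows exactly as in Lemma \ref{lem:KtoX} from dominated convergence and the strong continuity of $S$.

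For (ii), the heart of the argument: using the fundamental theorem of calculus on the $\mc{H}$-valued map $t\mapsto \mb{N}(\Psi(\tau')+t\Phi(\tau'))$, the integrand of the remainder $\mb{K}_2(\Psi+\Phi,\mb{u})-\mb{K}_2(\Psi,\mb{u})-D_1\mb{K}_2(\Psi,\mb{u})\Phi$ takes the form
\[
\int_0^1\bigl[D\mb{N}(\Psi(\tau')+t\Phi(\tau'))-D\mb{N}(\Psi(\tau'))\bigr]\Phi(\tau')\,dt,
\]
and the Lipschitz estimate of Lemma \ref{lem:N} bounds this in $\mc{H}$ by $\|\Phi(\tau')\|^2\,\tilde\gamma$ with $\tilde\gamma$ depending continuously on $\|\Psi\|_{\mc{X}},\|\Phi\|_{\mc{X}}$. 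Since $\|\Phi(\tau')\|^2\leq e^{-2|\omega|\tau'}\|\Phi\|_{\mc{X}}^2$, the same $P/(1-P)$ splitting as in (i) yields a remainder of size $\lesssim \|\Phi\|_{\mc{X}}^2 e^{-|\omega|\tau}$, hence $o(\|\Phi\|_{\mc{X}})$ in $\mc{X}$; this is the Fréchet differentiability. For (iii), I control $[D_1\mb{K}_2(\Psi_1,\mb{u}_1)-D_1\mb{K}_2(\Psi_2,\mb{u}_2)]\Phi$ by replacing the integrand by $[D\mb{N}(\Psi_1(\tau'))-D\mb{N}(\Psi_2(\tau'))]\Phi(\tau')$ and using the estimate in Lemma \ref{lem:N} a third time, giving a bound $\lesssim \|\Psi_1-\Psi_2\|_{\mc{X}}\|\Phi\|_{\mc{X}}$ with a constant continuous in $\|\Psi_1\|_{\mc{X}},\|\Psi_2\|_{\mc{X}}$.

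The main obstacle here is purely organisational: ensuring that every appearance of the continuous functions $\gamma_1,\gamma_2$ from Lemma \ref{lem:N} gets absorbed into a locally bounded constant depending on $\|\Psi\|_{\mc{X}}$ (respectively on $\|\Psi_i\|_{\mc{X}}$), so that the integrals in the $P$- and $(1-P)$-parts of $D_1\mb{K}_2$ converge uniformly in the expected $e^{-|\omega|\tau}$ fashion. Everything else is a mechanical repetition of the estimates already carried out in the proof of Lemma \ref{lem:KtoX}.
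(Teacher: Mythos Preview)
Your proposal is correct and follows essentially the same approach as the paper: the $P/(1-P)$ splitting with $S(\tau)P=e^\tau P$, the fundamental theorem of calculus for the remainder, repeated use of the quantitative estimates from Lemma~\ref{lem:N} with the $\gamma_2$-constants absorbed via continuity on compacts, and the final appeal to \cite{zeidler}, Proposition~4.14. The only cosmetic difference is that you explicitly separate out step (i) (boundedness of the candidate operator), whereas the paper proceeds directly to the Fr\'echet remainder estimate and the continuity of $D_1\mb{K}_2$.
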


\begin{proof}
 $\mb{K}_2(\Psi,\mb{u})$ does not depend on $\mb{u}$ so we have $D_2 \mb{K}_2(\Psi,\mb{u})=\mb{0}$.
 We set 
 \begin{align*}
 [\tilde{D}_1\mb{K}_2(\Psi, \mb{u})\Phi](\tau):=&-\int_0^\infty e^{\tau-\tau'}PD\mb{N}(\Psi(\tau'))\Phi(\tau')d\tau' \\
&+\int_0^\tau S(\tau-\tau')D\mb{N}(\Psi(\tau'))\Phi(\tau')d\tau' 
\end{align*}
and claim that $\tilde{D}_1\mb{K}_2=D_1 \mb{K}_2$.
In order to prove this, we have to show that
$$ \frac{\left \|\mb{K}_2(\Psi+\Phi,\mb{u})-\mb{K}_2(\Psi, \mb{u})-\tilde{D}_1 \mb{K}_2(\Psi, \mb{u})\Phi \right \|_{\mc{X}}}{\|\Phi\|_{\mc{X}}} \to 0 $$
as $\|\Phi\|_\mc{X}\to 0$.
We have
$$ P\mb{K}_2(\Psi,\mb{u})(\tau)=-\int_\tau^\infty e^{\tau-\tau'}P\mb{N}(\Psi(\tau'))d\tau' $$
and, analogously,
$$ P[\tilde{D}_1 \mb{K}_2(\Psi,\mb{u})\Phi](\tau)=-\int_\tau^\infty e^{\tau-\tau'}PD\mb{N}(\Psi(\tau'))\Phi(\tau')d\tau' $$
for all $\tau \geq 0$.
Consequently, we obtain
\begin{align}
\label{eq:proofK2}
 &\frac{1}{\|\Phi\|_\mc{X}}\|P\mb{K}_2(\Psi+\Phi, \mb{u})(\tau)-P\mb{K}_2(\Psi)(\tau)-P[\tilde{D}_1\mb{K}_2(\Psi,\mb{u})\Phi](\tau)\| \\
 &\leq \frac{e^\tau}{\|\Phi\|_\mc{X}} \int_\tau^\infty e^{-\tau'}
\|\mb{N}(\Psi(\tau')+\Phi(\tau'))-\mb{N}(\Psi(\tau'))-D\mb{N}(\Psi(\tau'))\Phi(\tau')\|d\tau' \nonumber
\end{align}
for all $\tau \geq 0$.
An application of the fundamental theorem of calculus yields
\begin{align*} \|\mb{N}(\mb{u}+\mb{v})-\mb{N}(\mb{u})-D\mb{N}(\mb{u})\mb{v}\|&=
 \left \|\int_0^1 \left [D\mb{N}(\mb{u}+t\mb{v})\mb{v}-D\mb{N}(\mb{u})\mb{v} \right ] dt \right \|\\
 &\leq \|\mb{v}\|^2\int_0^1 t \gamma_2(\|\mb{u}+t\mb{v}\|,\|\mb{u}\|)dt
\end{align*}
where we have used Lemma \ref{lem:N}.
Thus, we obtain
\begin{align}
\label{eq:proofK22}
&\|\mb{N}(\Psi(\tau')+\Phi(\tau'))-\mb{N}(\Psi(\tau'))-D\mb{N}(\Psi(\tau'))\Phi(\tau')\| \\
&\leq 
\|\Phi(\tau')\|^2 \int_0^1 t \gamma_2(\|\Psi(\tau')+t\Phi(\tau')\|,\|\Phi(\tau')\|)dt \nonumber \\
&\leq \|\Phi(\tau')\|^2 \sup_{t \in [0,1]}\sup_{\tau'>0} \gamma_2(\|\Psi(\tau')+t\Phi(\tau')\|,\|\Phi(\tau')\|) \nonumber \\
&\leq C_\Psi\;\|\Phi(\tau')\|^2 \nonumber 
\end{align}
for all $\tau' \geq 0$ where $C_\Psi>0$ is a constant that depends on $\Psi$.
Since we are interested in the limit $\|\Phi\|_\mc{X} \to 0$, we can assume that $\|\Phi\|_\mc{X} \leq 1$.
Consequently, we have
$$ \|\Psi(\tau')+t\Phi(\tau')\|\leq \|\Psi\|_\mc{X}+1 $$
for all $t \in [0,1]$, $\tau' \geq 0$ and thus, we may choose
$$ C_\Psi:=\sup_{0 \leq x,y \leq \|\Psi\|_\mc{X}+1}\gamma_2(x,y) $$
which is a finite number (depending on $\Psi$) since $\gamma_2$ is continuous.
As a consequence, from Eq.~\eqref{eq:proofK2} above, we obtain
\begin{align}
\label{eq:proofK2p1}
 &\frac{1}{\|\Phi\|_\mc{X}}\|P\mb{K}_2(\Psi+\Phi, \mb{u})(\tau)-P\mb{K}_2(\Psi, \mb{u})(\tau)-P[\tilde{D}_1\mb{K}_2(\Psi,\mb{u})\Phi](\tau)\| \\
 &\leq C_\Psi \;e^\tau\; \frac{\sup_{\tau'>\tau}e^{|\omega|\tau'} \|\Phi(\tau')\|}{\|\Phi\|_\mc{X}} \int_\tau^\infty e^{-\tau'-|\omega|\tau'}e^{|\omega| \tau'}\|\Phi(\tau')\|d\tau' \nonumber \\
 &\lesssim C_\Psi\; e^{-|\omega|\tau}\|\Phi\|_X \nonumber
 \end{align}
 for all $\tau \geq 0$.
 Furthermore, we have 
 $$ (1-P)\mb{K}_2(\Psi,\mb{u})(\tau)=\int_0^\tau S(\tau-\tau')(1-P)\mb{N}(\Psi(\tau'))d\tau' $$
 and thus,
 \begin{align*}
 &\frac{1}{\|\Phi\|_\mc{X}}\left \|(1-P)\left [\mb{K}_2(\Psi+\Phi, \mb{u})(\tau)-\mb{K}_2(\Psi, \mb{u})(\tau)-[\tilde{D}_1\mb{K}_2(\Psi,\mb{u})\Phi](\tau) \right ] \right \| \\
 &\leq \frac{1}{\|\Phi\|_\mc{X}}\int_0^\tau \left \|S(\tau-\tau')(1-P) \left [
 \mb{N}(\Psi(\tau')+\Phi(\tau'))-\mb{N}(\Psi(\tau'))-D\mb{N}(\Psi(\tau'))\Phi(\tau') \right ] \right \| d\tau' \\
 &\lesssim \frac{1}{\|\Phi\|_\mc{X}}\int_0^\tau e^{-|\omega|(\tau-\tau')}\left \|
 \mb{N}(\Psi(\tau')+\Phi(\tau'))-\mb{N}(\Psi(\tau'))-D\mb{N}(\Psi(\tau'))\Phi(\tau') \right \|d\tau'
\end{align*}
by Theorem \ref{thm:linear} and, via \eqref{eq:proofK22}, this implies
\begin{align}
\label{eq:proofK2p2}
  &\frac{1}{\|\Phi\|_\mc{X}}\left \|(1-P)\left [\mb{K}_2(\Psi+\Phi, \mb{u})(\tau)-\mb{K}_2(\Psi, \mb{u})(\tau)-[\tilde{D}_1\mb{K}_2(\Psi,\mb{u})\Phi](\tau) \right ] \right \|  \\
  &\lesssim C_\Psi \frac{e^{-|\omega| \tau}}{\|\Phi\|_\mc{X}}\int_0^\tau e^{|\omega| \tau'}\|\Phi(\tau')\|^2 d\tau' 
  \lesssim C_\Psi \;e^{-|\omega| \tau}\;\|\Phi\|_\mc{X}
\int_0^\tau e^{-|\omega|\tau'}d\tau' \nonumber \\
&\lesssim C_\Psi\;e^{-|\omega| \tau}\;\|\Phi\|_\mc{X} \nonumber
\end{align}
for all $\tau \geq 0$.
Putting together the two pieces \eqref{eq:proofK2p1} and \eqref{eq:proofK2p2}, we infer the claim
$\tilde{D}_1 \mb{K}_2=D_1 \mb{K}_2$.

It remains to show that $D_1 \mb{K}_2: \mc{X} \times \mc{H} \to \mc{B}(\mc{X})$ is continuous.
We have
\begin{align*}
 &\left \| P[D_1 \mb{K}_2(\Psi, \mb{u})\Phi](\tau)-P[D_1 \mb{K}_2(\tilde{\Psi},\tilde{\mb{u}})\Phi](\tau) \right \| \\
 &\lesssim \int_\tau^\infty e^{\tau-\tau'}\left \|D\mb{N}(\Psi(\tau'))\Phi(\tau')-D\mb{N}(\tilde{\Psi}(\tau'))\Phi(\tau') \right \|d\tau' \\
 &\lesssim \int_\tau^\infty e^{\tau-\tau'}\|\Psi(\tau')-\tilde{\Psi}(\tau') \| \|\Phi(\tau')\| \gamma_2(\|\Psi(\tau')\|, \|\tilde{\Psi}(\tau')\|) d\tau' \\
 &\leq C_\Psi \|\Psi-\tilde{\Psi}\|_\mc{X}\|\Phi\|_\mc{X}\int_\tau^\infty e^{\tau-\tau'-2|\omega|\tau'}d\tau' \\
 &\lesssim C_\Psi \;e^{-2|\omega|\tau}\|\Psi-\tilde{\Psi}\|_\mc{X}\|\Phi\|_\mc{X}
\end{align*}
for all $\tau \geq 0$ by Lemma \ref{lem:N} with a $\Psi$--dependent constant $C_\Psi$.
Since we are interested in the limit $\tilde{\Psi} \to \Psi$, we may assume that $\|\Psi-\tilde{\Psi}\|_\mc{X}\leq 1$ which implies
$$ \|\tilde{\Psi}\|_\mc{X} \leq \|\tilde{\Psi}-\Psi\|_\mc{X}+\|\Psi\|_\mc{X}\leq \|\Psi\|_\mc{X}+1 $$
and therefore, we may choose
$$ C_\Psi:=\sup_{0 \leq x,y \leq \|\Psi\|_\mc{X}+1}\gamma_2(x,y)< \infty $$
as before.
Similarly, 
\begin{align*}
 &\left \|(1-P)\left [D_1 \mb{K}_2(\Psi, \mb{u})\Phi](\tau)-[D_1 \mb{K}_2(\tilde{\Psi},\tilde{\mb{u}})\Phi](\tau) \right ] \right \| \\
 &\lesssim \int_0^\tau \left \|S(\tau-\tau')(1-P)\left [D\mb{N}(\Psi(\tau'))\Phi(\tau')-D\mb{N}(\tilde{\Psi}(\tau'))\Phi(\tau')\right ] \right \|d\tau' \\
 &\lesssim \int_0^\tau e^{-|\omega| (\tau-\tau')}\|\Psi(\tau')-\tilde{\Psi}(\tau')\|\|\Phi(\tau')\|
\gamma_2(\|\Psi(\tau')\|, \|\tilde{\Psi}(\tau')\|)d\tau' \\
&\leq C_\Psi \|\Psi-\tilde{\Psi}\|_\mc{X}\|\Phi\|_\mc{X}
\int_0^\tau e^{-|\omega|\tau-|\omega| \tau'}d\tau' \\
&\leq C_\Psi\; e^{-|\omega|\tau} \|\Psi-\tilde{\Psi}\|_\mc{X}\|\Phi\|_\mc{X}
\end{align*}
for all $\tau \geq 0$ by Lemma \ref{lem:N} and Theorem \ref{thm:linear}.
Putting those two estimates together we obtain
$$ \left \|D_1 \mb{K}_2(\Psi, \mb{u})\Phi-D_1 \mb{K}_2(\tilde{\Psi},\tilde{\mb{u}})\Phi \right \|_\mc{X}
\lesssim C_\Psi\|\Psi-\tilde{\Psi}\|_\mc{X}\|\Phi\|_\mc{X} $$
for all $\Phi \in \mc{X}$ and thus,
$$ \left \|D_1 \mb{K}_2(\Psi, \mb{u})-D_1 \mb{K}_2(\tilde{\Psi},\tilde{\mb{u}}) \right \|_{\mc{B}(\mc{X})}
\lesssim C_\Psi\|\Psi-\tilde{\Psi}\|_\mc{X} $$
which implies the continuity of $D_1 \mb{K}_2: \mc{X} \times \mc{H} \to \mc{B}(\mc{X})$ at 
$(\Psi,\mb{u})$ and $(\Psi, \mb{u})$ was arbitrary.
The claim now follows from \cite{zeidler}, p.~140, Proposition 4.14.
\end{proof}

Now we can show the existence of a global (with respect to the time variable $\tau$) mild solution to the wave maps problem.

\begin{theorem}[Global existence of a mild solution]
\label{thm:global1}
 Let $\delta>0$ be sufficiently small and assume that $\psi^T$ is mode stable. Then, for any $\mb{u} \in \mc{H}$ with $\|\mb{u}\|< \delta$, 
there exists a $\Psi(\cdot; \mb{u}) \in \mc{X}$ such that $\Psi(\cdot; \mb{u})=\mb{K}(\Psi(\cdot;\mb{u}),\mb{u})$.
Moreover, the solution $\Psi(\cdot; \mb{u})$ is unique in a sufficiently small neighborhood of 
$\mb{0}$ in $\mc{X}$ and the mapping $\mb{u} \mapsto \Psi(\cdot;\mb{u}): \mc{H} \to \mc{X}$ is continuously Fr\'echet differentiable.
\end{theorem}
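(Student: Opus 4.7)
The plan is to apply the implicit function theorem on Banach spaces to the map
\[
 F: \mc{X} \times \mc{H} \to \mc{X}, \qquad F(\Psi,\mb{u}):=\Psi-\mb{K}(\Psi,\mb{u}),
\]
based at the reference point $(\Psi,\mb{u})=(\mb{0},\mb{0})$. First I would verify that $F(\mb{0},\mb{0})=\mb{0}$: this is immediate from Lemma \ref{lem:N}, which gives $\mb{N}(\mb{0})=\mb{0}$, so both integrals in \eqref{eq:defK} vanish when $\Psi\equiv\mb{0}$, and the remaining term $S(\tau)(1-P)\mb{u}$ vanishes for $\mb{u}=\mb{0}$.

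Next, continuous Fr\'echet differentiability of $F$ on $\mc{X}\times\mc{H}$ follows from Lemmas \ref{lem:K1} and \ref{lem:K2}, since $F=\mathrm{id}_\mc{X}-\mb{K}_1-\mb{K}_2$. The key computation is the partial derivative
\[
 D_1 F(\mb{0},\mb{0})=\mathrm{id}_\mc{X}-D_1\mb{K}_1(\mb{0},\mb{0})-D_1\mb{K}_2(\mb{0},\mb{0}).
\]
Now $\mb{K}_1$ is independent of $\Psi$, so $D_1\mb{K}_1\equiv\mb{0}$; and the explicit formula in Lemma \ref{lem:K2} expresses $[D_1\mb{K}_2(\Psi,\mb{u})\Phi](\tau)$ as an integral whose integrand involves $D\mb{N}(\Psi(\tau'))\Phi(\tau')$. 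Evaluating at $\Psi\equiv\mb{0}$ and using $D\mb{N}(\mb{0})=\mb{0}$ from Lemma \ref{lem:N} gives $D_1\mb{K}_2(\mb{0},\mb{0})=\mb{0}$. Hence $D_1 F(\mb{0},\mb{0})=\mathrm{id}_\mc{X}$, which is trivially a topological isomorphism of $\mc{X}$ onto itself.

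With the hypotheses of the implicit function theorem verified, I obtain $\delta>0$, a neighborhood $U\subset\mc{X}$ of $\mb{0}$, and a continuously Fr\'echet differentiable map
\[
 \mb{u}\mapsto \Psi(\,\cdot\,;\mb{u}):\{\mb{u}\in\mc{H}:\|\mb{u}\|<\delta\}\to U
\]
such that $\Psi(\,\cdot\,;\mb{u})$ is the unique element of $U$ with $F(\Psi(\,\cdot\,;\mb{u}),\mb{u})=\mb{0}$, i.e., the unique solution of $\Psi=\mb{K}(\Psi,\mb{u})$ in $U$, and $\Psi(\,\cdot\,;\mb{0})=\mb{0}$. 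This is precisely the statement of the theorem.

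I expect the proof to be short and essentially bookkeeping: all of the genuine analytic content, namely the mapping properties of $\mb{N}$ on the weighted space $\mc{X}$, the exponential decay estimates on $S(\tau)(1-P)$ from Theorem \ref{thm:linear}, and the computation of the Fr\'echet derivatives of $\mb{K}_1$ and $\mb{K}_2$, has already been packaged in the preceding lemmas. The one point that deserves emphasis, and on which the whole argument rests, is that the quadratic character of $\mb{N}$ near the origin (encoded in $D\mb{N}(\mb{0})=\mb{0}$) makes the linearization $D_1 F(\mb{0},\mb{0})$ the identity; without this cancellation one would have to invert $\mathrm{id}_\mc{X}-D_1\mb{K}_2(\mb{0},\mb{0})$ via a Neumann-series small-data argument, which the IFT avoids altogether.
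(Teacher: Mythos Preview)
Your proof is correct and follows exactly the paper's approach: define $\tilde{\mb{K}}(\Psi,\mb{u})=\Psi-\mb{K}(\Psi,\mb{u})$, use Lemmas \ref{lem:K1}, \ref{lem:K2}, and \ref{lem:N} to see that $D_1\tilde{\mb{K}}(\mb{0},\mb{0})$ is the identity on $\mc{X}$, and apply the implicit function theorem. Your write-up is in fact slightly more detailed than the paper's own proof, which is essentially a one-paragraph invocation of the same ingredients.
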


\begin{proof}
Define $\tilde{\mb{K}}: \mc{X} \times \mc{H} \to \mc{X}$ by $\tilde{\mb{K}}(\Psi,\mb{u}):=\Psi-\mb{K}(\Psi,\mb{u})$.
 Then we have $\tilde{\mb{K}}(\mb{0},\mb{0})=\mb{0}$.
According to Lemmas \ref{lem:K1} and \ref{lem:K2}, $\tilde{\mb{K}}$ is continuously Fr\'echet differentiable.
 By Lemma \ref{lem:N}, we have $D\mb{N}(\mb{0})=\mb{0}$ and thus, by Lemma \ref{lem:K2}, we infer
 $D_1 \mb{K}(\mb{0}, \mb{0})=\mb{0}$.
 Consequently, $D_1 \tilde{\mb{K}}(\mb{0},\mb{0})=\mb{1}$, the identity on $\mc{X}$.
 In particular, $D_1 \tilde{\mb{K}}(\mb{0},\mb{0})$ is an isomorphism and the implicit function theorem (see e.g., \cite{zeidler}, p.~150, Theorem 4.B) yields the existence of a $\Psi(\cdot; \mb{u}) \in \mc{X}$ such that $\tilde{\mb{K}}(\Psi(\cdot; \mb{u}),\mb{u})=\mb{0}$ for all $\mb{u} \in \mc{H}$ in a sufficiently small neighborhood of the origin.
 Furthermore, $\mb{u} \mapsto \Psi(\cdot; \mb{u}): \mc{H} \to \mc{X}$ is a $C^1$--map in the sense of Fr\'echet.
\end{proof}

\subsection{Global existence for arbitrary small data}
\label{sec:globalarbitrary}
Theorem \ref{thm:global1} provides us with a global mild solution for the wave maps problem.
However, we are not able to specify the initial data freely.
Instead, the chosen initial data get modified along the one--dimensional subspace spanned by the gauge mode. This is necessary in order to suppress the instability introduced by the time translation symmetry of the original problem. 
This instability shows up because we have in fact fixed the blow up time.
However, perturbing the initial data of $\psi^T$ (i.e., choosing arbitrary $(f,g)$) does not, in general, preserve the blow up time of the solution. 
Therefore, one might hope to eliminate the shortcomings of Theorem \ref{thm:global1} by allowing for a variable blow up time.
This issue will be pursued in the current section.
In fact, there is one situation where the specified data are not modified, namely when they are chosen to be identically zero.
Then, the corresponding global solution from Theorem \ref{thm:global1} is the zero solution.
Loosely speaking, we are going to use the implicit function theorem to extend this to a neighborhood of zero.

Recall that we intend to solve the system \eqref{eq:main1stcss} and thus, 
the initial data we want to prescribe are of the form
$$ \Psi(0)(\rho)=\left ( \begin{array}{c}
 T\rho^2\left [g(T\rho)-\psi^T_t(0,T\rho) \right ] \\
T\rho \left [f'(T\rho)-\psi^T_r(0,T\rho) \right ] +2 \left [f(T\rho)-\psi^T(0,T\rho)
\right ] 
                         \end{array} \right ) $$ 
where $(f,g)$ are free functions.
This rather complicated looking expression is a consequence of the various variable transformations we have performed.
In what follows it is convenient to rewrite this in a different form.
First, we set
\begin{equation} 
\label{eq:defv}
\mb{v}(\rho):=\left ( \begin{array}{c}
   \rho^2\left [g(\rho)-\psi^1_t(0,\rho) \right ] \\
\rho \left [f'(\rho)-\psi^1_r(0,\rho) \right ] +2 \left [f(\rho)-\psi^1(0,\rho)
\right ] 
                         \end{array} \right ). 
\end{equation}                        
Note carefully that $\mb{v}$ does not depend on the blow up time $T$.
Thus, varying $\mb{v}$ is equivalent to varying $(f,g)$ and therefore, $\mb{v}$ are the free data of the problem.
The dependence on the blow up time $T$ is encoded in the operator $\mb{U}$, formally defined by
\begin{equation*} 
\mb{U}(\mb{v},T)(\rho):=\left ( \begin{array}{c}
   \frac{1}{T}v_1(T\rho)+T\rho^2 \left  [\psi_t^1(0,T\rho)-\psi_t^T(0,T\rho) \right ] \\
   v_2(T\rho)+T\rho \left [\psi_r^1(0,T\rho)-\psi_r^T(0,T\rho) \right ]+2 \left [\psi^1(0,T\rho)-\psi^T(0,T\rho) \right ] 
                             \end{array} \right ). 
\end{equation*}
This can be written in a simpler form by recalling that $\psi^T(t,r)=f_0(\frac{r}{T-t})$ and thus, 
\begin{eqnarray*}
 \psi^T(0,T\rho)=f_0(\rho) & \psi^1(0,T\rho)=f_0(T\rho) \\
 \psi_t^T(0,T\rho)=\frac{\rho}{T} f_0'(\rho) & \psi_t^1(0,T\rho)=T\rho f_0'(T\rho) \\
 \psi_r^T(0,T\rho)=\frac{1}{T}f_0'(\rho) & \psi_r^1(0,T\rho)=f_0'(T\rho).
\end{eqnarray*}
Consequently, we obtain
\begin{equation} 
\label{eq:defU}
\mb{U}(\mb{v},T)(\rho)=\left ( \begin{array}{c}
   \frac{1}{T} \left [ v_1(T\rho)+T^3\rho^3 f_0'(T\rho) \right ] -\rho^3 f_0'(\rho) \\
   v_2(T\rho)+T\rho f_0'(T\rho)+2 f_0(T\rho)-\rho f_0'(\rho)-2 f_0(\rho) 
                             \end{array} \right ). 
\end{equation}
With the above correspondence between $\mb{v}$ and $(f,g)$, we obviously have
$$ \Psi(0)=\mb{U}(\mb{v},T) $$
for the initial data.
The advantage of this new notation is that the dependencies of the data on $(f,g)$ (or, equivalently, $\mb{v}$) on the one hand, and $T$ on the other, are clearly separated now.
In order to obtain a well--posed initial value problem in the lightcone $\mc{C}_T$, the data $(f,g)$ have to be specified on the interval $[0,T]$.
This, however, introduces the technical problem that the data space depends on $T$.
In order to fix this, we restrict $T$ to have values in the interval $I:=(\frac{1}{2},\frac{3}{2})$.
This is no real restriction since our existence argument will be perturbative around $T=1$ anyway.
We need to find a space $\mc{Y}$ such that $\mb{U}$ has nice properties as a map from $\mc{Y} \times I$ to $\mc{H}$.
To this end we set
$$ \tilde{Y}_1:=\left \{u \in C^2[0,\tfrac{3}{2}]: u(0)=u'(0)=0 \right \}, \quad 
\tilde{Y}_2:=\left \{u \in C^2[0,\tfrac{3}{2}]: u(0)=0 \right \} $$
and define two norms $\|\cdot\|_{Y_1}$, $\|\cdot\|_{Y_2}$ on $\tilde{Y}_1$, $\tilde{Y}_2$, respectively, by setting
$$ \|u\|_{Y_1}^2:=\int_0^{3/2} |u''(\rho)|^2 d\rho, \quad 
 \|u\|_{Y_2}^2:=\int_0^{3/2} |u'(\rho)|^2 d\rho+\int_0^{3/2} |u''(\rho)|^2 \rho^2 d\rho. $$
Thanks to the boundary conditions for functions in $\tilde{Y}_j$, $j=1,2$, the mappings $\|\cdot\|_{Y_j}: \tilde{Y}_j \to [0,\infty)$ are really norms (not only seminorms). 
We denote by $Y_j$ the completion of $(\tilde{Y}_j, \|\cdot\|_{Y_j})$.
Furthermore, we set $\mc{Y}:=Y_1 \times Y_2$ with the canonical norm
$$ \|\mb{u}\|_\mc{Y}^2:=\|u_1\|_{Y_1}^2+\|u_2\|_{Y_2}^2. $$
By construction, $Y_1$, $Y_2$ and $\mc{Y}$ are Banach spaces.
For notational convenience we also set $\tilde{\mc{Y}}:=\tilde{Y}_1 \times \tilde{Y}_2$.
Then $\mc{Y}$ is the completion of $(\tilde{\mc{Y}},\|\cdot\|_{\mc{Y}})$ and thus, the notation is consistent.
Recall that, similarly, the Hilbert space $\mc{H}$ emerged as the completion of 
$$ \tilde{\mc{H}}:=\{u \in C^2[0,1]: u(0)=u'(0)=0\} \times \{u \in C^1[0,1]: u(0)=0\} $$
with respect to the norm 
$$ \|\mb{u}\|^2:=\int_0^1 \frac{|u_1'(\rho)|^2}{\rho^2}d\rho
+\int_0^1 |u_2'(\rho)|^2d\rho, $$
see Sec.~\ref{sec:oplin} or \cite{DSA}.
Here, we also write $\mc{H}=:H_1 \times H_2$ and $\tilde{\mc{H}}=:\tilde{H}_1 \times \tilde{H}_2$ with the obvious definitions of $H_j$, $\tilde{H}_j$, $j=1,2$.
We need a few preparing technical lemmas.

\begin{lemma}
 \label{lem:Fvcont}
 For $(v_j, T) \in \tilde{Y}_j \times I$, $j=1,2$, define
$[F_j(v_j,T)](\rho):=v_j(T\rho)$.
 Then $F_j(v_j, T) \in H_j$ and we have the estimate
 $$ \|F_j(v_j, T)-F_j(\tilde{v}_j, T)\|_j \lesssim \|v_j - \tilde{v}_j\|_{Y_j} $$
 for all $v_j, \tilde{v}_j \in \tilde{Y}_j$ and all $T \in I$.
 As a consequence, $F_j$ extends to a continuous mapping from $Y_j \times I$ to $H_j$.
\end{lemma}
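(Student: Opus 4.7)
The plan is to verify the claimed Lipschitz bound directly on the dense subset $\tilde{Y}_j$, exploiting linearity and a change of variables, and then invoke Lemma \ref{lem:context} (or, what amounts to the same thing, the bounded linear transformation theorem) to obtain the continuous extension.

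First I would check that the mapping actually lands in the right space. For $v_j \in \tilde{Y}_j$ and $T \in I = (\tfrac12,\tfrac32)$, the function $\rho \mapsto v_j(T\rho)$ is $C^2[0,1]$ (because $T\rho \leq \tfrac32$ for $\rho \in [0,1]$), vanishes at $\rho=0$, and in the case $j=1$ its derivative $T v_1'(T\rho)$ also vanishes at $\rho=0$. Thus $F_j(v_j,T) \in \tilde{H}_j \subset H_j$, and the map is linear in $v_j$, so the claimed estimate reduces to showing $\|F_j(v_j,T)\|_j \lesssim \|v_j\|_{Y_j}$ with a constant uniform in $T \in I$.

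The case $j=2$ is straightforward: the change of variables $\sigma = T\rho$ yields
\[
\|F_2(v_2,T)\|_2^2 = T^2 \int_0^1 |v_2'(T\rho)|^2\,d\rho = T \int_0^T |v_2'(\sigma)|^2\,d\sigma \leq \tfrac32 \|v_2\|_{Y_2}^2 .
\]
The case $j=1$ is the main point. After the same substitution one obtains
\[
\|F_1(v_1,T)\|_1^2 = T^3 \int_0^T \frac{|v_1'(\sigma)|^2}{\sigma^2}\,d\sigma \lesssim \int_0^{3/2} \frac{|v_1'(\sigma)|^2}{\sigma^2}\,d\sigma,
\]
and this is exactly where the boundary condition $v_1'(0)=0$ becomes crucial: applying a Hardy-type estimate to $w := v_1'$ (integration by parts in $\int_0^{3/2} |w|^2/\sigma^2\,d\sigma$, using that $|w(\sigma)|^2/\sigma \to 0$ as $\sigma \to 0^+$ since $w(0)=0$ and $w \in C^1$, and discarding the nonpositive boundary term at $\sigma = 3/2$, followed by Cauchy--Schwarz) yields
\[
\int_0^{3/2} \frac{|v_1'(\sigma)|^2}{\sigma^2}\,d\sigma \leq 4 \int_0^{3/2} |v_1''(\sigma)|^2\,d\sigma = 4\|v_1\|_{Y_1}^2.
\]
The proof of this estimate is essentially identical to the proof of Hardy's inequality given earlier, with the only difference that the boundary term at the endpoint is now $-|w(3/2)|^2/(3/2)$ instead of $-|w(1)|^2$, which is still $\leq 0$ and can be dropped. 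This is the one step where one has to be slightly careful, since Hardy's inequality as stated is for $C_c^\infty(0,1]$ functions.

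Finally, the extension to $(v_j,T) \in Y_j \times I$ is obtained as follows. For each fixed $T$, the uniform Lipschitz bound together with the density of $\tilde{Y}_j$ in $Y_j$ allows us to apply Lemma \ref{lem:context} (with $\alpha=1$ and $\gamma \equiv C$ independent of the arguments) to extend $F_j(\cdot,T)$ to a continuous map $Y_j \to H_j$ satisfying the same estimate. Joint continuity in $(v_j,T)$ follows by combining the uniform-in-$T$ Lipschitz bound in $v_j$ with continuity in $T$ for fixed $v_j \in \tilde{Y}_j$, the latter being verified by expanding $T v_j'(T\rho) - T' v_j'(T'\rho)$ and using the mean value theorem together with $v_1'(0)=0$ (in the case $j=1$) to control the factor $1/\rho$ pointwise; dominated convergence then gives $\|F_j(v_j,T) - F_j(v_j,T')\|_j \to 0$ as $T' \to T$, and the general case follows by density.
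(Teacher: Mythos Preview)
Your proof is correct and follows essentially the same approach as the paper's: verify membership in $\tilde{H}_j$, establish the uniform-in-$T$ Lipschitz bound via Hardy's inequality (using $v_1'(0)=0$) and a change of variables, then extend by density and prove continuity in $T$ by approximation. The only cosmetic difference is that the paper applies Hardy on $[0,1]$ \emph{before} the substitution $\sigma = T\rho$, whereas you substitute first and apply Hardy on $[0,\tfrac32]$; your remark that the stated Hardy inequality is for $C_c^\infty(0,1]$ and needs a small adaptation is a valid point that the paper leaves implicit.
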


\begin{proof}
 Let $v_j \in \tilde{Y}_j$.
 Since $\rho \in [0,1]$ implies $T\rho \in [0,T] \subset [0,\frac{3}{2}]$ for $T \in I$, we see that 
$\rho \mapsto v_j(T\rho)$ defines a function in $C^2[0,1]$.
 To be more precise, this function is given by 
$v_j(T\cdot)|_{[0,1]} \in C^2[0,1]$, the restriction of $v_j(T\cdot)$ to the interval $[0,1]$.
Consequently, $F_j(v_j,T) \in C^2[0,1]$, $j=1,2$.
Furthermore, the boundary conditions $v_1(0)=v_1'(0)=v_2(0)=0$ imply that
$$ [F_1(v_1,T)](0)=[F_1(v_1,T)]'(0)=[F_2(v_2,T)]'(0)=0 $$
which shows that $F_j(v_j, T) \in \tilde{H}_j \subset H_j$.
By definition, we have
$$ [F_j(v_j,T)]'(\rho)=Tv_j'(T\rho) $$
and this implies
\begin{align*}
 \|F_1(v_1,T)-F_1(\tilde{v}_1,T)\|_1^2&=T^2 \int_0^1 
\frac{|v_1'(T\rho)-\tilde{v}_1'(T\rho)|^2}{\rho^2}d\rho \\
&\lesssim T^4 \int_0^1 |v_1''(T\rho)-\tilde{v}_1''(T\rho)|^2 d\rho \\
&=T^3 \int_0^T |v_1''(\rho)-\tilde{v}_1''(\rho)|^2 d\rho \\
&\lesssim \|v_1-\tilde{v}_1\|_{Y_1}
\end{align*}
for all $v_1, \tilde{v}_1$ in $\tilde{Y}_1$ and $T \in I$ by Hardy's inequality (recall that $v_1'(0)=0$).
Analogously, we have
\begin{align*}
 \|F_2(v_2,T)-F_2(\tilde{v}_2,T)\|_2^2&=T^2 \int_0^1 |v_2'(T\rho)-\tilde{v}_2'(T\rho)|^2 d\rho \\
 &\lesssim \|v_2-\tilde{v}_2\|_{Y_2}
\end{align*}
for all $v_2,\tilde{v}_2 \in \tilde{Y}_2$ and $T \in I$ which proves the claimed estimate.
Consequently, Lemma \ref{lem:context} shows that, for any $T \in I$, the mapping $F_j(\cdot,T)$ extends to a continuous function $F_j(\cdot,T): Y_j \to H_j$ \emph{and the continuity is uniform with respect to $T$}.
Thus, in order to show that $F_j: Y_j \times I \to H_j$ is continuous, it suffices to show continuity of $F(v_j, \cdot): I \to H_j$ for any fixed $v_j \in Y_j$.
To see this, note that
\begin{align}
\label{eq:proofFcont}
\|F_1(v_1,T)-F_1(v_1,\tilde{T})\|_1^2 &\lesssim \int_0^1 \left |T^2 v_1''(T\rho)-\tilde{T}^2 v_1''(\tilde{T}\rho) \right |^2 d\rho \\
&\lesssim \int_0^1 \left |T^2 v_1''(T\rho)-T^2 \tilde{v}_1''(T\rho) \right |^2 d\rho+
\int_0^1 \left |T^2 \tilde{v}_1''(T\rho)-\tilde{T}^2 \tilde{v}_1''(\tilde{T}\rho) \right |^2 d\rho \nonumber \\
&\quad +\int_0^1 \left |\tilde{T}^2 \tilde{v}_1''(\tilde{T}\rho)-\tilde{T}^2 v_1''(\tilde{T}\rho) \right |^2 d\rho \nonumber \\
&\lesssim \|v_1-\tilde{v}_1\|_{Y_1}^2
+\int_0^1 \left |T^2 \tilde{v}_1''(T\rho)-\tilde{T}^2 \tilde{v}_1''(\tilde{T}\rho) \right |^2 d\rho \nonumber
\end{align}
for all $v_1, \tilde{v}_1 \in Y_1$ and $T, \tilde{T} \in I$.
Now let $\varepsilon>0$ be arbitrary.
For any given $\delta>0$, we can find a $\tilde{v}_1 \in \tilde{Y}_1$ 
such that $\|v_1-\tilde{v}_1\|_{Y_1}<\delta$.
If $\delta>0$ is chosen small enough, \eqref{eq:proofFcont} implies
$$ \|F_1(v_1,T)-F_1(v_1,\tilde{T})\|_1\leq \tfrac{\varepsilon}{2}+C \left (\int_0^1 \left |T^2 \tilde{v}_1''(T\rho)-\tilde{T}^2 \tilde{v}_1''(\tilde{T}\rho) \right |^2 d\rho \right )^{1/2} $$
for an absolute constant $C>0$ 
and the integral goes to zero as $\tilde{T} \to T$ since $\tilde{v}_1'' \in C[0,\frac{3}{2}]$.
This shows that $F_1(v_1,\cdot): I \to H_1$ is continuous for any $v_1 \in Y_1$.
The proof for $F_2$ is completely analogous.
\end{proof}

From now on we may assume that $F_j$ is defined on all of $Y_j \times I$.
In the next lemma we show that $F_j$ has a continuous partial Fr\'echet derivative with respect to $T$.

\begin{lemma}
 \label{lem:Fvdiff}
 The mapping $F_j: Y_j \times I \to H_j$, $j=1,2$, from Lemma \ref{lem:Fvcont} is partially Fr\'echet differentiable with respect to the second variable.
 Moreover, $v_j \in Y_j$ implies that $\rho \mapsto \rho v_j'(T\rho) \in C[0,1]$ and
 the derivative 
$D_2 F_j(v_j,T): \mathbb{R} \to H_j$ at $(v_j,T)$ applied to $\lambda \in \mathbb{R}$ is given by
  $$ [D_2 F_j(v_j,T)\lambda](\rho)=\lambda \rho v_j'(T\rho). $$
Finally, $D_2 F_j: Y_j \times I \to \mc{B}(\mathbb{R},H_j)$ is continuous.
\end{lemma}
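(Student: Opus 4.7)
The plan is to first prove differentiability on the dense subspace $\tilde Y_j \times I$ by Taylor expansion, then extend to $Y_j \times I$ by Lemma \ref{lem:context} together with a uniform Lipschitz estimate in $v_j$. The candidate derivative $[\Lambda(v_j,T)\lambda](\rho) := \lambda \rho v_j'(T\rho)$ is classical for $v_j \in \tilde Y_j$; using the change of variables $s = T\rho$ as in Lemma \ref{lem:Fvcont}, together with Hardy's inequality for the $\rho^{-2}$-weighted piece (which requires $v_1'(0)=0$), one obtains
\[ \|\Lambda(v_j,T)\lambda\|_j \lesssim |\lambda|\,\|v_j\|_{Y_j} \]
uniformly for $T \in I$. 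By Lemma \ref{lem:context}, $\Lambda$ extends to $Y_j \times I$ with the same bound, and pointwise evaluation along an approximating sequence $v_j^{(n)} \to v_j$ in $\tilde Y_j$---whose first derivatives converge uniformly on $[0,\tfrac{3}{2}]$ by Cauchy--Schwarz and the boundary conditions---identifies the extended value with $\lambda \rho v_j'(T\rho)$ and simultaneously shows $\rho v_j'(T\rho) \in C[0,1]$ for every $v_j \in Y_j$.

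For smooth $v_j \in \tilde Y_j$ the Fréchet limit is verified directly. Setting $g_\lambda(\rho) := v_j((T+\lambda)\rho) - v_j(T\rho) - \lambda \rho v_j'(T\rho)$, differentiating in $\rho$, and using $\lambda T v_j''(T\rho) = T\int_T^{T+\lambda} v_j''(T\rho)\,d\tau$ produces
\[ g_\lambda'(\rho) = \rho \int_T^{T+\lambda} \bigl[T(v_j''(\tau\rho)-v_j''(T\rho)) + \lambda v_j''(\tau\rho)\bigr]\,d\tau. \]
Cauchy--Schwarz in $\tau$, Fubini, and the substitution $s=\tau\rho$ then yield
\[ \|g_\lambda\|_j^2 \lesssim |\lambda|^2\, \eta_j(\lambda)^2 + |\lambda|^4\,\|v_j\|_{Y_j}^2, \]
where $\eta_j(\lambda) := \sup_{\tau \in [T,T+|\lambda|]} \|v_j''(\tau\cdot) - v_j''(T\cdot)\|_{L^2(0,1)}$ for $j=1$ and the analogous $L^2(\rho^2 d\rho)$ expression for $j=2$; in either case $\eta_j(\lambda) \to 0$ as $\lambda \to 0$ by continuity of dilation in the corresponding weighted $L^2$ space. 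Hence $\|g_\lambda\|_j/|\lambda|\to 0$, so $D_2 F_j(v_j,T) = \Lambda(v_j,T)$ for all $v_j \in \tilde Y_j$.

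To extend to $v_j \in Y_j$, the triangle inequality together with Lemma \ref{lem:Fvcont} and the bound on $\Lambda$ gives $\|g_\lambda^{v_j} - g_\lambda^{\tilde v_j}\|_j \lesssim (1+|\lambda|)\,\|v_j - \tilde v_j\|_{Y_j}$ for $\tilde v_j \in \tilde Y_j$ and $|\lambda| \leq 1$. Given $\varepsilon > 0$, pick $\tilde v_j$ with $\|v_j - \tilde v_j\|_{Y_j} < \varepsilon$; then $\|g_\lambda^{v_j}\|_j/|\lambda| \leq \|g_\lambda^{\tilde v_j}\|_j/|\lambda| + C\varepsilon$, and letting $\lambda\to 0$ followed by $\varepsilon \to 0$ gives $D_2 F_j(v_j,T) = \Lambda(v_j,T)$ on all of $Y_j$. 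Finally, continuity of $D_2 F_j:Y_j\times I \to \mc B(\mathbb{R},H_j)$, under the identification $\mc B(\mathbb{R},H_j)\cong H_j$, reduces to continuity of $(v_j,T)\mapsto \rho v_j'(T\rho) \in H_j$: continuity in $v_j$ at fixed $T$ is the uniform bound on $\Lambda$, and continuity in $T$ at fixed $v_j \in Y_j$ follows by the same three-term density reduction that closes the proof of Lemma \ref{lem:Fvcont}. The main obstacle is the lack of a third classical derivative of $v_j \in Y_j$, which precludes a direct Taylor expansion around a general point and forces the density approach.
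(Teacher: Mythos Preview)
Your overall strategy matches the paper's: verify the derivative on the dense subspace $\tilde Y_j$, establish continuity of the candidate derivative, and pass to general $v_j \in Y_j$ by density. However, there is a genuine gap in your extension step.

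You claim that the triangle inequality together with Lemma~\ref{lem:Fvcont} and the bound on $\Lambda$ gives
\[
\|g_\lambda^{v_j} - g_\lambda^{\tilde v_j}\|_j \lesssim (1+|\lambda|)\,\|v_j - \tilde v_j\|_{Y_j},
\]
and then conclude $\|g_\lambda^{v_j}\|_j/|\lambda| \leq \|g_\lambda^{\tilde v_j}\|_j/|\lambda| + C\varepsilon$. But dividing the first inequality by $|\lambda|$ yields a right-hand side of order $\varepsilon/|\lambda|$, which diverges as $\lambda \to 0$. The naive triangle inequality loses the crucial factor of $|\lambda|$: Lemma~\ref{lem:Fvcont} only controls $\|F_j(v_j,T+\lambda) - F_j(\tilde v_j,T+\lambda)\|_j$ by $\|v_j - \tilde v_j\|_{Y_j}$, with no smallness in $\lambda$, and the same for the term at $T$.

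What is actually needed is the stronger estimate $\|g_\lambda^{v_j} - g_\lambda^{\tilde v_j}\|_j \lesssim |\lambda|\,\|v_j - \tilde v_j\|_{Y_j}$, uniformly for small $\lambda$. This does hold, and in fact follows from your own integral representation of $g_\lambda'$ applied to the difference $w = v_j - \tilde v_j$: the formula $g_\lambda'(\rho) = \rho\int_T^{T+\lambda}[\,\cdots\,]\,d\tau$ already carries a factor of $|\lambda|$ after Cauchy--Schwarz in $\tau$ and the change of variables $s=\tau\rho$. The paper handles this point by introducing auxiliary functions $\chi_j(v_j,w_j,\lambda,\mu)$ that encode the \emph{normalized} remainder $\|g_\lambda\|_j/|\lambda|$ directly, and then proves Lipschitz continuity of $\chi_j$ in its first two arguments uniformly in $(\lambda,\mu)$; the density argument is then carried out at the level of $\chi_j$ rather than $g_\lambda$. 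Either route closes the gap, but the argument as you have written it does not.
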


\begin{proof}
We proceed in three steps: First, we show that $F_j$ is partially Fr\'echet differentiable at any $(v_j,T) \in \tilde{Y}_j \times I$. Second, we prove that the derivative $D_2 F_j$ can be extended to a continuous function from $Y_j \times I$ to $\mc{B}(\mathbb{R},H_j)$. Finally, we show that the extended function is
the partial Fr\'echet derivative of $F_j$.

For $(v_j,T) \in \tilde{Y}_j \times I$ 
we set $[\tilde{D}_2F_j(v_j, T)\lambda](\rho):=\lambda \rho v_j'(T\rho)$ and
claim that $\tilde{D}_2 F_j(v_j,T)$ is the partial Fr\'echet derivative of $F_j$ at $(v_j,T)$.
In order to prove this, we have to show that
  $$ \frac{\|F_j(v_j, T+\lambda)-F_j(v_j, T)-\tilde{D}_2 F_j(v_j,T)\lambda\|_j}{|\lambda|} \to 0 $$
  as $|\lambda|\to 0$.
Choose $\lambda_0$ so small that $T\pm \lambda_0 \in I$.
Since we are interested in the limit $|\lambda| \to 0$, we assume in the following that $|\lambda| \in (0,\lambda_0]$.
By definition we have
$$ [F_j(v_j,T+\lambda)]'(\rho)=(T+\lambda)v_j'((T+\lambda)\rho) $$
and 
$$ [\tilde{D}_2 F_j(v_j,T)\lambda]'(\rho)=\lambda \left [T \rho v_j''(T\rho)+ v_j'(T\rho) \right ]. $$
Consequently, by the fundamental theorem of calculus, we obtain
\begin{align*}
&\left | [F_j(v_j,T+\lambda)]'(\rho)-[F_j(v_j,T)]'(\rho)-[\tilde{D}_2F_j(v_j, T)\lambda]'(\rho) \right | \\
&=\left |(T+\lambda)v_j'((T+\lambda)\rho)-Tv_j'(T\rho)-
\lambda \left [T \rho v_j''(T\rho)+ v_j'(T\rho) \right ] \right | \\
&=\left | \lambda \int_0^1 \left [(T+h\lambda)\rho v_j''((T+h\lambda)\rho)+v_j'((T+h\lambda)\rho) \right ] dh
 -\lambda \left [T \rho v_j''(T\rho)+ v_j'(T\rho) \right ] \right | \\
&\leq |\lambda| \left [ \rho \left |\int_0^1 \left [(T+h\lambda) v_j''((T+h\lambda)\rho)-T v_j''(T\rho) \right ]dh \right |
+\left |\int_0^1 \left [v_j'((T+h\lambda)\rho)-v_j'(T\rho) \right ]dh \right | \right ]
\end{align*}
and this shows
\begin{align}
\label{eq:proofFdiff1}
 &\frac{\|F_1(v_1, T+\lambda)-F_1(v_1, T)-\tilde{D}_2 F_1(v_1,T)\lambda\|_1^2}{|\lambda|^2} \\
 &\leq \int_0^1 \left |
\int_0^1 \left [(T+h\lambda) v_1''((T+h\lambda)\rho)-T v_1''(T\rho) \right ]dh \right |^2 d\rho 
\nonumber \\
 &\quad + \int_0^1 \frac{1}{\rho^2} \left |
\int_0^1 \left [v_1'((T+h\lambda)\rho)-v_1'(T\rho) \right ] dh \right |^2 d\rho \nonumber \\
&\lesssim \int_0^1 
\int_0^1 \left |(T+h\lambda) v_1''((T+h\lambda)\rho)-T v_1''(T\rho) \right |^2dh  d\rho \nonumber
\end{align}
by Cauchy--Schwarz and Hardy's inequality.
Here, the differentiation (with respect to $\rho$) under the integral sign is justified by the fact that $v_1' \in C^1[0,\frac{3}{2}]$.
We set
$$ \chi_1(v_1,w_1,\lambda,\mu):=\left (
\int_0^1 \int_0^1 \left |(T+h\lambda) v_1''((T+h\lambda)\rho)-(T+h\mu) w_1''((T+h\mu)\rho) \right |^2dh  d\rho \right )^{1/2} $$
for $v_1,w_1 \in \tilde{Y}_1$ and $\lambda,\mu \in [-\lambda_0,\lambda_0]$.
Since $v_1'' \in C[0,\frac{3}{2}]$, we have $\chi_1(v_1,v_1,\lambda,\mu)\to 0$ as $\lambda \to \mu$.
Similarly, we have
\begin{align}
\label{eq:proofFdiff2}
 &\frac{\|F_2(v_2, T+\lambda)-F_2(v_2, T)-\tilde{D}_2 F_2(v_2,T)\lambda\|_2^2}{|\lambda|^2} \\
 &\lesssim \int_0^1 \int_0^1 \left |(T+h\lambda) \rho v_2''((T+h\lambda)\rho)-T \rho v_2''(T\rho) \right |^2 dh d\rho \nonumber
 \\
 &\quad + \int_0^1 \int_0^1  \left |v_2'((T+h\lambda)\rho)-v_2'(T\rho) \right |^2 dh d\rho \to 0 \nonumber
\end{align}
as $|\lambda| \to 0$ and we also define
\begin{align*} 
\chi_2(v_2,w_2,\lambda,\mu):=&\left ( \int_0^1 \int_0^1 \left |(T+h\lambda) \rho v_2''((T+h\lambda)\rho)
-(T+h\mu) \rho w_2''((T+h\mu)\rho) \right |^2 dh d\rho \right . \\
 &+ \left . \int_0^1 \int_0^1  
\left |v_2'((T+h\lambda)\rho)-w_2'((T+h\mu)\rho) \right |^2 dh d\rho \right )^{1/2}
\end{align*}
for $v_2,w_2 \in \tilde{Y}_2$.
The fact that $\chi_j(v_j,v_j,\lambda,0) \to 0$ as $|\lambda| \to 0$ shows that 
$F_j$ is partially Fr\'echet differentiable with respect to the second variable at $(v_j,T) \in \tilde{Y}_j \times I$ and we have 
$D_2 F_j(v_j,T)=\tilde{D}_2F_j(v_j,T)$ as claimed.

Now we turn to the second step, the continuity of the derivative. By definition, 
for all $v_j, \tilde{v}_j \in \tilde{Y}_j$, $T, \tilde{T} \in I$ and $\lambda \in \mathbb{R}$, we have
\begin{align*}
 &\|D_2 F_1(v_1,T)\lambda-D_2 F_1(\tilde{v}_1,\tilde{T})\lambda\|_1^2 \\
 &\lesssim |\lambda|^2 \left [ \int_0^1 \left |Tv_1''(T\rho)
-\tilde{T}\tilde{v}_1''(\tilde{T}\rho) \right |^2 d\rho+
 \int_0^1 \frac{1}{\rho^2}\left | v_1'(T\rho)-\tilde{v}_1'(\tilde{T}\rho) \right |^2 d\rho \right ] \\
 &\lesssim |\lambda|^2 \int_0^1 \left |T v_1''(T\rho)-\tilde{T} \tilde{v}_1''(\tilde{T}\rho) \right |^2 d\rho 
 \end{align*}
 and therefore,
 $\|D_2 F_1(v_1,T)\lambda-D_2 F_1(\tilde{v}_1,T)\lambda\|_1^2 
\lesssim |\lambda|^2 \|v_1-\tilde{v}_1\|_{Y_1}^2$.
Analogously,
\begin{align*}
 &\|D_2 F_2(v_2,T)\lambda-D_2 F_2(\tilde{v}_2,\tilde{T})\lambda\|_2^2 \\
  &\lesssim |\lambda|^2 \left [ \int_0^1 \left |Tv_2''(T\rho)-\tilde{T}\tilde{v}_2''(\tilde{T}\rho) \right |^2 \rho^2 d\rho+
 \int_0^1 \left | v_2'(T\rho)-\tilde{v}_2'(\tilde{T}\rho) \right |^2 d\rho \right ],
 \end{align*}
 and thus, $\|D_2 F_2(v_2,T)\lambda-D_2 F_2(\tilde{v}_2,\tilde{T})\lambda\|_2^2
\lesssim |\lambda|^2 \|v_2-\tilde{v}_2\|_{Y_2}^2$.
Consequently, we obtain
$$ \|D_2 F_j(v_j,T)-D_2 F_j(\tilde{v}_j,T)\|_{\mc{B}(\mathbb{R},H_j)}\lesssim \|v_j-\tilde{v}_j\|_{Y_j} $$
for all $v_j, \tilde{v}_j \in \tilde{Y}_j$ and $T \in I$.
This estimate implies that, for any fixed $T \in I$, $D_2 F_j(\cdot,T)$ 
can be extended to a continuous map $D_2 F_j(\cdot,T): Y_j \to \mc{B}(\mathbb{R},H_j)$ (use Lemma \ref{lem:context} and recall that $\mc{B}(\mathbb{R},H_j)$ is a Banach space)
\emph{and the continuity is uniform with respect to $T \in I$}.
Now let $v_1 \in Y_1$ and $\varepsilon>0$ be arbitrary and choose a $\tilde{v}_1 \in \tilde{Y}_1$ with $\|v_1-\tilde{v}_1\|_{Y_1}\leq \varepsilon$.
Then we have
\begin{align*}
 &\|D_2 F_1(v_1,T)-D_2 F_1(v_1,\tilde{T})\|_{\mc{B}(\mathbb{R},H_1)}^2 \lesssim \int_0^1 \left |T v_1''(T\rho)-\tilde{T} v_1''(\tilde{T}\rho) \right |^2 d\rho \\
 &\lesssim \int_0^1 \left |T v_1''(T\rho)-T \tilde{v}_1''(T\rho) \right |^2 d\rho
 +\int_0^1 \left |T \tilde{v}_1''(T\rho)-\tilde{T} \tilde{v}_1''(\tilde{T}\rho) \right |^2 d\rho \\
 &\quad +\int_0^1 \left |\tilde{T} \tilde{v}_1''(\tilde{T}\rho)
-\tilde{T} v_1''(\tilde{T}\rho) \right |^2 d\rho \\
&\lesssim \|v_1-\tilde{v}_1\|_{Y_1}^2
+\int_0^1 \left |T \tilde{v}_1''(T\rho)-\tilde{T} \tilde{v}_1''(\tilde{T}\rho) \right |^2 d\rho \lesssim \varepsilon
\end{align*}
provided that $|T-\tilde{T}|$ is sufficiently small.
Here we use that $\tilde{v}_1'' \in C[0,\frac{3}{2}]$.
An analogous estimate holds for $D_2 F_2$ and we conclude that $D_2 F_j(v_j,\cdot): I \to \mc{B}(\mathbb{R},H_j)$ is continuous for any $v_j \in Y_j$.
As a consequence, we see that
$D_2 F_j: Y_j \times I \to \mc{B}(\mathbb{R},H_j)$ is continuous (recall that the continuity of $D_2 F_j(\cdot,T): Y_j \to \mc{B}(\mathbb{R},H_j)$ is uniform with respect to $T \in I$).

Finally, we want to justify the above notation, i.e., we want to show that $D_2 F_j$ is indeed the partial Fr\'echet derivative of $F_j$.
First, we prove that the estimates \eqref{eq:proofFdiff1} and \eqref{eq:proofFdiff2} are preserved by the extension.
To this end we show that, for fixed $\lambda, \mu \in [-\lambda_0,\lambda_0]$, 
$\chi_j$ can be extended to a continuous map 
$\chi_j(\cdot, \cdot, \lambda, \mu): Y_j \times Y_j  \to \mathbb{R}$.
As always, we intend to use Lemma \ref{lem:context}.
By applying Minkowski's inequality, we obtain
\begin{align*}
 &|\chi_1(v_1,w_1\lambda,\mu)-\chi_1(\tilde{v}_1,\tilde{w}_1,\lambda,\mu)| \\
 &=\left | \left (\int_0^1 \int_0^1 \left |(T+h\lambda) v_1''((T+h\lambda)\rho)
-(T+h\mu) w_1''((T+h\mu)\rho) \right |^2 dh d\rho \right )^{1/2} \right .\\
 &\quad - \left .\left ( \int_0^1 \int_0^1 \left |(T+h\lambda) \tilde{v}_1''((T+h\lambda)\rho)
-(T+h\mu) \tilde{w}_1''((T+h\mu)\rho) \right |^2 dh  d\rho \right )^{1/2} \right | \\
 &\leq \left (\int_0^1 \int_0^1 \left |(T+h\lambda) v_1''((T+h\lambda)\rho)
-(T+h\mu) w_1''((T+h\mu)\rho) \right . \right . \\
& \quad \left . \left .-(T+h\lambda) \tilde{v}_1''((T+h\lambda)\rho)+(T+h\mu) 
\tilde{w}_1''((T+h\mu)\rho) \right |^2 dh d\rho \right )^{1/2}
\end{align*}
and this implies
\begin{align*}
 &|\chi_1(v_1,w_1,\lambda,\mu)-\chi_1(\tilde{v}_1,\tilde{w}_1,\lambda,\mu)| \\
& \leq \left (
\int_0^1 \int_0^1 |T+h\lambda|^2 \left | v_1''((T+h\lambda)\rho)
-\tilde{v}_1''((T+h\lambda)\rho) \right |^2 dh d\rho \right )^{1/2} \\
&\quad +\left (
\int_0^1 \int_0^1 |T+h\mu|^2 \left | w_1''((T+h\mu)\rho)
-\tilde{w}_1''((T+h\mu)\rho) \right |^2 dh d\rho \right )^{1/2}
=:I_1^{1/2}+I_2^{1/2}.
\end{align*}
We apply Fubini's theorem (this is justified since we assume $v_j, \tilde{v}_j, w_j, \tilde{w}_j \in C^2[0,\frac{3}{2}]$) and a change of variables to obtain
\begin{align*}
 I_1&=\int_0^1 |T+h\lambda| \int_0^{T+h\lambda} \left | v_1''(\rho)
-\tilde{v}_1''(\rho) \right |^2 d\rho dh \lesssim \|v_1-\tilde{v}_1\|_{Y_1}^2
\end{align*}
and, analogously,
$I_2\lesssim \|w_1-\tilde{w}_1\|_{Y_1}^2$
for all $v_1, w_1, \tilde{v}_1, \tilde{w}_1 \in \tilde{Y}_1$.
Note carefully that the implicit constants in these estimates are independent of $\lambda, \mu$ (as long as $\lambda, \mu \in [-\lambda_0, \lambda_0]$ which we assume throughout). 
The function $\chi_2$ can be treated in a completely analogous way and putting everything together we arrive at
$$ |\chi_j(v_j,w_j,\lambda,\mu)-\chi_j(\tilde{v}_j,\tilde{w}_j,\lambda,\mu)|\lesssim 
\|(v_j,w_j)-(\tilde{v}_j,\tilde{w}_j)\|_{Y_j \times Y_j} $$
for all $(v_j,w_j), (\tilde{v}_j,\tilde{w}_j) \in \tilde{Y}_j \times \tilde{Y}_j$. Consequently, Lemma \ref{lem:context} shows that $\chi_j$ can be (uniquely) extended
to a continuous function $\chi_j(\cdot,\cdot,\lambda,\mu): Y_j \times Y_j \to \mathbb{R}$ and the estimates \eqref{eq:proofFdiff1}, \eqref{eq:proofFdiff2} remain valid for all $v_j \in Y_j$.
Furthermore, directly from the definition we have
\begin{align*} \chi_j(v_j,v_j,\lambda,0)&\lesssim \chi_j(v_j,\tilde{v}_j,\lambda,\lambda)
+\chi_j(\tilde{v}_j,\tilde{v}_j,\lambda,0)+\chi_j(\tilde{v}_j,v_j,0,0) \\
&\lesssim \|v_j-\tilde{v}_j\|_{Y_j}+\chi_j(\tilde{v}_j,\tilde{v}_j,\lambda,0)
\end{align*}
for all $v_j, \tilde{v}_j \in \tilde{Y}_j$ and $\lambda \in [-\lambda_0,\lambda_0]$ (cf.~the estimate for $I_1$ above). 
By continuity, this estimate extends to all $v_j, \tilde{v}_j \in Y_j$.
Now let $v_j \in Y_j$ and $\varepsilon>0$ be arbitrary.
Then, for any $\delta>0$, we can find an element $\tilde{v}_j \in \tilde{Y}_j$ such that
$\|v_j-\tilde{v}_j\|_{Y_j}< \delta$.
Consequently, we obtain
$$ \chi_j(v_j,v_j,\lambda,0)\leq \tfrac{1}{2}\varepsilon+\chi_j(\tilde{v}_j,\tilde{v}_j
,\lambda,0) < \varepsilon $$ provided that $\delta$ and $|\lambda|$ are sufficiently small since $\chi_j(\tilde{v}_j,\tilde{v}_j,\lambda,0)\to 0$ as $|\lambda|\to 0$.
This shows that $\chi_j(v_j,v_j,\lambda,0) \to 0$ as $|\lambda|\to 0$ and, in conjunction with the estimates \eqref{eq:proofFdiff1}, \eqref{eq:proofFdiff2}, this implies that the continuous extension $D_2 F_j$ of $\tilde{D}_2 F_j$ is indeed the partial Fr\'echet derivative of $F_j$ as suggested by the notation.
Finally, the continuous extension $D_2 F_j$ is explicitly given by
$$ D_2 F_j(v_{j},T)\lambda=\lim_{k \to \infty} \tilde{D}_2 F_j(v_{jk}, T)\lambda $$
for an arbitrary sequence $(v_{jk}) \subset \tilde{Y}_j$ with $\|v_j-v_{jk}\|_{Y_j} \to 0$ as $k \to \infty$ where the limit is taken in $H_j$.
Since convergence in $H_j$ implies pointwise convergence (see Lemma \ref{lem:Hcont}), the explicit expression
$$ D_2 F_j(v_j,T)\lambda=\lambda \rho v_j'(T\rho) $$
remains valid for all $v_j \in Y_j$. 
In particular, we see that $v_j \in Y_j$ implies that $\rho \mapsto \rho v_j'(T\rho)$ belongs to $C[0,1]$.
\end{proof}

With these preparations at hand, we go back to the mapping $\mb{U}$.
Note that the following result completely demystifies the role of the gauge mode.

\begin{lemma}
 \label{lem:U}
 The function $\mb{U}$ extends to a continuous mapping $\mb{U}: \mc{Y} \times I \to \mc{H}$.
 Furthermore, $\mb{U}$ is partially Fr\'echet differentiable with respect to the second variable, the derivative $D_2 \mb{U}: \mc{Y} \times I \to \mc{B}(\mathbb{R},\mc{H})$ is continuous and we have
 $$ D_2 \mb{U}(\mb{0},1)\lambda=2\lambda \mb{g} $$
 for all $\lambda \in \mathbb{R}$ where $\mb{g}$ is the gauge mode.
\end{lemma}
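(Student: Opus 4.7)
The strategy is to decompose $\mb{U}(\mb{v},T)$ into a piece linear in $\mb{v}$ (handled by Lemmas \ref{lem:Fvcont}--\ref{lem:Fvdiff}) and a piece depending only on $T$ built from the explicit profile $f_0$. Write $\mb{U}=\mb{U}^{(1)}+\mb{U}^{(2)}$ with
$$ \mb{U}^{(1)}(\mb{v},T):=\Bigl(\tfrac{1}{T}F_1(v_1,T),\,F_2(v_2,T)\Bigr), $$
$$ \mb{U}^{(2)}(T)(\rho):=\Bigl(T^2\rho^3 f_0'(T\rho)-\rho^3 f_0'(\rho),\;T\rho f_0'(T\rho)+2f_0(T\rho)-\rho f_0'(\rho)-2f_0(\rho)\Bigr). $$
Since $f_0(\rho)=2\arctan\rho$ is smooth and odd, the first component of $\mb{U}^{(2)}(T)$ vanishes to third order at the origin (hence sits in $\tilde H_1$) and the second vanishes to first order (hence sits in $\tilde H_2$), so $\mb{U}^{(2)}(T)\in\mc{H}$. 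Continuity $T\mapsto\mb{U}^{(2)}(T)$ into $\mc{H}$ follows by dominated convergence from the smoothness of $f_0$, and continuity of $\mb{U}^{(1)}$ follows from Lemma \ref{lem:Fvcont} combined with the smoothness of $T\mapsto 1/T$ on $I$. Hence $\mb{U}: \mc{Y}\times I \to \mc{H}$ is continuous.

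For partial differentiability in $T$, the product rule and Lemma \ref{lem:Fvdiff} give
$$ D_2 \mb{U}^{(1)}(\mb{v},T)\lambda=\lambda\Bigl(-T^{-2}v_1(T\rho)+T^{-1}\rho v_1'(T\rho),\,\rho v_2'(T\rho)\Bigr), $$
with continuity of $D_2\mb{U}^{(1)}:\mc{Y}\times I\to\mc{B}(\mathbb{R},\mc{H})$ inherited from Lemmas \ref{lem:Fvcont}--\ref{lem:Fvdiff}. For $\mb{U}^{(2)}$, the fundamental theorem argument from the proof of Lemma \ref{lem:Fvdiff} applies directly (with smooth $f_0$ in place of $v_j$, no density step required) and yields
$$ \partial_T\mb{U}^{(2)}(T)(\rho)=\Bigl(2T\rho^3 f_0'(T\rho)+T^2\rho^4 f_0''(T\rho),\,3\rho f_0'(T\rho)+T\rho^2 f_0''(T\rho)\Bigr), $$
with $T\mapsto\partial_T\mb{U}^{(2)}(T)$ continuous into $\mc{H}$ by another dominated convergence argument.

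Finally, specializing to $(\mb{v},T)=(\mb{0},1)$: since $v_j\equiv 0$ we have $v_1(T\rho)=v_j'(T\rho)=0$, so $D_2\mb{U}^{(1)}(\mb{0},1)\lambda=\mb{0}$ and only the $\mb{U}^{(2)}$ contribution survives. Inserting $f_0'(\rho)=2/(1+\rho^2)$ and $f_0''(\rho)=-4\rho/(1+\rho^2)^2$ gives
$$ 2\rho^3 f_0'(\rho)+\rho^4 f_0''(\rho)=\frac{4\rho^3(1+\rho^2)-4\rho^5}{(1+\rho^2)^2}=\frac{4\rho^3}{(1+\rho^2)^2}, $$
$$ 3\rho f_0'(\rho)+\rho^2 f_0''(\rho)=\frac{6\rho(1+\rho^2)-4\rho^3}{(1+\rho^2)^2}=\frac{2\rho(3+\rho^2)}{(1+\rho^2)^2}, $$
and comparison with the definition of $\mb{g}$ yields $D_2\mb{U}(\mb{0},1)\lambda=2\lambda\mb{g}$.

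The main obstacle here is purely organizational: $T$ enters in four different places (the outer $1/T$, the argument $T\rho$ of both $v_j$ and $f_0$, and the prefactors $T^2$, $T$), and one has to collect all these contributions and verify the resulting expressions sit in $\mc{H}$ with uniform-in-$T$ bounds. There is no new analytic difficulty beyond what was already proved in Lemmas \ref{lem:Fvcont} and \ref{lem:Fvdiff}; the conceptual content of the lemma lies entirely in the closing identification $\partial_T\mb{U}^{(2)}|_{T=1}=2\mb{g}$, which structurally encodes the fact that the unstable gauge direction is precisely the tangent vector at $T=1$ to the curve of initial data obtained by varying the blow up time.
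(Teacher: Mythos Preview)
Your proof is correct and follows essentially the same route as the paper: both reduce to Lemmas \ref{lem:Fvcont}--\ref{lem:Fvdiff} for the $\mb{v}$--dependent part and then compute $\partial_T\mb{U}|_{(\mb{0},1)}$ explicitly from $f_0'=2/(1+\rho^2)$, $f_0''=-4\rho/(1+\rho^2)^2$, arriving at $2\mb{g}$.

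The only organizational difference is in how the $f_0$--dependent piece $\mb{U}^{(2)}$ is handled. You treat it by hand, invoking smoothness of $f_0$ and dominated convergence to re-derive continuity and $T$--differentiability. The paper instead observes that these terms are themselves of the form $F_j(w_j,T)$ for \emph{fixed} elements $w_j$ of the data spaces --- namely $p^3 f_0'\in Y_1$ and $pf_0',\,f_0\in Y_2$ where $p(\rho)=\rho$ --- so that Lemmas \ref{lem:Fvcont}--\ref{lem:Fvdiff} apply uniformly to the entire expression with no separate argument needed. This buys a shorter proof and avoids duplicating the Fr\'echet--differentiability estimates; your direct treatment is equally valid but slightly redundant given the machinery already in place.
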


\begin{proof}
 By Eq.~\eqref{eq:defU}, we can write
$$ \mb{U}(\mb{v},T)=\left ( \begin{array}{c}
   \frac{1}{T}\left [ F_1(v_1, T)+F_1(p^3 f_0', T) \right ]-p^3 f_0' \\
   F_2(v_2, T)+F_2(p f_0', T)+2 F_2(f_0,T)-p f_0'-2 f_0
\end{array} \right ) $$
for all $\mb{v} \in \tilde{\mc{Y}}$ and $T \in I$ where $p(\rho):=\rho$.
Observe that $p^3 f_0' \in Y_1$ and $pf_0', f_0 \in Y_2$ since $f_0 \in C^\infty[0,\frac{3}{2}]$ and $f_0(0)=0$.
By Lemma \ref{lem:Fvcont}, $F_j$ uniquely extends to a continuous map $F_j: Y_j \times I \to H_j$, $j=1,2$, and this yields the unique continuous extension of $\mb{U}$ to $\mb{U}: \mc{Y} \times I \to \mc{H}$.
Furthermore, from Lemma \ref{lem:Fvdiff} it follows that $\mb{U}$ is partially Fr\'echet differentiable with respect to the second variable and $D_2 \mb{U}: \mc{Y} \times I \to \mc{B}(\mathbb{R},\mc{H})$ is continuous.
Finally, for any $v_j \in Y_j$, we have $F_j (v_j,1)=v_j$ as well as $[D_2 F_j(v_j,1)\lambda](\rho)=\lambda \rho v_j'(\rho)$ (Lemma \ref{lem:Fvdiff}) and this yields
$$ [D_2 \mb{U}(\mb{0},1)\lambda](\rho) =\lambda \left ( \begin{array}{c}
2\rho^3 f_0'(\rho)+\rho^4 f_0''(\rho) \\
3\rho f_0'(\rho)+\rho^2 f_0''(\rho)
                                \end{array} \right )
=\frac{2\lambda }{(1+\rho^2)^2} \left ( \begin{array}{c}
2 \rho^3 \\ \rho (3+\rho^2) \end{array} \right )=2\lambda \mb{g}(\rho)
 $$
 as claimed.
\end{proof}
Lemma \ref{lem:U} can be interpreted as follows. 
The mapping $T \mapsto \mb{U}(\mb{0},T)$ describes a curve in the initial data space $\mc{H}$ and $\mb{g}$ is (up to a factor $\frac{1}{2}$) the tangent vector to this curve at $\mb{U}(\mb{0},1)=\mb{0}$.

With these preparations at hand, we can return to the existence problem for the wave maps equation.
Recall that we want to construct a global solution of the integral equation
\begin{equation}
\label{eq:mainU}
\Psi(\tau)=S(\tau)\mb{U}(\mb{v},T)+\int_0^\tau S(\tau-\tau')\mb{N}(\Psi(\tau'))d\tau'
\end{equation}
for $\tau \geq 0$.
Theorem \ref{thm:global1} constitutes a preliminary step in this direction since it yields a solution $\Psi(\cdot; \mb{u}) \in \mc{X}$ to
\begin{align} 
\label{eq:mainUcorr}
\Psi(\tau; \mb{u})=&S(\tau)\mb{u}-e^\tau P \left [\mb{u}+\int_0^\infty e^{-\tau'}\mb{N}(\Psi(\tau'; \mb{u})d\tau' \right ] \\
&+\int_0^\tau S(\tau-\tau')\mb{N}(\Psi(\tau'; \mb{u}))d\tau', \quad \tau \geq 0  \nonumber
\end{align}
provided that $\|\mb{u}\|$ is sufficiently small, 
see the definition \eqref{eq:defK} of $\mb{K}$ and recall
that $S(\tau)P\mb{u}=e^\tau P\mb{u}$ (cf.~the proof of Lemma \ref{lem:KtoX}).
Moreover, the map $\mb{u} \mapsto \Psi(\cdot; \mb{u})$ is continuously Fr\'echet differentiable.
We call this map $\mb{E}$, i.e., for given $\mb{u} \in \mc{H}$ with $\|\mb{u}\|$ sufficiently small, we set $\mb{E}(\mb{u}):=\Psi(\cdot; \mb{u})$ where $\Psi(\cdot; \mb{u}) \in \mc{X}$ is the solution from Theorem \ref{thm:global1}.
$\mb{E}: \mc{U} \subset \mc{H} \to \mc{X}$ is well--defined on a sufficiently small open neighborhood $\mc{U}$ of $\mb{0}$ in $\mc{H}$ and, according to Theorem \ref{thm:global1}, $\mb{E}$ is continuously Fr\'echet differentiable on $\mc{U}$.
Note also that $\mb{E}(\mb{0})=\mb{0}$.
This follows from the fact that the solution constructed in Theorem \ref{thm:global1} is unique in a small neighborhood of $\mb{0}$ in $\mc{X}$ and obviously, $\mb{0} \in \mc{X}$ is a solution of Eq.~\eqref{eq:mainUcorr} if $\mb{u}=\mb{0}$ (recall that $\mb{N}(\mb{0})=\mb{0}$, see Lemma \ref{lem:N}).
Since $\mb{U}(\mb{0},1)=\mb{0}$ and $\mb{U}: \mc{Y} \times I \to \mc{H}$ is continuous (Lemma \ref{lem:U}), we see that $\mb{U}(\mb{v},T) \in \mc{U}$  provided that $(\mb{v}, T) \in \mc{V} \times J$ where $\mc{V}$ and $J$ are sufficiently small open neighborhoods of $\mb{0}$ in $\mc{Y}$ and $1$ in $I$, respectively.
Consequently, the mapping $\mb{E} \circ \mb{U}: \mc{V} \times J \subset \mc{Y} \times I \to \mc{X}$ is well--defined and, by Lemma \ref{lem:U} and the chain rule, $\mb{E} \circ \mb{U}$ is continuously partially Fr\'echet differentiable with respect to the second variable.
We define $\mb{F}: \mc{V} \times J \subset \mc{Y} \times I \to \langle \mb{g} \rangle$ by
$$ \mb{F}(\mb{v},T):=P \left [\mb{U}(\mb{v},T)+\int_0^\infty e^{-\tau'}\mb{N}(\mb{E}(\mb{U}(\mb{v},T))(\tau'))d\tau' \right ]. $$
Recall that $P \mc{H} = \langle \mb{g} \rangle$ (Theorem \ref{thm:linear}) and thus, $\mb{F}$ has 
indeed range in $\langle \mb{g} \rangle$.
Furthermore, observe that $F(\mb{0},1)=\mb{0}$.
For any $(\mb{v},T) \in \mc{V} \times J$, 
Theorem \ref{thm:global1} yields the existence of a solution to 
\begin{align*} 
\Psi(\tau)=&S(\tau)\mb{U}(\mb{v},T)-e^\tau \mb{F}(\mb{v},T) 
+\int_0^\tau S(\tau-\tau')\mb{N}(\Psi(\tau'))d\tau', \quad \tau \geq 0  \nonumber
\end{align*}
which is nothing but a reformulation of Eq.~\eqref{eq:mainUcorr} with $\mb{u}=\mb{U}(\mb{v},T)$.
Consequently, if we can show that, for any $\mb{v} \in \mc{V}$, we can find a $T \in J$ such that $F(\mb{v},T)=\mb{0}$, we obtain a solution to Eq.~\eqref{eq:mainU}.
This cries for an application of the implicit function theorem.

\begin{lemma}
 \label{lem:F0}
 Let $\mc{V} \subset \mc{Y}$ be a sufficiently small open neighborhood of $\mb{0}$.
Then, for any $\mb{v} \in \mc{V}$, there exists a $T \in J$ such that $\mb{F}(\mb{v},T)=\mb{0}$.
\end{lemma}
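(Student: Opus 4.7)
The plan is to solve $\mb{F}(\mb{v},T)=\mb{0}$ for $T$ near $1$ as a function of $\mb{v}$ near $\mb{0}$ by invoking the implicit function theorem, using $\langle \mb{g}\rangle \cong \mathbb{R}$ via $\lambda\mb{g}\leftrightarrow \lambda$. The setup is favorable: the discussion preceding the lemma records $\mb{F}(\mb{0},1)=\mb{0}$, while $\mb{F}$ is continuous on $\mc{V}\times J$ (composition of the continuous maps $\mb{U}$, $\mb{E}$ and the continuous linear operator $P$, together with the integral of a continuous $\mc{H}$--valued function) and continuously partially Fr\'echet differentiable in the second variable (since $\mb{E}\circ\mb{U}$ is, by Lemma \ref{lem:U}, the chain rule, and Theorem \ref{thm:global1}). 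This regularity is enough to run the Hildebrandt--Graves form of the implicit function theorem once we verify that $D_2\mb{F}(\mb{0},1)$ is an isomorphism.

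First I would compute $D_2\mb{F}(\mb{0},1)$. Formally the chain rule gives, for $\lambda\in\mathbb{R}$,
\begin{align*}
D_2\mb{F}(\mb{v},T)\lambda = P\Bigl[&D_2\mb{U}(\mb{v},T)\lambda \\
&+\int_0^\infty e^{-\tau'}D\mb{N}\bigl(\mb{E}(\mb{U}(\mb{v},T))(\tau')\bigr)\bigl[D\mb{E}(\mb{U}(\mb{v},T))D_2\mb{U}(\mb{v},T)\lambda\bigr](\tau')\,d\tau'\Bigr].
\end{align*}
Evaluating at $(\mb{v},T)=(\mb{0},1)$, one has $\mb{U}(\mb{0},1)=\mb{0}$ and $\mb{E}(\mb{0})=\mb{0}$ (the zero curve is the unique small solution of Eq.~\eqref{eq:mainUcorr} with $\mb{u}=\mb{0}$, by Theorem \ref{thm:global1} and $\mb{N}(\mb{0})=\mb{0}$). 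Since $D\mb{N}(\mb{0})=\mb{0}$ by Lemma \ref{lem:N}, the entire integrand vanishes identically, so the integral term drops out. Combining this with Lemma \ref{lem:U} and with $P\mb{g}=\mb{g}$ (recall $\mathrm{rg}\,P=\langle\mb{g}\rangle$) yields
\begin{equation*}
D_2\mb{F}(\mb{0},1)\lambda = P\bigl[D_2\mb{U}(\mb{0},1)\lambda\bigr] = P[2\lambda\mb{g}] = 2\lambda\mb{g}.
\end{equation*}

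Thus $D_2\mb{F}(\mb{0},1):\mathbb{R}\to\langle\mb{g}\rangle$ is the isomorphism $\lambda\mapsto 2\lambda\mb{g}$. The implicit function theorem (see e.g.\ \cite{zeidler}, p.~150, Theorem 4.B, applied to the scalar variable $T$) then produces, after possibly shrinking $\mc{V}$ and $J$, a continuous map $\mb{v}\mapsto T(\mb{v}):\mc{V}\to J$ with $T(\mb{0})=1$ and $\mb{F}(\mb{v},T(\mb{v}))=\mb{0}$ for every $\mb{v}\in\mc{V}$, which is exactly the claim.

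The main technical point to be careful about is the justification that $\mb{F}$ has enough regularity for an implicit function theorem to apply: we only have partial Fr\'echet differentiability in $T$, so one needs the version of the theorem that requires only continuity of $\mb{F}$ together with continuous partial differentiability in the solved variable. The conceptually substantive step is the vanishing of the integral contribution to $D_2\mb{F}(\mb{0},1)$, which depends crucially on $D\mb{N}(\mb{0})=\mb{0}$ from Lemma \ref{lem:N}; without this, one would have to argue that the remaining, more complicated, derivative is still an isomorphism onto $\langle\mb{g}\rangle$. The beautiful geometric content of the computation is the identity $D_2\mb{U}(\mb{0},1)\lambda=2\lambda\mb{g}$: the tangent vector at $T=1$ to the curve $T\mapsto\mb{U}(\mb{0},T)\approx\psi^T[0]$ of initial data parametrized by blow--up time is (a multiple of) the gauge mode, so varying $T$ precisely compensates the one--dimensional instability coming from the time translation symmetry.
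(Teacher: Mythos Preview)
Your proof is correct and follows essentially the same route as the paper: verify $\mb{F}(\mb{0},1)=\mb{0}$, compute $D_2\mb{F}(\mb{0},1)\lambda=2\lambda\mb{g}$ via the chain rule using $\mb{E}(\mb{0})=\mb{0}$ and $D\mb{N}(\mb{0})=\mb{0}$, and invoke the implicit function theorem. The one place where the paper is more explicit is in justifying the chain rule for the integral term: it factors $\mb{F}(\mb{v},T)=P[\mb{U}(\mb{v},T)+\mb{B}\tilde{\mb{N}}(\mb{E}(\mb{U}(\mb{v},T)))]$ with $\mb{B}\Psi:=\int_0^\infty e^{-\tau'}\Psi(\tau')\,d\tau'$ (linear, bounded $\mc{X}\to\mc{H}$) and $\tilde{\mb{N}}(\Psi)(\tau):=\mb{N}(\Psi(\tau))$, and then spends most of the proof showing that $\tilde{\mb{N}}:\mc{X}\to\mc{X}$ is continuously Fr\'echet differentiable with $[D\tilde{\mb{N}}(\Psi)\Phi](\tau)=D\mb{N}(\Psi(\tau))\Phi(\tau)$; this is exactly what makes your ``formal'' derivative rigorous, and is the step your write--up gestures at but does not carry out.
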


\begin{proof}
 Note first that the mapping $\mb{B}: \Psi \mapsto \int_0^\infty e^{-\tau'}\Psi(\tau')d\tau': \mc{X} \to \mc{H}$ is continuously Fr\'echet differentiable.
 This follows immediately from the fact that $\mb{B}$ is linear and, by
 $$
  \|\mb{B}\Psi\|=\left \|\int_0^\infty e^{-\tau'}\Psi(\tau')d\tau' \right \|\leq \int_0^\infty e^{-\tau'}\|\Psi(\tau')\|d\tau' \leq \sup_{\tau' > 0}\|\Psi(\tau')\|\leq \|\Psi\|_\mc{X},
 $$
also bounded.
Now we define $\tilde{\mb{N}}: \mc{X} \to \mc{X}$ by $\tilde{\mb{N}}(\Psi)(\tau):=\mb{N}(\Psi(\tau))$.
By definition, $\mb{F}$ can be written as
$$ \mb{F}(\mb{v},T)=P \left [\mb{U}(\mb{v},T)+\mb{B}\tilde{\mb{N}}(\mb{E}(\mb{U}(\mb{v},T))) \right ]. $$
We claim that $\tilde{\mb{N}}$ is continuously Fr\'echet differentiable.
To show this, we define a mapping $\tilde{D}\tilde{\mb{N}}: \mc{X} \to \mc{B}(\mc{X})$ by
 $$ [\tilde{D}\tilde{\mb{N}}(\Psi)\Phi](\tau):=D\mb{N}(\Psi(\tau))\Phi(\tau) $$
 for $\Psi,\Phi \in \mc{X}$ and $\tau \geq 0$.
 According to Lemma \ref{lem:N}, there exists a continuous function $\gamma: [0,\infty) \to [0,\infty)$ such that
 $$ \|[\tilde{D}\tilde{\mb{N}}(\Psi)\Phi](\tau)\|\leq \|\Psi(\tau)\|\|\Phi(\tau)\|\gamma(\|\Psi(\tau)\|) $$
 and this shows that $\tilde{D}\tilde{\mb{N}}$ is well--defined as a mapping $\mc{X} \to \mc{B}(\mc{X})$.
Invoking the fundamental theorem of calculus, we infer
\begin{align*}
 \tilde{\mb{N}}(\Psi+\Phi)(\tau)-\tilde{\mb{N}}(\Psi)(\tau)&=\mb{N}(\Psi(\tau)+\Phi(\tau))-\mb{N}(\Psi(\tau)) \\
 &=\int_0^1 D\mb{N}(\Psi(\tau)+h\Phi(\tau))\Phi(\tau)]dh \\
 &=\int_0^1 [\tilde{D}\tilde{\mb{N}}(\Psi+h\Phi)\Phi](\tau)dh
\end{align*}
and this implies
\begin{align*}
 &\|\tilde{\mb{N}}(\Psi+\Phi)(\tau)-\tilde{\mb{N}}(\Psi)(\tau)-[\tilde{D}\tilde{\mb{N}}(\Psi)\Phi](\tau)\| \\
 &\leq \int_0^1 \|[\tilde{D}\tilde{\mb{N}}(\Psi+h\Phi)\Phi](\tau)
-[\tilde{D}\tilde{\mb{N}}(\Psi)\Phi](\tau)\| dh \\
&\leq \int_0^1 h \|\Phi(\tau)\|^2 \gamma_2(\|\Psi(\tau)+h\Phi(\tau)\|, \|\Psi(\tau)\|) dh \\
&\leq C_\Psi \|\Phi\|_{\mc{X}}\|\Phi(\tau)\|
\end{align*}
for all $\Phi \in \mc{X}$ with $\|\Phi\|_\mc{X} \leq 1$ where we have used the estimate from Lemma \ref{lem:N}.
Consequently, we obtain
$$ \frac{\|\tilde{\mb{N}}(\Psi+\Phi)-\tilde{\mb{N}}(\Psi)-\tilde{D}\tilde{\mb{N}}(\Psi)\Phi\|_\mc{X}}{\|\Phi\|_\mc{X}}\leq C_\Psi \|\Phi\|_\mc{X} $$ for all $\Phi \in \mc{X}$ with $\|\Phi\|_\mc{X} \leq 1$ and this shows that $\tilde{D}\tilde{\mb{N}}$ is the Fr\'echet derivative of $\tilde{\mb{N}}$.
Another application of Lemma \ref{lem:N} yields
\begin{align*} 
\|D\tilde{\mb{N}}(\Psi)\Phi-D\tilde{\mb{N}}(\tilde{\Psi})\Phi\|_\mc{X}& \leq
\sup_{\tau>0}e^{|\omega|\tau}\left [\|\Psi(\tau)-\tilde{\Psi}(\tau)\|\|\Phi(\tau)\|\gamma_2(\|\Psi(\tau)\|,\tilde{\Psi}(\tau)\|) \right ]\\
&\leq C_\Psi \|\Psi-\tilde{\Psi}\|_\mc{X}\|\Phi\|_\mc{X}
\end{align*}
for all $\Phi \in \mc{X}$ and $\tilde{\Psi} \in \mc{X}$ with, say, $\|\Psi-\tilde{\Psi}\|\leq 1$.
This estimate shows that $D\tilde{\mb{N}}: \mc{X} \to \mc{B}(\mc{X})$ is continuous.
We conclude that $\mb{F}: \mc{V} \times J \to \langle \mb{g} \rangle$ is continuous and 
continuously Fr\'echet differentiable with respect to the second variable and by the chain rule and Lemma \ref{lem:U}, we obtain
\begin{align*} 
D_2 \mb{F}(\mb{0},1)\lambda=PD_2 \mb{U}(\mb{0},1)\lambda+\mb{B}\;\underbrace{D\tilde{\mb{N}}(\mb{E}(\mb{0}))}_{=\mb{0}}\;D\mb{E}(\mb{0})\;D_2 \mb{U}(\mb{0},1)\lambda=2\lambda \mb{g}
\end{align*}
since $\mb{U}(\mb{0},1)=\mb{0}$, $\mb{E}(\mb{0})=\mb{0}$ and $D \tilde{\mb{N}}(\mb{0})=\mb{0}$ by Lemma \ref{lem:N}.
This shows that $D_2 \mb{F}(\mb{0},1): \mathbb{R} \to \langle \mb{g} \rangle$ is an isomorphism and, since $\mb{F}(\mb{0},1)=\mb{0}$, the implicit function theorem (see e.g., \cite{zeidler}, p.~150, Theorem 4.B) yields the claim.
\end{proof}

We summarize the results of this section in a theorem.

\begin{theorem}[Global existence for the wave maps equation]
 \label{thm:global}
 Let $\varepsilon>0$ be arbitrary but small and 
assume that the fundamental self--similar solution $\psi^T$ is mode stable.
Set $\omega:=s_0+\varepsilon<0$ where $s_0$ is the spectral bound and
let $\mb{v} \in \mc{V} \subset \mc{Y}$ where $\mc{V}$ is a sufficiently small open neighborhood of $\mb{0}$ in $\mc{Y}$.
Then there exists a $T$ close to $1$ such that the equation
$$ \Phi(\tau)=S(\tau+\log T)\mb{U}(\mb{v},T)+\int_{-\log T}^\tau S(\tau-\tau')\mb{N}(\Phi(\tau'))d\tau', \quad \tau \geq -\log T $$
has a continuous solution $\Phi: [-\log T,\infty) \to \mc{H}$ satisfying 
$$ \|\Phi(\tau)\|\lesssim e^{-|\omega|\tau} $$
for all $\tau \geq -\log T$.
Consequently, $\Phi$ is a global mild solution of Eq.~\eqref{eq:maincssop} with initial data $\Phi(-\log T)=\mb{U}(\mb{v},T)$.
\end{theorem}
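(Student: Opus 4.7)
The plan is to combine Theorem~\ref{thm:global1} (the global existence of the modified mild solution $\Psi = \mb{E}(\mb{u})$) with Lemma~\ref{lem:F0} (the implicit-function-theorem statement that produces a $T$ killing the gauge correction $\mb{F}(\mb{v},T)$), and then to undo the time translation $\tau \mapsto \tau + \log T$ that was introduced to make $\mc{X}$ independent of $T$.

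First, shrink the neighbourhoods $\mc{V} \subset \mc{Y}$ and $J \subset I$ of $\mb{0}$ and $1$ so that, for every $(\mb{v},T) \in \mc{V} \times J$, both Lemma~\ref{lem:F0} applies and $\mb{U}(\mb{v},T)$ lies in the domain $\mc{U}$ of the solution operator $\mb{E}$; this is possible by continuity of $\mb{U}$ (Lemma~\ref{lem:U}) and $\mb{U}(\mb{0},1)=\mb{0}$. Given $\mb{v} \in \mc{V}$, use Lemma~\ref{lem:F0} to pick $T \in J$ close to $1$ with $\mb{F}(\mb{v},T)=\mb{0}$, and set $\Psi := \mb{E}(\mb{U}(\mb{v},T)) \in \mc{X}$. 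By construction, $\Psi$ satisfies $\Psi = \mb{K}(\Psi, \mb{U}(\mb{v},T))$, which, as noted in the discussion preceding Lemma~\ref{lem:F0}, is the same as
\begin{align*}
\Psi(\tau) &= S(\tau)\mb{U}(\mb{v},T) - e^{\tau}\,\mb{F}(\mb{v},T) \\
&\quad + \int_{0}^{\tau} S(\tau-\tau')\mb{N}(\Psi(\tau'))\,d\tau', \qquad \tau \geq 0.
\end{align*}
Since $\mb{F}(\mb{v},T) = \mb{0}$ for the chosen $T$, the correction term drops out and $\Psi$ actually solves the unmodified integral equation
\begin{equation*}
\Psi(\tau) = S(\tau)\mb{U}(\mb{v},T) + \int_{0}^{\tau} S(\tau-\tau')\mb{N}(\Psi(\tau'))\,d\tau', \qquad \tau \geq 0.
\end{equation*}

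Next, define $\Phi(\tau) := \Psi(\tau + \log T)$ for $\tau \geq -\log T$. Continuity of $\Phi$ is inherited from $\Psi \in C([0,\infty),\mc{H})$, and the substitution $\tau' \mapsto \tau' + \log T$ in the Riemann integral above transforms the previous display into the integral equation claimed in the theorem, with initial datum $\Phi(-\log T) = \Psi(0) = \mb{U}(\mb{v},T)$. For the decay, $\Psi \in \mc{X}$ gives $\|\Psi(\sigma)\| \leq \|\Psi\|_{\mc{X}} e^{-|\omega|\sigma}$ for all $\sigma \geq 0$, hence
\begin{equation*}
\|\Phi(\tau)\| = \|\Psi(\tau + \log T)\| \leq \|\Psi\|_{\mc{X}}\,T^{-|\omega|}\,e^{-|\omega|\tau} \lesssim e^{-|\omega|\tau}
\end{equation*}
uniformly for $T \in J$, since $J \subset I = (\tfrac12,\tfrac32)$ is bounded away from $0$ and $\infty$.

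The argument is essentially a bookkeeping assembly: all of the real analytic work is contained in Theorems~\ref{thm:linear} and~\ref{thm:global1} and Lemma~\ref{lem:F0}, so there is no serious new obstacle at this stage. The only point that requires a moment's care is the consistency between the two constructions — namely, that the function $\Psi$ appearing implicitly in the formula $\mb{F}(\mb{v},T) = P[\mb{U}(\mb{v},T) + \int_{0}^{\infty} e^{-\tau'}\mb{N}(\Psi(\tau'))\,d\tau']$ that defines $\mb{F}$ is exactly $\mb{E}(\mb{U}(\mb{v},T))$, so that setting $\mb{F}(\mb{v},T)=\mb{0}$ indeed eliminates the correction term in the equation satisfied by $\Psi$. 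This is automatic from the definition of $\mb{F}$ via $\mb{E}$ used in the proof of Lemma~\ref{lem:F0}.
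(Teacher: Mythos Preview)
Your proof is correct and follows exactly the approach of the paper: the paper presents Theorem~\ref{thm:global} as a summary of the section's results without a separate proof, and your argument spells out precisely the intended assembly of Theorem~\ref{thm:global1}, Lemma~\ref{lem:F0}, and the time translation $\Phi(\tau)=\Psi(\tau+\log T)$ discussed earlier in the section. If anything, your write-up is more explicit than the paper's, in particular the verification that the factor $T^{-|\omega|}$ is harmless for $T\in J$.
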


\subsection{Uniqueness of the solution}

Finally, we show that the solution of Theorem \ref{thm:global} is unique in the space $C([-\log T,\infty), \mc{H})$.
This is the last ingredient and it completes the proof of nonlinear stability of $\psi^T$.
The very last section \ref{sec:verylast} is nothing but the translation from the operator formulation in similarity coordinates back to the original equation \eqref{eq:main}. 
The proof of uniqueness in $C([-\log T, \infty),\mc{H})$ is necessary in order to rule out the paradoxical situation that, given initial data $(f,g)$, there exist two (or even more) solutions with the same data and some of them decay whereas others do not. 
If such a situation could occur, the whole question of stability of a solution would be meaningless.
Luckily, a standard argument can be applied to exclude this absurdity.
As before, it suffices to consider solutions of the translated problem Eq.~\eqref{eq:maincssopuni}.

\begin{lemma}
 \label{lem:unique}
 Let $\mb{u} \in \mc{H}$ and suppose $\Psi_j \in C([0,\infty),\mc{H})$, $j=1,2$, satisfies
 $$ \Psi_j(\tau)=S(\tau)\mb{u}+\int_0^\tau S(\tau-\tau')\mb{N}(\Psi_j(\tau'))d\tau' $$
 for all $\tau \geq 0$.
 Then $\Psi_1=\Psi_2$.
\end{lemma}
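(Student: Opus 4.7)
The plan is a standard Gronwall argument applied on arbitrary compact time intervals, leveraging the local Lipschitz property of $\mathbf{N}$ that comes for free from Lemma \ref{lem:N}.

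First I would fix an arbitrary $\tau_0>0$ and work on $[0,\tau_0]$. Since $\Psi_1,\Psi_2\in C([0,\infty),\mc{H})$, there exists $R>0$ with $\|\Psi_j(\tau)\|\leq R$ for all $\tau\in[0,\tau_0]$ and $j=1,2$. Using the fundamental theorem of calculus together with the bound $\|D\mb{N}(\mb{u})\mb{v}\|\lesssim\|\mb{u}\|\|\mb{v}\|\gamma_2(\|\mb{u}\|,0)$ obtained in the proof of Lemma \ref{lem:N} (or alternatively the explicit bound on $D\mb{N}(\mb{u})-D\mb{N}(\tilde{\mb{u}})$ together with $D\mb{N}(\mb{0})=\mb{0}$), one gets a local Lipschitz estimate
$$\|\mb{N}(\mb{u}_1)-\mb{N}(\mb{u}_2)\|\leq C_R\,\|\mb{u}_1-\mb{u}_2\|\qquad\text{whenever }\|\mb{u}_1\|,\|\mb{u}_2\|\leq R,$$
with a constant $C_R$ depending only on $R$. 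Combined with the uniform semigroup bound $\|S(\tau)\|\leq M$ for $\tau\in[0,\tau_0]$ (Proposition \ref{prop:gen}, since $\tau\mapsto e^{(-\frac12+\|L'\|)\tau}$ is bounded on $[0,\tau_0]$), this is all the input one needs.

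Next I would subtract the two integral equations. The $S(\tau)\mb{u}$ term drops out and we are left with
$$\Psi_1(\tau)-\Psi_2(\tau)=\int_0^\tau S(\tau-\tau')\bigl[\mb{N}(\Psi_1(\tau'))-\mb{N}(\Psi_2(\tau'))\bigr]\,d\tau'.$$
Taking norms and applying the two bounds just mentioned gives
$$\|\Psi_1(\tau)-\Psi_2(\tau)\|\leq MC_R\int_0^\tau\|\Psi_1(\tau')-\Psi_2(\tau')\|\,d\tau'$$
for all $\tau\in[0,\tau_0]$. Gronwall's inequality then forces $\|\Psi_1(\tau)-\Psi_2(\tau)\|=0$ throughout $[0,\tau_0]$. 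Since $\tau_0$ was arbitrary, $\Psi_1\equiv\Psi_2$ on $[0,\infty)$.

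There is no real obstacle here; the only point that requires a moment of care is extracting the local Lipschitz bound for $\mb{N}$ from the statements in Lemma \ref{lem:N} (which records only a difference-of-derivatives estimate), but this is immediate by writing $\mb{N}(\mb{u}_1)-\mb{N}(\mb{u}_2)=\int_0^1 D\mb{N}(\mb{u}_2+t(\mb{u}_1-\mb{u}_2))(\mb{u}_1-\mb{u}_2)\,dt$, using $D\mb{N}(\mb{0})=\mb{0}$ and the continuity of $D\mb{N}$ on the ball of radius $R$.
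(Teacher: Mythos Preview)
Your proposal is correct and follows essentially the same approach as the paper: fix a compact interval, extract a local Lipschitz bound for $\mb{N}$ from Lemma \ref{lem:N} via the fundamental theorem of calculus, subtract the two integral equations, and conclude. The only cosmetic difference is that you invoke Gronwall's inequality directly, whereas the paper instead picks a short initial interval on which the integral inequality becomes a contraction (yielding $\sup\leq\tfrac12\sup$) and then iterates; these are equivalent ways to finish from the same integral inequality.
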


\begin{proof}
 Recall first that, for all $\mb{u},\mb{v} \in \mc{H}$,
 \begin{align*} 
\|\mb{N}(\mb{u})-\mb{N}(\mb{v})\|&\leq 
\int_0^1 \|D\mb{N}(\mb{v}+t(\mb{u}-\mb{v}))(\mb{u}-\mb{v})\|dt  \\
&\leq \|\mb{u}-\mb{v}\|\int_0^1 \gamma(\|\mb{v}+t(\mb{u}-\mb{v})\|)dt
 \end{align*}
 where $\gamma: [0,\infty) \to [0,\infty)$ is a suitable continuous function, see Lemma \ref{lem:N}.
Now let $\tau_0>0$ be arbitrary.
Then we have
\begin{align*}
 \|\Psi_1(\tau)-\Psi_2(\tau)\|&\leq C\int_0^\tau e^{\tau-\tau'}\|\mb{N}(\Psi_1(\tau'))-\mb{N}(\Psi_2(\tau'))\|d\tau' \\
 &\leq C(e^\tau-1)\sup_{\tau' \in [0,\tau]} \left [\|\Psi_1(\tau')-\Psi_2(\tau')\|
  \sup_{t \in [0,1]} \gamma(\|\Psi_2(\tau')+t(\Psi_1(\tau')-\Psi_2(\tau'))\|) \right ] \\
 &\leq M(\tau_0, \Psi_1, \Psi_2)(e^\tau-1)\sup_{\tau' \in [0,\tau]}\|\Psi_1(\tau')-\Psi_2(\tau')\|
\end{align*}
where 
$$ M(\tau_0, \Psi_1, \Psi_2):=C\sup_{\tau' \in [0,\tau_0]}\sup_{t \in [0,1]} \gamma(\|\Psi_2(\tau')+t(\Psi_1(\tau')-\Psi_2(\tau'))\|) $$
is finite by the continuity of $\Psi_j$ and $\gamma$.
Consequently, we can find a $\tau_1 \in (0,\tau_0]$ such that
$$ \sup_{\tau \in [0,\tau_1]}\|\Psi_1(\tau)-\Psi_2(\tau)\|\leq \tfrac{1}{2}
\sup_{\tau \in [0,\tau_1]}\|\Psi_1(\tau)-\Psi_2(\tau)\| $$
which implies $\Psi_1(\tau)=\Psi_2(\tau)$ for all $\tau \in [0,\tau_1]$.
Iterating this argument we obtain $\Psi_1(\tau)=\Psi_2(\tau)$ for all $\tau \in [0,\tau_0]$ and $\tau_0>0$ was arbitrary.
\end{proof}

\subsection{Proof of Theorem \ref{thm:main}}
\label{sec:verylast}
Set
$$ v_1(\rho):=\rho^2 [g(\rho)-\psi_t^1(0, \rho)] $$
and
$$ v_2(\rho):=\rho [f'(\rho)-\psi_r^1(0,\rho)]+2[f(\rho)-\psi^1(0,\rho)], $$
cf.~Eq.~\eqref{eq:defv}.
Then $v_1, v_2 \in C^2[0,\frac{3}{2}]$ and 
$v_1(0)=v_1'(0)=v_2(0)=0$ which shows that $v_j \in \tilde{Y}_j \subset Y_j$, $j=1,2$. 
Furthermore, we have
\begin{align*}
 \|v_1\|_{Y_1}^2&=\int_0^{3/2}|v_1''(\rho)|^2 d\rho \\
 &=\int_0^{3/2}\left |r^2 [g''(r)-\psi^1_{trr}(0,r)]+4r [g'(r)-\psi_{tr}^1(0,r)]
 +2 [ g(r)-\psi_t^1(0,r)] \right |^2 dr
\end{align*}
as well as
\begin{align*}
 \|v_2\|_{Y_2}^2&=\int_0^{3/2}\left |v_2'(\rho) \right |^2 d\rho+\int_0^{3/2} \left |v_2''(\rho) \right|^2 
 \rho^2 d\rho \\
 &=\int_0^{3/2} \left |r [f''(r)-\psi_{rr}^1(0,r)]+3[f'(r)-\psi_r^1(0,r) \right |^2 dr \\
 &\quad + \int_0^{3/2} \left |r [f'''(r)-\psi_{rrr}^1(0,r)]+4[f''(r)-\psi_{rr}^1(0,r)] \right |^2 r^2 dr
\end{align*}
and this shows
$$ \|\mb{v}\|_{\mc{Y}}=\|(f,g)-(\psi^1(0,\cdot),\psi_t^1(0,\cdot))\|_{\mc{E}'}<\delta $$
by assumption.
Consequently, if $\delta$ is small enough, we obtain $\mb{v} \in \mc{V}$ and Theorem \ref{thm:global} yields the existence of a global mild solution $\Phi \in C([-\log T,\infty),\mc{H})$ of Eq.~\eqref{eq:maincssop} with initial data $\Phi(-\log T)=\mb{U}(\mb{v},T)$ that satisfies 
\begin{equation} 
\label{eq:estsol}
\|\Phi(\tau)\|\leq C_\varepsilon e^{-|\omega|\tau}  
\end{equation}
for all $\tau \geq -\log T$ where $T \in J$, i.e., $T>0$ is close to $1$.
Furthermore, by Lemma \ref{lem:unique}, the solution $\Phi$ is unique in the class $C([-\log T,\infty),\mc{H})$.
We conclude that 
$$ (\phi_1(\tau,\rho),\phi_2(\tau,\rho))=\Phi(\tau)(\rho) $$
is a solution of the Cauchy problem Eq.~\eqref{eq:main1stcss} and therefore,
by Eq.~\eqref{eq:origpsi}, 
\begin{equation} 
\label{eq:origpsi2}
\psi(t,r)=\psi^T(t,r)+\tfrac{1}{r^2}\int_0^r r' \phi_2\left (-\log(T-t), \tfrac{r'}{T-t} \right )dr' 
\end{equation}
is a solution of the original wave maps equation \eqref{eq:maincauchy}.
Furthermore, by Eq.~\eqref{eq:origpsit}, its time derivative is given by
\begin{equation}
\label{eq:origpsit2}
\psi_t(t,r)=\psi_t^T(t,r)+\tfrac{T-t}{r^2}\phi_1\left (-\log(T-t),\tfrac{r}{T-t} \right ).
\end{equation}
As before, we write $\varphi=\psi-\psi^T$.
Now note that 
$$ r\varphi_{rr}+3\varphi_r=\partial_r\tfrac{1}{r}\partial_r(r^2\varphi) $$
and thus,
$$ \varphi_{rr}(t,r)+3\varphi_r(t,r)=\tfrac{1}{T-t}\partial_\rho \phi_2 \left (-\log(T-t),\tfrac{r}{T-t}\right ) $$
by Eq.~\eqref{eq:origpsi2}.
Similarly, we have
$$ r\varphi_{tr}+2\varphi_t=\tfrac{1}{r}\partial_r (r^2 \varphi_t) $$
and therefore, by Eq.~\eqref{eq:origpsit2}, we obtain
$$ r\varphi_{tr}(t,r)+2\varphi_t(t,r)=\tfrac{1}{r}\partial_\rho \phi_1\left (-\log(T-t), \tfrac{r}{T-t} \right ). $$
By recalling the definition of $\|\cdot\|_{\mc{E}(R)}$, this shows that
\begin{align*} \|(\varphi(t,\cdot),\varphi_t(t,\cdot))\|_{\mc{E}(T-t)}^2&=\int_0^{T-t} \left |r\varphi_{rr}(t,r)+3\varphi_r(t,r) \right|^2 dr+\int_0^{T-t} \left | r\varphi_{tr}(t,r)+2\varphi_t(t,r) \right |^2 dr \\
&=\tfrac{1}{(T-t)^2}\int_0^{T-t} \left |\partial_\rho \phi_2 \left 
(-\log(T-t),\tfrac{r}{T-t}\right ) \right |^2 dr \\
&\quad +\int_0^{T-t} \left | \frac{\partial_\rho \phi_1\left (-\log(T-t), \tfrac{r}{T-t} \right )}{r} \right |^2 dr \\
&=\tfrac{1}{T-t} \int_0^1 \left |\partial_\rho \phi_2 \left 
(-\log(T-t),\rho \right ) \right |^2 d\rho \\
&\quad +\tfrac{1}{T-t}\int_0^1 \left | \frac{\partial_\rho \phi_1\left (-\log(T-t), \rho \right )}{\rho} \right |^2 d\rho \\
&=\tfrac{1}{T-t}\|\Phi(-\log(T-t))\|^2 \\
&\leq \frac{C_\varepsilon^2}{T-t}(T-t)^{2|\omega|}
\end{align*}
by Eq.~\eqref{eq:estsol}.
This proves the claimed estimate
$$ \|(\psi(t,\cdot),\psi_t(t,\cdot))-(\psi^T(t,\cdot),\psi_t^T(t,\cdot))\|_{\mc{E}(T-t)}\leq C_\varepsilon |T-t|^{-\frac{1}{2}+|\omega|} $$
and we are done.

\bibliography{wmnlin}{}

\begin{thebibliography}{10}

\bibitem{bizon00}
Piotr Bizo{\'n}.
\newblock Equivariant self-similar wave maps from {M}inkowski spacetime into
  3-sphere.
\newblock {\em Comm. Math. Phys.}, 215(1):45--56, 2000.

\bibitem{bizon05}
Piotr Bizo{\'n}.
\newblock An unusual eigenvalue problem.
\newblock {\em Acta Phys. Polon. B}, 36(1):5--15, 2005.

\bibitem{bizon99}
Piotr Bizo{\'n}, Tadeusz Chmaj, and Zbis{\l}aw Tabor.
\newblock Dispersion and collapse of wave maps.
\newblock {\em Nonlinearity}, 13(4):1411--1423, 2000.

\bibitem{carstea}
C.~Carstea.
\newblock A construction of blow up solutions for co-rotational wave maps.
\newblock {\em Preprint arXiv:0908.1201v1}, 2009.

\bibitem{cazenave}
Thierry Cazenave, Jalal Shatah, and A.~Shadi Tahvildar-Zadeh.
\newblock Harmonic maps of the hyperbolic space and development of
  singularities in wave maps and {Y}ang-{M}ills fields.
\newblock {\em Ann. Inst. H. Poincar\'e Phys. Th\'eor.}, 68(3):315--349, 1998.

\bibitem{Chr1}
Demetrios Christodoulou and A.~Shadi Tahvildar-Zadeh.
\newblock On the asymptotic behavior of spherically symmetric wave maps.
\newblock {\em Duke Math. J.}, 71(1):31--69, 1993.

\bibitem{Chr2}
Demetrios Christodoulou and A.~Shadi Tahvildar-Zadeh.
\newblock On the regularity of spherically symmetric wave maps.
\newblock {\em Comm. Pure Appl. Math.}, 46(7):1041--1091, 1993.

\bibitem{ichdipl}
Roland Donninger.
\newblock Perturbation analysis of self--similar solutions of the {SU(2)}
  sigma-model on {M}inkowski spacetime.
\newblock {\em Diploma thesis, University of Vienna}, 2006.

\bibitem{ichpca1}
Roland Donninger and Peter~C. Aichelburg.
\newblock On the mode stability of a self-similar wave map.
\newblock {\em J. Math. Phys.}, 49(4):043515, 9, 2008.

\bibitem{ichpca2}
Roland Donninger and Peter~C. Aichelburg.
\newblock Spectral properties and linear stability of self-similar wave maps.
\newblock {\em J. Hyperbolic Differ. Equ.}, 6(2):359--370, 2009.

\bibitem{ich0}
Roland Donninger and Peter~C. Aichelburg.
\newblock A note on the eigenvalues for equivariant maps of the {SU(2)}
  sigma-model.
\newblock {\em Appl.~Math.~Comp.~Sciences}, 1(1):73--82, 2010.

\bibitem{DSA}
Roland Donninger, Birgit Sch\"orkhuber, and Peter~C. Aichelburg.
\newblock On stable self--similar blow up for equivariant wave maps: {T}he
  linearized problem.
\newblock {\em Preprint}, 2010.

\bibitem{engel}
Klaus-Jochen Engel and Rainer Nagel.
\newblock {\em One-parameter semigroups for linear evolution equations}, volume
  194 of {\em Graduate Texts in Mathematics}.
\newblock Springer-Verlag, New York, 2000.
\newblock With contributions by S. Brendle, M. Campiti, T. Hahn, G. Metafune,
  G. Nickel, D. Pallara, C. Perazzoli, A. Rhandi, S. Romanelli and R.
  Schnaubelt.

\bibitem{struwe99}
Alexandre Freire, Stefan M{\"u}ller, and Michael Struwe.
\newblock Weak convergence of wave maps from {$(1+2)$}-dimensional {M}inkowski
  space to {R}iemannian manifolds.
\newblock {\em Invent. Math.}, 130(3):589--617, 1997.

\bibitem{keel-tao}
Markus Keel and Terence Tao.
\newblock Local and global well-posedness of wave maps on {$\mathbf{R^{1+1}}$}
  for rough data.
\newblock {\em Internat. Math. Res. Notices}, (21):1117--1156, 1998.

\bibitem{klainerman-rodnianski02}
Sergiu Klainerman and Igor Rodnianski.
\newblock On the global regularity of wave maps in the critical {S}obolev norm.
\newblock {\em Internat. Math. Res. Notices}, (13):655--677, 2001.

\bibitem{kriegersurv}
J.~Krieger.
\newblock Global regularity and singularity development for wave maps.
\newblock In {\em Surveys in differential geometry. {V}ol. {XII}. {G}eometric
  flows}, volume~12 of {\em Surv. Differ. Geom.}, pages 167--201. Int. Press,
  Somerville, MA, 2008.

\bibitem{schlag}
J.~Krieger and W.~Schlag.
\newblock On the focusing critical semi-linear wave equation.
\newblock {\em Amer. J. Math.}, 129(3):843--913, 2007.

\bibitem{krieger-schlag09}
J.~Krieger and W.~Schlag.
\newblock Concentration compactness for critical wave maps.
\newblock {\em Preprint arXiv:0908.2474v1}, 2009.

\bibitem{KST08}
J.~Krieger, W.~Schlag, and D.~Tataru.
\newblock Renormalization and blow up for charge one equivariant critical wave
  maps.
\newblock {\em Invent. Math.}, 171(3):543--615, 2008.

\bibitem{krieger03}
Joachim Krieger.
\newblock Global regularity of wave maps from {${\bf R}^{3+1}$} to surfaces.
\newblock {\em Comm. Math. Phys.}, 238(1-2):333--366, 2003.

\bibitem{krieger04}
Joachim Krieger.
\newblock Global regularity of wave maps from {$\mathbf{R^{2+1}}$} to {$H^2$}.
  {S}mall energy.
\newblock {\em Comm. Math. Phys.}, 250(3):507--580, 2004.

\bibitem{nahmod02}
Andrea Nahmod.
\newblock On global existence of wave maps with critical regularity.
\newblock In {\em Surveys in differential geometry, {V}ol.\ {VIII} ({B}oston,
  {MA}, 2002)}, Surv. Differ. Geom., VIII, pages 307--335. Int. Press,
  Somerville, MA, 2003.

\bibitem{nahmod03}
Andrea Nahmod, Atanas Stefanov, and Karen Uhlenbeck.
\newblock On the well-posedness of the wave map problem in high dimensions.
\newblock {\em Comm. Anal. Geom.}, 11(1):49--83, 2003.

\bibitem{rodnianski-raphael09}
I.~Rodnianski and P.~Rapha\"el.
\newblock Stable blow up dynamics for the critical co-rotational {W}ave {M}aps
  and equivariant {Y}ang-{M}ills problems.
\newblock {\em Preprint arXiv:0911.0692v1}, 2009.

\bibitem{rodnianski-sterbenz06}
I.~Rodnianski and J.~Sterbenz.
\newblock On the {F}ormation of {S}ingularities in the {C}ritical {O}(3)
  {S}igma-{M}odel.
\newblock {\em Preprint arXiv:math/0605023v3}, 2006.

\bibitem{shatah88}
Jalal Shatah.
\newblock Weak solutions and development of singularities of the {${\rm
  SU}(2)$} {$\sigma$}-model.
\newblock {\em Comm. Pure Appl. Math.}, 41(4):459--469, 1988.

\bibitem{struwe98}
Jalal Shatah and Michael Struwe.
\newblock {\em Geometric wave equations}, volume~2 of {\em Courant Lecture
  Notes in Mathematics}.
\newblock New York University Courant Institute of Mathematical Sciences, New
  York, 1998.

\bibitem{shatah-struwe02}
Jalal Shatah and Michael Struwe.
\newblock The {C}auchy problem for wave maps.
\newblock {\em Int. Math. Res. Not.}, (11):555--571, 2002.

\bibitem{STZ94}
Jalal Shatah and A.~Shadi Tahvildar-Zadeh.
\newblock On the {C}auchy problem for equivariant wave maps.
\newblock {\em Comm. Pure Appl. Math.}, 47(5):719--754, 1994.

\bibitem{sideris}
Thomas~C. Sideris.
\newblock Global existence of harmonic maps in {M}inkowski space.
\newblock {\em Comm. Pure Appl. Math.}, 42(1):1--13, 1989.

\bibitem{struwe01}
Michael Struwe.
\newblock Uniqueness for critical nonlinear wave equations and wave maps via
  the energy inequality.
\newblock {\em Comm. Pure Appl. Math.}, 52(9):1179--1188, 1999.

\bibitem{struwe04a}
Michael Struwe.
\newblock Radially symmetric wave maps from {$(1+2)$}-dimensional {M}inkowski
  space to the sphere.
\newblock {\em Math. Z.}, 242(3):407--414, 2002.

\bibitem{struwe03}
Michael Struwe.
\newblock Equivariant wave maps in two space dimensions.
\newblock {\em Comm. Pure Appl. Math.}, 56(7):815--823, 2003.
\newblock Dedicated to the memory of J{\"u}rgen K. Moser.

\bibitem{struwe04b}
Michael Struwe.
\newblock Radially symmetric wave maps from {$(1+2)$}-dimensional {M}inkowski
  space to general targets.
\newblock {\em Calc. Var. Partial Differential Equations}, 16(4):431--437,
  2003.

\bibitem{tao09}
T.~Tao.
\newblock Global regularity of wave maps {III}-{VII}.
\newblock {\em Preprints}, 2008-2009.

\bibitem{tao01a}
Terence Tao.
\newblock Global regularity of wave maps. {I}. {S}mall critical {S}obolev norm
  in high dimension.
\newblock {\em Internat. Math. Res. Notices}, (6):299--328, 2001.

\bibitem{tao01b}
Terence Tao.
\newblock Global regularity of wave maps. {II}. {S}mall energy in two
  dimensions.
\newblock {\em Comm. Math. Phys.}, 224(2):443--544, 2001.

\bibitem{tataru-sterbenz09a}
D.~Tataru and J.~Sterbenz.
\newblock Energy dispersed large data wave maps in 2+1 dimensions.
\newblock {\em Preprint arXiv:0906.3384}, 2009.

\bibitem{tataru-sterbenz09b}
D.~Tataru and J.~Sterbenz.
\newblock Regularity of wave-maps in dimension 2+1.
\newblock {\em Preprint arXiv:0907.3148}, 2009.

\bibitem{tataru99}
Daniel Tataru.
\newblock Local and global results for wave maps. {I}.
\newblock {\em Comm. Partial Differential Equations}, 23(9-10):1781--1793,
  1998.

\bibitem{tataru01}
Daniel Tataru.
\newblock On global existence and scattering for the wave maps equation.
\newblock {\em Amer. J. Math.}, 123(1):37--77, 2001.

\bibitem{tataru05}
Daniel Tataru.
\newblock Rough solutions for the wave maps equation.
\newblock {\em Amer. J. Math.}, 127(2):293--377, 2005.

\bibitem{TS90}
Neil Turok and David Spergel.
\newblock Global texture and the microwave background.
\newblock {\em Phys. Rev. Lett.}, 64(23):2736--2739, Jun 1990.

\bibitem{zeidler}
Eberhard Zeidler.
\newblock {\em Nonlinear functional analysis and its applications. {I}}.
\newblock Springer-Verlag, New York, 1986.
\newblock Fixed-point theorems, Translated from the German by Peter R. Wadsack.

\end{thebibliography}
\bibliographystyle{plain}

\end{document}